\def\lien{\mathrel{\mkern-4mu}}
\def\too{\relbar\lien\rightarrow}
\def\tooo{\relbar\lien\relbar\lien\too}
\newtheorem{theorem}{Th\'eor\`eme}[section]
\newtheorem{lemma}{Lemme}[section]
\newtheorem{proposition}{Proposition}[section]
\newtheorem{corollary}{Corollaire}[section]
\newtheorem{definition}{D\'efinition}[section]
\newtheorem{remark}{Remarque}[section]
\newtheorem{remarks}{Remarques}[section]
\newtheorem{conjecture}{Conjecture}[section]
\newtheorem{hypothesis}{Hypoth\`ese}[section]
\numberwithin{equation}{section}
\def\N{\mathbb{N}}
\def\Q{\mathbb{Q}}
\def\Z{\mathbb{Z}}
\def\plus{\displaystyle\mathop{\raise 1.0pt \hbox{$\bigoplus $}}\limits}
\def\prd{\displaystyle\mathop{\raise 2.0pt \hbox{$\prod$}}\limits}
\def\sm{\displaystyle\mathop{\raise 2.0pt \hbox{$\sum$}}\limits}
\def\lien{\mathrel{\mkern-4mu}}
\def\too{\relbar\lien\rightarrow}
\def\tooo{\relbar\lien\relbar\lien\too}
\def\toooo{\relbar\lien\relbar\lien\tooo}
\let\ds=\displaystyle
\let\wt=\widetilde
\let\ds=\displaystyle
\let\wt=\widetilde
\def\Cl{{\mathcal C}\hskip-2pt{\ell}}
\def\cl{c\hskip-1pt{\ell}}
\def\order{\raise1.5pt \hbox{${\scriptscriptstyle \#}$}}
\def\nmid{\  /\!\! \!\!\mid}
\begin{document}

\title{Approche $p$-adique \\ de la conjecture de Greenberg \\  (cas totalement r\'eel $p$-d\'ecompos\'e)}
\author{Georges Gras}

\address{Villa la Gardette, Chemin Ch\^ateau Gagni\`ere
\\ F--38520 Le Bourg d'Oisans, France -- \url{https://www.researchgate.net/profile/Georges_Gras}}
\email{g.mn.gras@wanadoo.fr}
\keywords{Greenberg's conjecture, Iwasawa's theory, $p$-class groups, class field theory,
Fermat quotients, $p$-adic regulators, Leopoldt's conjecture}
\subjclass{11R23, 11R29, 11R37; 11Y40}

\begin{abstract}
Soit $k$ un corps de nombres totalement r\'eel et soit $k_\infty$ sa $\Z_p$-extension
cyclotomique pour un premier $p>2$. Nous donnons (Th\'eor\`eme  \ref{thm2})
une condition suffisante de nullit\'e des invariants d'Iwasawa $\lambda, \mu$, 
lorsque $p$ est totalement d\'ecompos\'e dans $k$, et nous obtenons d'importantes tables
de corps quadratiques et $p$ pour lesquels on peut conclure que 
$\lambda = \mu=0$.
Nous montrons que le nombre de $p$-classes ambiges de $k_n$ ($n$-i\`eme \'etage
dans $k_\infty$) est \'egal \`a l'ordre du groupe de torsion ${\mathcal T}_k$ 
du groupe de Galois de la pro-$p$-extension Ab\'elienne $p$-ramifi\'ee maximale de $k$
(Th\'eor\`eme \ref{regulateur}), pour tout $n \geq e$, o\`u $p^e$ est l'exposant de 
$U_k^*/ \overline E_k$ (en termes d'unit\'es locales et globales).
Puis nous \'etablissons des analogues de la formule de Chevalley en utilisant 
une famille $(\Lambda_i^n)_{0\leq i \leq m_n}$ 
de sous-groupes de  $k^\times$ contenant $E_k$, dans lesquels tout $x$ 
est norme d'un id\'eal de $k_n$. Cette famille est attach\'ee \`a la filtration 
classique du $p$-groupe des classes de $k_n$ d\'efinissant
l'algorithme de calcul de son ordre en $m_n$ pas.
A partir de cela, nous montrons (Theorem \ref{O}) que 
$m_n \geq (\lambda \cdot n + \mu \cdot p^n + \nu)/{v_p(\order {\mathcal T}_k)}$ 
et que la condition $m_n = O(1)$ (i.e., $\lambda = \mu=0$) d\'epend essentiellement
des valuations ${\mathfrak p}$-adiques des $\frac{x^{p-1}-1}{p}$, 
$x \in \Lambda_i^n$, pour ${\mathfrak p} \mid p$, de sorte que la conjecture de 
Greenberg est fortement d\'ependante de ``quotients de Fermat'' dans $k^\times$. 
Des heuristiques et statistiques sur ces quotients de Fermat
(Sections \ref{quant}, \ref{nsuite}, \ref{BC}) montrent qu'ils suivent 
des lois de probabilit\'es naturelles, li\'ees \`a ${\mathcal T}_k$ quel que 
soit $n$, sugg\'erant que $\lambda = \mu=0$ (Heuristiques \ref{heur1}, 
\ref{heur2}, \ref{probas}).

\noindent
Ceci impliquerait que, pour une preuve de la conjecture de Greenberg, certains
r\'esultats $p$-adiques profonds (probablement inaccessibles actuellement),
ayant une certaine analogie avec la conjecture de Leopoldt,
sont n\'ecessaires avant toute r\'ef\'erence \`a la seule th\'eorie d'Iwasawa alg\'ebrique.
\end{abstract}

\begin{altabstract}
Let $k$ be a totally real number field ant let $k_\infty$ be its cyclotomic 
$\Z_p$-extension for a prime $p>2$. We give (Theorem \ref{thm2}) a 
sufficient condition of nullity of the Iwasawa invariants $\lambda, \mu$, 
when $p$ totally splits in $k$, and we obtain important tables of quadratic fields and 
$p$ for which we can conclude that $\lambda = \mu=0$.
We show that the number of ambiguous $p$-classes of $k_n$ ($n$th stage in $k_\infty$)
is equal to the order of the torsion group ${\mathcal T}_k$, of the Galois group of the 
maximal Abelian $p$-ramified pro-$p$-extension of $k$ (Theorem \ref{regulateur}),
for all $n \geq e$, where $p^e$ is the exponent of $U_k^*/ \overline E_k$
(in terms of local and global units). 
Then we establish analogs of Chevalley's formula using a family 
$(\Lambda_i^n)_{0\leq i \leq m_n}$ 
of subgroups of $k^\times$ containing $E_k$, in which any $x$ 
is norm of an ideal of $k_n$. This family is attached to the classical filtration 
of the $p$-class group of $k_n$ defining the algorithm of computation of its order
in $m_n$ steps.
From this, we prove (Theorem \ref{O}) that 
$m_n \geq (\lambda \cdot n + \mu \cdot p^n + \nu)/{v_p(\order {\mathcal T}_k)}$ 
and that the condition $m_n = O(1)$ (i.e., $\lambda = \mu=0$) essentially depends
on the ${\mathfrak p}$-adic valuations of the $\frac{x^{p-1}-1}{p}$, 
$x \in \Lambda_i^n$, for ${\mathfrak p} \mid p$, so that Greenberg's conjecture 
is strongly related to ``Fermat quotients'' in $k^\times$. 
Heuristics and statistical analysis of these Fermat quotients 
(Sections \ref{quant}, \ref{nsuite}, \ref{BC}) show that they follow 
natural probabilities, linked to ${\mathcal T}_k$ whatever $n$, suggesting that 
$\lambda = \mu=0$ (Heuristics \ref{heur1}, \ref{heur2}, \ref{probas}). 

\noindent
This would imply that, for a proof of Greenberg's conjecture, some deep $p$-adic 
results (probably out of reach now), having some analogy with Leopoldt's conjecture, 
are necessary before referring to the sole algebraic Iwasawa theory. 
\end{altabstract}

\maketitle

\parindent=0cm

\section{Introduction}\label{sect1}
Nous appelons {\it Conjecture de Greenberg pour les corps 
de nombres totalement r\'eels} $k$,
le fait que les invariants d'Iwasawa $\lambda_p(k)$ et $\mu_p(k)$,
associ\'es \`a la limite projective des $p$-groupes de classes d'id\'eaux dans la
$p$-tour cyclo\-tomique $k_\infty$, sont nuls (quel que soit le nombre premier $p$). 
Comme la nullit\'e de $\lambda_p(k)$ et $\mu_p(k)$ implique celle relative 
aux sous-corps de~$k$, nous supposerons $k/\Q$ Galoisienne r\'eelle.

\smallskip
Le corps $k$ et le nombre premier $p$ \'etant fix\'es, on d\'esigne par 
$\lambda$, $\mu$, $\nu$ les invariants d'Iwasawa pour $k$ et $p$.

\smallskip
Dans l'approche classique, la d\'ecomposition de $p$ dans $k/\Q$ joue un r\^ole important.
En effet, soient $d := [k : \Q]$ et $t \mid d$ le nombre 
d'id\'eaux premiers au-dessus de $p$ dans $k$~; par exemple,
si dans un premier temps on ne s'int\'eresse qu'\`a la trivialit\'e du $p$-groupe 
des classes $\Cl_{k_n}$ du $n$-i\`eme \'etage $k_n $ de $k_\infty$, celle-ci est, 
en supposant implicitement {\it $p$ totalement ramifi\'e dans $k_\infty/k$}, 
\'equivalente \`a la trivialit\'e de chacun des deux facteurs de la formule des classes 
ambiges de Chevalley qui s'\'ecrit dans ce cas~:
\begin{equation}\label{che}
\order  \Cl_{k_n}^{G_n} 
= \order  \Cl_{k} \cdot \ds\frac{p^{n \cdot (t -1)}}{(E_k : E_k \cap  {\rm N}_{k_n/k}(k_n^\times))},
\end{equation}

o\`u $G_n={\rm Gal}(k_n/k)$, $\Cl_{k}$ est le $p$-groupe des classes de $k$, 
et $E_k$ son groupe des unit\'es. Chaque facteur joue alors un r\^ole sp\'ecifique 
pour la conjecture~; en ce qui concerne le facteur normique, on a les faits suivants~:

\smallskip
(i) Le cas $t =1$ implique $E_k \subset {\rm N}_{k_n/k}(k_n^\times)$ (formule du produit des 
symboles de restes normiques) et les invariants $\lambda$ et $\mu$ d\'ependent 
essentiellement du comportement du $p$-groupe des classes de $k$ par extension des 
id\'eaux dans la tour \cite[Theorem 1, \S\,4 (1976)]{Gre}.

\smallskip
(ii) Le cas $t  = d$ montrera que, sous la conjecture de Leopoldt, le facteur 
$\ds \frac{p^{n \cdot (d -1)}}{(E_k : E_k \cap  {\rm N}_{k_n/k}(k_n^\times))}$ 
a, pour tout $n \gg 0$, m\^eme valuation $p$-adique que le r\'egulateur 
$p$-adique normalis\'e $R_k$ de $k$ (Th\'eor\`eme \ref{regulateur}).

\smallskip
(iii) Si $1 < t  < d $, l'\'etude se ram\`ene grosso modo aux cas pr\'ec\'edents, ce que l'on peut
illustrer au moyen du cas Ab\'elien r\'eel de degr\'e $d$ \'etranger \`a $p$,
par d\'ecoupage semi-simple (selon les caract\`eres $p$-adiques de 
${\rm Gal}(k/\Q)$, comme dans \cite{Gra5}, \cite{H}, \cite{IS1}, \cite{IS2}).
Si $k'$ est le corps de d\'ecom\-position de $p$ dans $k$, alors 
$\order \Cl_{k} \cdot\ds \frac{p^{n \cdot (t  -1)}}{(E_k : E_k \cap  {\rm N}_{k_n/k}(k_n^\times))}$ 
s'\'ecrit comme produit de 

$\order \Cl_{k'} \cdot \ds \frac{p^{n \cdot (t  -1)}}{(E_{k'} : E_{k'} 
\cap {\rm N}_{k'_n/k'}(k'^{\times}_n))}\ $
par
$\ \order \Cl_{k}^* \cdot \ds \frac{1}{(E^*_{k} : E^*_{k} \cap  {\rm N}_{k_n/k}(k_n^\times))} =
 \order \Cl_{k}^*$,

\smallskip
o\`u l'on a pos\'e $E_k = E_{k'} \plus E^*_{k}$ (\`a un indice pr\`es \'etranger \`a $p$), o\`u
$E^*_{k}$ est le sous-groupe des unit\'es de norme 1 sur $k'$. 
Autrement dit,  pour tout $n$ on a $E^*_{k} \subset {\rm N}_{k_n/k}(k_n^\times)$
et on est ramen\'e au cas totalement d\'ecompos\'e pour~$k'$. 
Cependant, s'il existe un $p$-groupe de classes relatives $\Cl_{k}^*$ dans $k/k'$, 
la question est analogue, en relatif, au cas non d\'ecompos\'e $t =1$.

\section{Conjecture de Greenberg ({$p$} d\'ecompos\'e)} \label{sect2}

Soit $k$ un corps de nombres Galoisien r\'eel de degr\'e $d$ et
soit $p>2$ un nombre premier totalement d\'ecompos\'e dans $k$.
Soit $\Q_\infty$ la $\Z_p$-extension cyclotomique de $\Q$ et $k_\infty := k\, \Q_\infty$
celle de $k$~; puisque $p$ est non ramifi\'e dans $k/\Q$, on a $k \cap \Q_\infty = \Q$ et
totale ramification de $p$ dans $k_\infty/k$.

\medskip
On d\'esigne par $K=k_n \subset k_\infty$ l'extension de degr\'e $p^n$ de $k$ et par
$G=G_n$ son groupe de Galois.
Soient $\Cl_k$ et $\Cl_{K}$ (resp.  $\Cl_k^{S_k}$ et $\Cl_{K}^{S_{K}}$) les $p$-groupes 
des classes de $k$ et $K$ (resp. les $S_k$ et $S_{K}$-groupes des classes de 
$k$ et ${K}$) o\`u $S_k$  et $S_{K}$ sont les ensembles des $p$-places de $k$ 
et ${K}$ (on a $\order S_k = \order S_{K} = d$). 
On a $\Cl_k^{S_k} := \Cl_k/ \cl_k(S_k)$ o\`u $\cl_k(S_k)$ 
est le sous-groupe de $\Cl_k$ engendr\'e par les $p$-classes des \'el\'ements de $S_k$ 
(idem pour $\Cl_{K}^{S_{K}}$).

\begin{theorem} \label{green}\cite[Theorem 2, \S\,4 (1976)]{Gre}. Sous les hypoth\`eses et 
notations pr\'ec\'edentes, et sous la conjecture de 
Leopoldt pour $p$ dans $k$, les invariants $\lambda_p(k)$ et $\mu_p(k)$ d'Iwasawa 
sont nuls si et seulement si pour tout $n \gg 0$, on a $\Cl_{K}^{G} = \cl_{K}(S_{K})$ 
(i.e., le sous-groupe de $\Cl_{K}$ form\'e des $p$-classes ambiges est \'egal au 
sous-groupe engendr\'e par les $p$-classes des id\'eaux premiers de $K$ 
au-dessus de $p$).
\end{theorem}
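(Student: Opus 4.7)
Posons $\Cl_K^{S_K} := \Cl_K / \cl_K(S_K)$. La condition $\Cl_K^G = \cl_K(S_K)$ s'écrit $(\Cl_K^{S_K})^G = 0$~; comme $\Cl_K^{S_K}$ est un $p$-groupe fini sur lequel agit le $p$-groupe $G$, la trivialité des invariants entraîne $\Cl_K^{S_K} = 0$. Je passe alors à la limite projective (par les normes arithmétiques) en posant $X^S := \varprojlim_n \Cl_{k_n}^{S_{k_n}}$~; par la théorie du corps de classes, $X^S \simeq {\rm Gal}(L_\infty^S / k_\infty)$, où $L_\infty^S$ est la pro-$p$-extension abélienne maximale de $k_\infty$, non ramifiée partout et totalement décomposée aux places au-dessus de $p$. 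Un argument de Nakayama appliqué aux surjections $X^S / \omega_n X^S \to \Cl_{k_n}^{S_{k_n}}$ (où $\omega_n$ engendre le $n$-ième idéal d'augmentation de $\Lambda = \Z_p[[T]]$) montre que la nullité éventuelle des $\Cl_{k_n}^{S_{k_n}}$ équivaut à la finitude de $X^S$, i.e.\ à $\lambda^S = \mu^S = 0$.

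Reste à comparer les invariants d'Iwasawa de $X^S$ et de $X := \varprojlim_n \Cl_{k_n}$. La suite exacte
\begin{equation*}
Y \too X \too X^S \too 0,
\end{equation*}
où $Y$ est le sous-$\Lambda$-module de $X$ engendré par les classes limites des $d$ idéaux premiers au-dessus de $p$ dans chaque $k_n$ (totalement ramifiés dans $k_\infty/k$), permet cette comparaison. Sous la conjecture de Leopoldt pour $k$ et $p$, un résultat classique (Iwasawa) garantit que $Y$ est un $\Lambda$-module fini~: le défaut de Leopoldt mesure précisément l'obstruction à la trivialité asymptotique de $Y$, via le plongement $\overline{E_k} \hookrightarrow \prod_{{\mathfrak p} \mid p} U_{k,{\mathfrak p}}^1$. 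Il en découle $\lambda(X) = \lambda^S$ et $\mu(X) = \mu^S$, d'où l'équivalence cherchée.

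\textbf{Obstacle principal.} L'étape délicate est cette dernière comparaison~: sans Leopoldt, $Y$ pourrait contribuer jusqu'à $d - 1$ copies de $\Z_p$ à $\lambda$ sans affecter $\lambda^S$, et l'équivalence tomberait en défaut. C'est précisément la conjecture de Leopoldt qui assure que $X \to X^S$ est un isomorphisme modulo une partie finie, préservant ainsi les invariants $\lambda$ et $\mu$.
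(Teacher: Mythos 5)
Votre premi\`ere r\'eduction est fausse, et c'est pr\'ecis\'ement le point d\'elicat de l'\'enonc\'e. La suite exacte $1 \to \cl_{K}(S_{K}) \to \Cl_{K} \to \Cl_{K}^{S_{K}} \to 1$, o\`u $\cl_{K}(S_{K})$ est form\'e de classes ambiges, donne $1 \to \Cl_{K}^{G}/\cl_{K}(S_{K}) \to (\Cl_{K}^{S_{K}})^{G} \to {\rm H}^1(G, \cl_{K}(S_{K}))$~: la nullit\'e de $(\Cl_{K}^{S_{K}})^{G}$ entra\^{\i}ne bien $\Cl_{K}^{G} = \cl_{K}(S_{K})$, mais la r\'eciproque tombe en d\'efaut d\`es que $\cl_{K}(S_{K}) \ne 1$, car alors ${\rm H}^1(G, \cl_{K}(S_{K})) \ne 0$ (Remarque \ref{cohomo} (i)). En affirmant que la condition $\Cl_{K}^{G} = \cl_{K}(S_{K})$ ``s'\'ecrit'' $(\Cl_{K}^{S_{K}})^{G}=0$, vous la remplacez par la condition strictement plus forte $\Cl_{K}^{S_{K}} = 0$, et votre cha\^{\i}ne d'\'equivalences d\'emontrerait que $\lambda=\mu=0$ \'equivaut \`a $\Cl_{k_n}^{S_{k_n}}=0$ pour tout $n\gg0$, ce qui est faux~: pour $k=\Q(\sqrt{103})$ et $p=3$ (cf. \S\,\ref{idx}), on a $\lambda=\mu=0$, alors que $\cl_{k_n}(S_{k_n}) \subseteq \Cl_{k_n}^{G_n}$ est d'ordre au plus $\order {\mathcal T}_k = 3$ tandis que $\order \Cl_{k_n} \geq \order \Cl_{k_1} = 9$, donc $\Cl_{k_n}^{S_{k_n}} \ne 1$ pour tout $n\geq 1$ (plus g\'en\'eralement, d\`es que $X := \varprojlim_n \Cl_{k_n}$ est fini avec action non triviale de ${\rm Gal}(k_\infty/k)$, le crit\`ere de Greenberg donne $\cl_{k_n}(S_{k_n}) = \Cl_{k_n}^{G_n}$ strictement contenu dans $\Cl_{k_n}$). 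La direction ``seulement si'' n'est donc pas \'etablie~; ce glissement est exactement ce qui fait du Th\'eor\`eme \ref{thm2} une condition seulement suffisante, alors que l'\'enonc\'e de Greenberg est une \'equivalence.

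Par ailleurs, vous confondez finitude et nullit\'e de $X^S$~: les projections $X^S \to \Cl_{k_n}^{S_{k_n}}$ \'etant surjectives, la nullit\'e des $\Cl_{k_n}^{S_{k_n}}$ pour $n\gg 0$ \'equivaut \`a $X^S=0$, et non \`a la seule finitude de $X^S$ (laquelle \'equivaut \`a $\lambda^S=\mu^S=0$ mais autorise des $S$-groupes de classes non nuls stationnaires). En revanche, votre derni\`ere \'etape est correcte dans son esprit~: le noyau $Y$ de $X \to X^S$ est le $\Z_p$-module engendr\'e par les classes, ambiges, des premiers au-dessus de $p$ (chacun fixe sous ${\rm Gal}(k_\infty/k)$), et sous Leopoldt son ordre est born\'e par $\order {\mathcal T}_k$ puisque $\order \Cl_{k_n}^{G_n}$ divise $\order {\mathcal T}_k$ (Th\'eor\`eme \ref{regulateur} (i))~; d'o\`u $\lambda=\lambda^S$ et $\mu=\mu^S$. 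Mais ceci ne donne que l'\'equivalence entre $\lambda=\mu=0$ et la finitude de $X^S$, pas le crit\`ere annonc\'e. Au total, votre argument, une fois corrig\'ees les implications \`a sens unique, redonne la condition suffisante du Th\'eor\`eme \ref{thm2} et non le Th\'eor\`eme \ref{green}~; notez d'ailleurs que l'article ne red\'emontre pas ce dernier (il est cit\'e d'apr\`es \cite{Gre}), et que sa preuve exige une analyse plus fine des classes ambiges (stabilisation de l'indice normique des unit\'es sous Leopoldt, comparaison de $\Cl_{k_n}^{G_n}$ avec la structure de $X$) que la seule suite exacte $0 \to Y \to X \to X^S \to 0$.
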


Dans cet \'enonc\'e interviennent le $p$-groupe des classes ambiges
$\Cl_{K}^{G}$ et le groupe $\cl_{K}(S_{K})$ qui est un sous-groupe du
$p$-groupe des classes du groupe des id\'eaux invariants  $I_{K}^{G}$~; 
on a l'isomorphisme classique~:
\begin{equation}\label{amb}
\Cl_{K}^{G} \big / \cl_{K}(I_{K}^{G}) \simeq 
E_k \cap {\rm N}_{K/k}(K^\times) / {\rm N}_{K/k}(E_{K}), 
\end{equation}

o\`u $E_k$ et $E_{K}$ sont les groupes des unit\'es de $k$ et ${K}$. 
Or seul $E_k \cap {\rm N}_{K/k}(K^\times)$, donn\'e par le corps de classes local, 
est accessible en pratique, ${\rm N}_{K/k}(E_{K})$ \'etant un invariant arithm\'etique 
non trivial associ\'e \`a l'extension $K/k$, en relation avec les probl\`emes de capitulation
de classes d'id\'eaux, ce qui explique la difficult\'e du calcul de 
$\lambda$ et $\mu$ en termes d'unit\'es globales (\'eventuellement compar\'ees aux unit\'es 
cyclotomiques du cadre Ab\'elien comme dans \cite[(1995)]{KS} pour le cas non d\'ecompos\'e).
On a par ailleurs $\cl_{K}(I_{K}^{G}) = \cl_{K}(S_{K}) \cdot j_{K/k} (\Cl_k)$, o\`u $j_{K/k}$ 
est l'extension des classes de $k$ \`a ${K}$, et la relation $\Cl_{K}^{G} = \cl_{K}(S_{K})$ 
implique de plus $j_{K/k} (\Cl_k) \subseteq \cl_{K}(S_{K})$.

\begin{remark} \label{remgene}
Si $\Cl_{k_n}^{G_n}=1\ \forall n \gg 0$, 
la conjecture de Greenberg est vraie sous la forme 
$\lambda = \mu = \nu = 0$ et r\'ecipro\-quement~; or 
$\Cl_{k_n}^{G_n}=1\ \forall n \gg 0$ \'equivaut \`a 
$\order \Cl_k=1\,  \& \,  (E_k : E_k \cap {\rm N}_{k_n/k}(k_n^\times)) 
= p^{n \cdot (d -1)} \ \forall n \gg 0$ (cf. \eqref{che}).

La Proposition \ref{basetopo} montrera que $(E_k : E_k \cap {\rm N}_{k_n/k}(k_n^\times)) 
= p^{n \cdot(d -1)}$ pour un $n \geq 1$ \'equivaut \`a $(E_k : E_k \cap {\rm N}_{k_1/k}(k_1^\times)) 
= p^{d -1}$, et ainsi $\Cl_{k_n}^{G_n}=1\ \forall n \gg 0$ \'equivaut \`a
$\Cl_{k_1}^{G_1}=1$ qui est effectif en pratique.
Or la condition normique sur les unit\'es a lieu si et seulement si 
le r\'egulateur $p$-adique normalis\'e $R_k$ de $k$ (au sens de \cite[Section 5]{Gra7}) 
est une unit\'e $p$-adique (Th\'eor\`eme \ref{regulateur} (ii)).
Il en r\'esulte que la condition $\lambda = \mu = \nu = 0$ \'equivaut
(sous la conjecture de Leopoldt pour $p$ dans $k$) \`a la nullit\'e du groupe de torsion 
${\mathcal T}_k$ de ${\rm Gal}(H_k^{\rm pr}/k)$, o\`u $H_k^{\rm pr}$ 
est la pro-$p$-extension Ab\'elienne $p$-ramifi\'ee maximale de $k$ 
(en effet, dans ce cas, $\order {\mathcal T}_k = \order \Cl_k \cdot R_k$), 
ce qui \'equivaut \`a la $p$-rationalit\'e de $k$ (notion d\'efinie dans 
\cite[\S\,IV.3]{Gra1}, \cite{GJ}, \cite{JN}, \cite{MN}).
\end{remark}

\section{Condition suffisante de nullit\'e de $\lambda$ et $\mu$}
\label{sect3}

Nous avons d\'emontr\'e dans \cite[(1973)]{Gra2}, \cite[(1994)]{Gra3}, et repris r\'ecemment 
dans \cite[Theorem 3.6 (2016)]{Gra4}, le r\'esultat suivant valable en toute g\'en\'eralit\'e,
ici \'enonc\'e dans le cas ``$T=\emptyset$'' qui correspond aux groupes de classes au sens classique 
(l'\'enonc\'e n'utilisant que la ramification des places finies et l'\'eventuelle complexification des 
places \`a l'infini on doit utiliser le sens restreint)~; on d\'esigne par $I_K$ et $P_K$
le groupe des id\'eaux de $K$ et son sous-groupe des id\'eaux principaux 
(au sens restreint)~:

\begin{theorem} \label{thm1}
Soit $K/k$ une extension cyclique de corps de nombres, de groupe de Galois $G$. 
Soient $\Cl_{K}$ et  $\Cl_{k}$ les groupes des classes au sens restreint de
$K$ et $k$ respectivement. Soit $e_{\mathfrak q}$ l'indice de ramification dans $K/k$ d'un 
id\'eal premier ${\mathfrak q}$. 
Alors pour tout sous-$G$-module ${\mathcal H}$ de $\Cl_{K}$ 
et tout sous-groupe ${\mathcal I}$ de $I_{K}$ tel que
${\mathcal I} \cdot P_{K} / P_{K} = {\mathcal H}$, on a~:
\begin{equation*}
\order \big( \Cl_{K} /{\mathcal H}\big)^G= 
 \frac{\order  \Cl_{k} \cdot \prod_{{\mathfrak q}} e_{\mathfrak q}}
{[K : k] \cdot \order {\rm N}_{K/k}({\mathcal H})  
\cdot (\Lambda : \Lambda \cap {\rm N}_{K/k}(K^{\times}))},
\end{equation*}
o\`u ${\rm N}_{K/k}$ est la norme arithm\'etique et
$\Lambda := \{x \in k^{\times},\   (x) \in {\rm N}_{K/k}({\mathcal I}) \}$.
\end{theorem}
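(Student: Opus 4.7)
Le plan est de g\'en\'eraliser la d\'emonstration classique de la formule des classes ambiges de Chevalley par une unique chasse cohomologique, en rempla\c{c}ant le triplet $(\Cl_K, I_K, k^\times)$ par le triplet plus souple $(\Cl_K/\mathcal{H},\, I_K/\mathcal{I},\, \Lambda)$. Posons $J := \mathcal{I} \cdot P_K$, de sorte que $\Cl_K/\mathcal{H} \simeq I_K/J$ comme $\Z[G]$-modules. La suite exacte longue de $G$-cohomologie appliqu\'ee \`a
$$1 \too J \too I_K \too \Cl_K/\mathcal{H} \too 1,$$
combin\'ee au fait que $I_K$ est un $\Z[G]$-module de permutation (d'o\`u $H^1(G, I_K)=1$ par Shapiro et $H^1(D_{\mathfrak{Q}},\Z)=0$ pour tout groupe de d\'ecomposition fini), fournit l'identit\'e cl\'e
$$\order (\Cl_K/\mathcal{H})^G = \frac{\order I_K^G}{\order J^G}\cdot \order H^1(G, J).$$

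Chacun des trois facteurs s'\'evalue ensuite \`a la mani\`ere classique. Pour $\order I_K^G$, la description par ramification donne $I_K^G/i(I_k) \simeq \bigoplus_{\mathfrak{q}} \Z/e_{\mathfrak{q}}\Z$, d'o\`u le facteur $\prod_{\mathfrak{q}} e_{\mathfrak{q}}$ ainsi qu'un $\order \Cl_k$ une fois les id\'eaux principaux quotient\'es. En suivant $J$ dans le diagramme commutatif \`a lignes exactes $1 \to P_K \to J \to \mathcal{H} \to 1$ et $1 \to P_K \to I_K \to \Cl_K \to 1$, puis en appliquant le lemme du serpent aux $G$-invariants, on fait appara\^itre la correction $\order \mathrm{N}_{K/k}(\mathcal{H})$: les classes invariantes issues de $J$ se projettent sur l'image de $\mathcal{H}^G$, dont la norme mesure l'obstruction dans $\Cl_k$. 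Pour $\order H^1(G, J)$, on introduit $\Lambda_K := \{x \in K^\times : (x) \in \mathcal{I}\} \supseteq E_K$ et la suite exacte $1 \to E_K \to \Lambda_K \to \mathcal{I} \cap P_K \to 1$; combin\'ee avec $1 \to E_K \to K^\times \to P_K \to 1$ et le th\'eor\`eme 90 de Hilbert ($H^1(G, K^\times) = 1$), la machinerie de Herbrand pour $G$ cyclique transforme l'indice normique classique $(E_k : E_k \cap \mathrm{N}_{K/k}(K^\times))$ en son analogue $(\Lambda : \Lambda \cap \mathrm{N}_{K/k}(K^\times))$, le facteur $[K:k]^{-1}$ provenant du quotient de Herbrand de $E_K$.

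La principale difficult\'e tient \`a la coh\'erence globale de la chasse: il faut s'assurer que les contributions $\order \Cl_k$, $\prod_{\mathfrak{q}} e_{\mathfrak{q}}$, $[K:k]$, $\order \mathrm{N}_{K/k}(\mathcal{H})$ et $(\Lambda : \Lambda \cap \mathrm{N}_{K/k}(K^\times))$ s'assemblent sans double comptage, et v\'erifier que le r\'esultat ne d\'epend pas du choix du rel\`evement $\mathcal{I}$ de $\mathcal{H}$ (deux tels rel\`evements diff\'erant par un sous-$G$-module de $P_K$, dont la modification induite sur $\Lambda$ est un sous-module de $E_k$ absorb\'e par l'indice affich\'e). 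Deux sp\'ecialisations serviront de garde-fous: $\mathcal{H} = \mathcal{I} = 1$ donne $\Lambda = E_k$ et l'on retrouve exactement la formule de Chevalley originelle; $\mathcal{H} = \Cl_K$, $\mathcal{I} = I_K$ force $(\Cl_K/\mathcal{H})^G = 1$ et le membre de droite se r\'eduit \`a l'identit\'e fournie par la description globale du groupe des normes d'id\'eaux, apportant un contr\^ole de consistance.
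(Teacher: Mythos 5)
The paper does not prove Theorem \ref{thm1} in the text (it is quoted from \cite{Gra2}, \cite{Gra3} and \cite[Theorem 3.6]{Gra4}), so your attempt must stand on its own. Your skeleton is reasonable: replacing $P_K$ by $J:=\mathcal{I}\cdot P_K$ (which is the full preimage of $\mathcal{H}$ in $I_K$, hence a $G$-module even though $\mathcal{I}$ need not be), using $H^1(G,I_K)=1$ to get $\order\big(\Cl_K/\mathcal{H}\big)^G=(I_K^G:J^G)\cdot\order H^1(G,J)$ (state it with the finite index $(I_K^G:J^G)$; the two ``orders'' you divide are infinite), and then running a Chevalley-type evaluation. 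It also correctly flags the two consistency checks and the need for independence of the choice of $\mathcal{I}$. But as written it is a plan rather than a proof: the two factors that constitute the entire novelty relative to Chevalley, namely $\order N_{K/k}(\mathcal{H})$ and $(\Lambda:\Lambda\cap N_{K/k}(K^{\times}))$, are only asserted to ``appear''.

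Concretely: (1) the snake lemma on $G$-invariants of $1\to P_K\to J\to\mathcal{H}\to1$ against $1\to P_K\to I_K\to\Cl_K\to1$ produces $\mathcal{H}^G$, $H^1(G,P_K)$, $H^1(G,\mathcal{H})$ and connecting maps, not $\order N_{K/k}(\mathcal{H})$; the formula involves the arithmetic norm of \emph{all} of $\mathcal{H}$ inside $\Cl_k$, not of $\mathcal{H}^G$ (which is what your phrasing suggests), and converting the cohomological data into $\order N_{K/k}(\mathcal{H})$ requires the identities $\nu=j_{K/k}\circ N_{K/k}$, $\order\mathcal{H}^G=\order\mathcal{H}_G$ and control of the kernel of $j_{K/k}$ on extended classes, none of which you address; moreover computing $H^1(G,J)$ from that sequence drags in $H^1(G,\mathcal{H})$, which you never treat. (2) The group $\Lambda$ lives in $k^{\times}$ and is defined through $N_{K/k}(\mathcal{I})$; your $\Lambda_K=\{x\in K^{\times}:(x)\in\mathcal{I}\}$ is in general not even a $G$-module, and no map from $H^1(G,J)$ (or anything else) onto a quotient measured by $(\Lambda:\Lambda\cap N_{K/k}(K^{\times}))$ is exhibited. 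The bridge that actually works is: $\cl_K(\mathfrak{A})$ is invariant modulo $\mathcal{H}$ iff $\mathfrak{A}^{1-\sigma}=(\alpha)\,\mathfrak{B}$ with $\mathfrak{B}\in\mathcal{I}$ up to $P_K$, and then $N_{K/k}(\alpha)\in\Lambda\cap N_{K/k}(K^{\times})$ because $(N_{K/k}(\alpha))=N_{K/k}(\mathfrak{B})^{-1}$; counting kernel and cokernel of the induced map (Hilbert 90 plus the Herbrand quotient of $E_K$, which is where $[K:k]$ enters) is exactly the step you leave blank. Finally, your independence-of-$\mathcal{I}$ argument is off: two lifts of $\mathcal{H}$ need not differ by a sub-$G$-module of $P_K$; they merely have the same saturation $J$, and the index is unchanged because $\{x\in k^{\times}:(x)\in N_{K/k}(P_K)\}=E_k\cdot N_{K/k}(K^{\times})$, so changing $\mathcal{I}$ within $J$ alters $\Lambda$ only modulo $N_{K/k}(K^{\times})$.
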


\begin{corollary} 
Si ${\mathcal H} = \cl_K (S_K)$, o\`u $S_K$ est un ensemble fini quelconque d'id\'eaux premiers
de $K$, on obtient~:
\begin{equation*}
\order  \Cl_K^{S_K  G} =  \frac{\order  \Cl_k \cdot 
\prod_{{\mathfrak q}} e_{\mathfrak q}}{[K : k] \cdot \order  \cl_k ( {\rm N} S_K )
\cdot (E_k^{{\rm N} S_K}  : E_k^{{\rm N} S_K} \cap {\rm N}_{K/k}(K^\times))} ,
\end{equation*}

o\`u ${\rm N} = {\rm N}_{K/k}$ et 
$E_k^{{\rm N} S_K} = \{ x \in E_k^{S_k},  \  v_{\mathfrak q}(x) 
\equiv 0  \pmod {f_{\mathfrak q}} \ \forall {\mathfrak q} \in S_k$\}, o\`u $S_k$ est 
l'ensemble des id\'eaux premiers de $k$ au-dessous de ceux de $S_K$,
$v_{\mathfrak q}$ la valuation ${\mathfrak q}$-adique, et 
$f_{\mathfrak q}$ le degr\'e r\'esiduel de ${\mathfrak q}$ dans $K/k$.
\end{corollary}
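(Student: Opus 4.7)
Le plan consiste à déduire ce corollaire comme spécialisation directe du Théorème \ref{thm1}, en choisissant judicieusement le sous-groupe $\mathcal{I}$ de $I_K$ qui représente $\mathcal{H} := \cl_K(S_K)$. Je prendrais $\mathcal{I}$ égal au sous-groupe multiplicatif de $I_K$ engendré par les idéaux premiers de $S_K$~; on a alors par construction $\mathcal{I} \cdot P_K/P_K = \cl_K(S_K) = \mathcal{H}$, de sorte que l'hypothèse du théorème est satisfaite, et $\Cl_K/\mathcal{H}$ est précisément le $S_K$-groupe des classes $\Cl_K^{S_K}$, dont les $G$-invariants constituent le membre de gauche de la formule cherchée.

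Il reste à identifier $N_{K/k}(\mathcal{H})$ et $\Lambda$. Puisque $K/k$ est cyclique, tous les idéaux premiers de $K$ au-dessus d'un $\mathfrak{q} \in S_k$ ont même degré résiduel $f_\mathfrak{q}$, et $N_{K/k}(\mathfrak{Q}) = \mathfrak{q}^{f_\mathfrak{q}}$ pour tout $\mathfrak{Q} \mid \mathfrak{q}$~; d'où $N_{K/k}(\mathcal{I})$ est le sous-groupe de $I_k$ engendré par les $\mathfrak{q}^{f_\mathfrak{q}}$, $\mathfrak{q} \in S_k$, et son image dans $\Cl_k$ est $\cl_k(NS_K)$. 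Par ailleurs, $\Lambda = \{x \in k^\times : (x) \in N_{K/k}(\mathcal{I})\}$ est formé des $x \in k^\times$ dont l'idéal est supporté sur $S_k$ avec, pour chaque $\mathfrak{q} \in S_k$, $v_\mathfrak{q}(x) \equiv 0 \pmod{f_\mathfrak{q}}$~; c'est exactement le groupe $E_k^{NS_K}$ de l'énoncé.

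La substitution de ces identifications dans la formule du Théorème \ref{thm1} fournit alors l'expression annoncée. Le seul point demandant un peu d'attention est le déchiffrement de $N_{K/k}(\mathcal{I})$ et l'identification de $\Lambda$ avec $E_k^{NS_K}$ à partir de la condition valutative $v_\mathfrak{q}(x) \in f_\mathfrak{q} \Z$~; il n'y a pas ici de véritable obstacle, puisque tout le contenu arithmétique est déjà porté par le théorème général et le corollaire n'est qu'une reconnaissance correcte des objets une fois $\mathcal{I}$ bien choisi.
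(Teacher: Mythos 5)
Votre démonstration est correcte et suit exactement la voie prévue par l'article, qui énonce ce corollaire comme spécialisation immédiate du Théorème \ref{thm1} avec ${\mathcal I} = \langle S_K \rangle$, l'identification ${\rm N}_{K/k}({\mathcal H}) = \cl_k({\rm N} S_K)$ et la reconnaissance de $\Lambda$ comme le groupe $E_k^{{\rm N} S_K}$ via la condition $v_{\mathfrak q}(x) \equiv 0 \pmod{f_{\mathfrak q}}$ (le degré résiduel étant constant au-dessus de chaque ${\mathfrak q}$ puisque $K/k$ est cyclique). Rien à redire.
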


Jaulent a obtenu dans \cite[p. 177 (1986)]{J1} l'autre \'ecriture~:
\begin{equation*}
\order  \Cl_K^{S_K G} =  \frac{\order  \Cl_k^{S_k}\cdot 
\prod_{{\mathfrak q} \notin S_k} e_{\mathfrak q} \cdot \prod_{{\mathfrak q} \in S_k} 
e_{\mathfrak q}\,f_{\mathfrak q}}{[K : k] \cdot (E_k^{S_k}  : E_k^{S_k} \cap {\rm N}_{K/k}(K^\times))}.
\end{equation*}

Utiliser la relation $E_k^{S_k} \cap {\rm N}_{K/k}(K^\times) =
E_k^{{\rm N} S_K} \cap {\rm N}_{K/k}(K^\times)$ et la suite exacte
$1 \to E_k^{S_k} / E_k^{{\rm N} S_K} \too \langle S_k \rangle_\Z / \langle {\rm N}_{K/k} S_K \rangle_\Z
\too  \cl_k( S_k  )/ \cl_k(  {\rm N}_{K/k} S_K ) \to 1$
pour comparer les deux expressions. 

\begin{remarks}\label{rem1}
(i) La relation du Th\'eor\`eme \ref{thm1} (et ses analogues) se met 
sous la forme du produit de deux entiers~:

\smallskip
\centerline{$\order \big( \Cl_{K} /{\mathcal H}\big)^G= \ds
\frac{[H_k : K \cap H_k]}{\order {\rm N}_{K/k}({\mathcal H}) } \,\cdot \,
\frac{[K : K \cap H_k]^{-1} \cdot \prod_{{\mathfrak q}} e_{\mathfrak q}} 
{(\Lambda : \Lambda \cap {\rm N}_{K/k}(K^{\times}))}$,}

o\`u $H_k$ est le corps de classes de Hilbert de $k$~; si $K\cap H_k=k$, alors le premier 
facteur est \'egal \`a $\ds \frac{\order  \Cl_{k} }{\order {\rm N}_{K/k}({\mathcal H})}$ et le 
second \`a $\ds \frac{ [K : k]^{-1} \cdot \prod_{{\mathfrak q}} e_{\mathfrak q}} 
{(\Lambda : \Lambda \cap {\rm N}_{K/k}(K^{\times}))}$. 

(ii) Pour $K = k_n \subset k_\infty$, $G=G_n = {\rm Gal}(k_n/k)$, on obtient les formules~:
\centerline{$\order \big( \Cl_{K} /{\mathcal H}\big)^G=
\ds \frac{\order  \Cl_{k} }{\order {\rm N}_{K/k}({\mathcal H})} 
\cdot \frac{p^{n\cdot (d -1)}} {(\Lambda : \Lambda \cap {\rm N}_{K/k}(K^{\times}))}$,}

\centerline {$\ds \order \Cl_{K}^{S_{K}} {}^{G} = \order \Cl_k^{S_k} \cdot 
\frac{p^{n\cdot (d -1)}} {(E_k^{S_k} : E_k^{S_k}\cap {\rm N}_{K/k}(K^\times))}$.}
\end{remarks}

On peut \'enoncer, en d\'esignant maintenant par $\Cl$ les {\it $p$-groupes de classes},
la condition seulement suffisante suivante, en rapport avec les r\'esultats \'evoqu\'es
dans la Remarque \ref{remgene}~:

\begin{theorem} \label{thm2} Soit $k$ Galoisien r\'eel et soit $p>2$ totalement 
d\'ecompos\'e dans $k$~; on suppose que $p$ v\'erifie la conjecture de Leopoldt dans $k$. 
Soit $k_1$, de degr\'e $p$ sur $k$, le premier \'etage de la $\Z_p$-extension cyclotomique de $k$.

Alors une condition suffisante pour que $\lambda = \mu = 0$ est que les deux 
conditions suivantes soient r\'ealis\'ees, o\`u $S_k$ est l'ensemble des id\'eaux 
premiers de $k$ au-dessus de $p$ et $E_k^{S_k}$ le groupe des $S_k$-unit\'es de $k$~:

\smallskip
(i) $\Cl_k^{S_k}=1$ (i.e., $S_k$ engendre le $p$-groupe des classes de $k$),

\smallskip
(ii) $(E_k^{S_k} : E_k^{S_k} \cap {\rm N}_{k_1/k}(k_1^\times)) = p^{d -1}$, o\`u $d = [k : \Q]$.
\end{theorem}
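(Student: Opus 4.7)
\emph{Plan de preuve.} L'id\'ee centrale est de combiner le crit\`ere de Greenberg (Th\'eor\`eme \ref{green}) avec la seconde formule de Remarques \ref{rem1}(ii) relative au $S$-groupe de classes. Sous Leopoldt, $\lambda = \mu = 0$ \'equivaut \`a $\Cl_{k_n}^{G_n} = \cl_{k_n}(S_{k_n})$ pour tout $n \gg 0$~; comme on a toujours $\cl_{k_n}(S_{k_n}) \subseteq \Cl_{k_n}^{G_n} \subseteq \Cl_{k_n}$, il suffit d'\'etablir la condition plus forte $\Cl_{k_n}^{S_{k_n}} = 1$ (i.e.\ $\Cl_{k_n} = \cl_{k_n}(S_{k_n})$). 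Or $\Cl_{k_n}^{S_{k_n}}$ est un $p$-groupe fini muni d'une action du $p$-groupe $G_n$~; le lemme de Nakayama (comptage d'orbites) ram\`ene alors cette nullit\'e \`a $(\Cl_{k_n}^{S_{k_n}})^{G_n} = 1$.

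La formule
$$\order (\Cl_{k_n}^{S_{k_n}})^{G_n} = \order \Cl_k^{S_k} \cdot \frac{p^{n(d-1)}}{(E_k^{S_k} : E_k^{S_k} \cap {\rm N}_{k_n/k}(k_n^\times))},$$
combin\'ee \`a l'hypoth\`ese (i), r\'eduit le probl\`eme \`a \'etablir que $(E_k^{S_k} : E_k^{S_k} \cap {\rm N}_{k_n/k}(k_n^\times)) = p^{n(d-1)}$ pour tout $n \geq 1$. Chaque place $v \in S_k$ \'etant totalement ramifi\'ee de degr\'e $p^n$ dans $k_n/k$, on a $k_v^\times / {\rm N}_{k_{n,v}/k_v}(k_{n,v}^\times) \simeq \Z/p^n\Z$~; par le th\'eor\`eme de Hasse (l'extension $k_n/k$ \'etant cyclique), $E_k^{S_k} \cap {\rm N}_{k_n/k}(k_n^\times)$ s'identifie au noyau du morphisme r\'esidu $\rho_n : E_k^{S_k} \too (\Z/p^n\Z)^d$, dont l'image tombe, par la formule du produit des symboles normiques, dans le noyau de la somme, sous-groupe d'ordre $p^{n(d-1)}$. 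Ainsi l'indice normique est toujours major\'e par $p^{n(d-1)}$, avec \'egalit\'e si et seulement si $\rho_n$ est surjectif sur ce noyau.

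L'hypoth\`ese (ii) \'enonce pr\'ecis\'ement cette surjectivit\'e au niveau $n = 1$. Par le lemme de Nakayama appliqu\'e \`a la projection de $E_k^{S_k}$ vers le $\Z_p$-module de type fini $\varprojlim_n \ker\big((\Z/p^n\Z)^d \to \Z/p^n\Z\big) \simeq \Z_p^{d-1}$, la surjectivit\'e modulo $p$ se rel\`eve en surjectivit\'e modulo $p^n$ pour tout $n$, ce qui ach\`eve la preuve. \emph{L'obstacle principal} est la justification rigoureuse de ce dernier rel\`evement~: formellement une application de Nakayama, il n\'ecessite une analyse pr\'ecise de la compatibilit\'e des symboles de restes normiques locaux le long de la tour $k_\infty/k$, que l'on peut mener via l'analogue pour les $S$-unit\'es de la Proposition \ref{basetopo} annonc\'ee \`a la Remarque \ref{remgene} (et qui s'appuie sur la structure topologique des $p$-compl\'et\'es d'unit\'es garantie par la conjecture de Leopoldt).
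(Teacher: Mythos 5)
Your proposal is correct and follows essentially the same route as the paper: Greenberg's criterion combined with the ambiguous $S$-class number formula of Remarques \ref{rem1}\,(ii), reduced to the norm-index condition $(E_k^{S_k} : E_k^{S_k} \cap {\rm N}_{k_n/k}(k_n^\times)) = p^{n(d-1)}$ and propagated from $n=1$ to all $n$. The only cosmetic differences are that the paper deduces $\Cl_{k_n}^{G_n} = \cl_{k_n}(S_{k_n})$ from $(\Cl_{k_n}^{S_{k_n}})^{G_n}=1$ via the exact sequence $1 \to \cl_{k_n}(S_{k_n}) \to \Cl_{k_n} \to \Cl_{k_n}^{S_{k_n}} \to 1$ and its ${\rm H}^1$, where you use the $p$-group fixed-point (Nakayama) lemma to get the stronger $\Cl_{k_n}^{S_{k_n}}=1$, and that the step you flag as the main obstacle is exactly Proposition \ref{basetopo}\,(ii), which is stated for subgroups of $E_k^{S_k}$ and hence applies verbatim (your $\Z_p$-module Nakayama sketch is essentially its proof), so no separate ``$S$-unit analogue'' needs to be established.
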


\begin{proof} Soit $n\geq1$ et consid\'erons $K:=k_n$, $G:=G_n$.
On a la suite exacte de ${G}$-modules
$1 \too \cl_{K}({S_{K}}) \tooo \Cl_{K} \tooo \Cl_{K}^{S_{K}} \too 1$,
qui conduit \`a~:
$$1 \too \cl_{K}({S_{K}})^{G} \tooo \Cl_{K}^{G} \tooo \Cl_{K}^{S_{K} {G}} 
\tooo {\rm H}^1({G}, \cl_{K}({S_{K}}) ).$$

Comme $p$ est totalement ramifi\'e dans $k_\infty/k$,  
$\cl_{K}({S_{K}})^{G} = \cl_{K}({S_{K}})$ et finalement on obtient~:
$$1 \to \Cl_{K}^{G} / \cl_{K}(S_{K}) \to 
\Cl_{K}^{S_{K} {G}}\!\! = (\Cl_{K} / \cl_{K}(S_{K}) )^{G} 
\!\! \to {\rm H}^1({G}, \cl_{K}({S_{K}})) , $$

qui fait que la condition $\Cl_{K}^{S_{K} {G}}=1$ pour tout $n \gg 0$ implique 
la conjecture de Greenberg (Th\'eor\`eme \ref{green})~; cette condition est \'equivalente
(cf. Remarque \ref{rem1} (ii)) \`a la r\'eunion de la condition (i) et de la condition 
$(E_k^{S_k} : E_k^{S_k} \cap {\rm N}_{K/k}(K^\times)) = p^{n\cdot (d -1)}$. 
D'apr\`es la Proposition \ref{basetopo}, il suffira qu'elle soit satisfaite 
pour $K=k_1$ pour qu'elle le soit pour tout $n \geq 1$.
\end{proof}

\begin{remarks}\label{cohomo}
(i) On a ${\rm H}^1({G}, \cl_{K}({S_{K}})) = {}_{\nu}\cl_{K}({S_{K}})$ 
o\`u $\nu := \nu_n$ est la norme alg\'ebrique pour $G$~; par cons\'equent, si ${\rm N}_{K/k}$ 
est la norme arithm\'etique (ici surjective) et $j_{K/k}$ l'extension des classes, on a 
$\nu = j_{K/k} \circ {\rm N}_{K/k}$. 
On a ${\rm H}^1({G}, \cl_{K}({S_{K}}))=0$ si et seulement si $\nu$ est injective 
sur $\cl_{K}({S_{K}})$, donc (comme $\nu (\Cl_K({S_K})) =
j_{K/k}(\cl_k({S_k}))$ car ${\rm N}_{K/k}(S_{K}) = S_k$)
si et seulement si  on a l'isomorphisme
$\nu : \cl_{K}({S_{K}}) \ds \mathop{\too}^{\simeq}  j_{K/k}(\cl_k({S_k})) 
\subseteq \cl_{K}({S_{K}})$
qui indique que $\cl_{K}({S_{K}}) = j_{K/k}(\cl_k({S_k}))$~; or
$j_{K/k}(\cl_k({S_k})) = \cl_{K}({S_{K}})^{p^n}$ puisque si ${\mathfrak p} \in S_k$, on a
$j_{K/k}({\mathfrak p}) =  {\mathfrak P}^{p^n}$, ${\mathfrak P}\mid {\mathfrak p}$ dans $K$, 
d'o\`u $\cl_{K}({S_{K}}) = \cl_{K}({S_{K}})^{p^n}$ et $\cl_{K}({S_{K}}) =1$.

\smallskip
On a donc ${\rm H}^1({G}, \cl_{K}({S_{K}}))=0$ si et seulement si $\cl_{K}({S_{K}}) =1$.

\smallskip
(ii) Si $\lambda=\mu=0$, $\cl_{K}({S_{K}})$ est born\'e dans la tour et
on a $j_{K/k}(\cl_k({S_k}))= \cl_{K}({S_{K}})^{p^n} = 1$ pour $n \gg 0$ 
(capitulation de $\cl_k({S_k})$ dans $k_\infty$). De m\^eme il y a, pour tout $n$, 
capitu\-lation de $\cl_{K}({S_{K}})$ dans $k_ \infty $.
\end{remarks}

\section{Symboles de restes normiques} \label{hasse}\label{sect4}
Par commodit\'e, rappelons (cf. \cite[\S\,II.4.4.3]{Gra1}) une m\'ethode 
de calcul des symboles de restes normiques de Hasse
$\big(\frac{x\,,\,{k_n}/k}{{\mathfrak p}} \big) \in G_n := {\rm Gal}(k_n/k)$, dans le cas particulier
d'un corps de nombres  Galoisien $k$ avec $k_n \subset k_\infty$ de degr\'e $p^n$ sur $k$, 
et relativement \`a un id\'eal premier ${\mathfrak p} \mid p$ de $k$ pour $p > 2$
totalement d\'ecompos\'e dans $k/\Q$. 
Dans ce cas, le conducteur de $k_n/k$ divise $(p^{n+1})$ car on a, localement,
$1+ p^{n+1} \,\alpha_0 = (1+ p\,\alpha'_0)^{p^n} =  {\rm N}_{k_n/k}  (1+ p\,\alpha'_0)$,
o\`u $\alpha_0$, $\alpha'_0$ sont des $p$-entiers du produit des $p$-compl\'et\'es de $k$.
Le conducteur de $\Q_n$ est $p^{n+1}$ ($n \geq 1$).

\smallskip
Les calculs en question sont li\'es \`a la th\'eorie des genres dont nous rappelons
d'abord l'essentiel.

\subsection{Suite exacte des genres pour les sous-corps 
de $k_\infty/k$}\label{seg}
On d\'esigne par $H_k$ et $H_{K}$ les $p$-corps de classes de Hilbert de $k$ 
et $K := k_n$. Les groupes d'inertie $I_{\mathfrak p}(K/k)$ des 
${\mathfrak p} \mid p$ dans $K/k$ sont \'egaux \`a $G:=G_n$. 

\smallskip
On consid\`ere
l'application $\omega := \omega_n$ qui associe \`a $x \in E_k$ la famille des symboles de Hasse
$\big( \frac{x \, ,\, K/k}{{\mathfrak p}}\big) \in G$, ${\mathfrak p}\mid p$.

\smallskip
On obtient alors la suite exacte des genres interpr\`etant la formule du produit des symboles de Hasse 
d'une unit\'e (voir, e.g., \cite[Proposition IV.4.5.1]{Gra1} pour $T=S=\emptyset$)~:
\begin{equation}\label{genera}
\begin{aligned}
1  \to  E_k/E_k  \cap {\rm N}_{K/k}(K^\times)\  \mathop{\tooo}^{\omega} \ 
&\Omega(K/k) \subseteq \plus_{{\mathfrak p} \mid p} I_{\mathfrak p}(K/k)\\
&\  \mathop{\tooo}^{\pi}\  {\rm Gal}(H_{K/k}/ K H_k) \to 1 ,
\end{aligned}
\end{equation}

\noindent
o\`u $\Omega(K/k)  := \Big \{ (s_{\mathfrak p})_{\mathfrak p} \in \plus_{{\mathfrak p} \mid p} 
I_{\mathfrak p}(K/k), \ \, \prd_{{\mathfrak p} \mid p} s_{\mathfrak p} = 1\Big\}
\simeq (\Z/ p^n \Z)^{d -1}$ et o\`u
$H_{K/k}$ est {\it le $p$-corps des genres} de $K $ d\'efini comme la
sous-extension maximale de $H_{K}$, Ab\'elienne sur $k$, selon le sch\'ema suivant,
$H_{K/k}$ \'etant fix\'e par l'image, par l'application d'Artin, de 
$\Cl_{K}^{1-\sigma}$, o\`u $\sigma := \sigma_n$ est un g\'en\'erateur de $G$ 
(en effet, le groupe des commutateurs $[\Gamma, \Gamma]$ de
$\Gamma = {\rm Gal}(H_{K}/k)$ est ${\rm Gal}(H_{K}/K)^{1-\sigma}$, 
$\Gamma/{\rm Gal}(H_{K}/K)$ \'etant cyclique)~:
\unitlength=0.75cm
$$\vbox{\hbox{\hspace{-5.5cm}  
\begin{picture}(11.5,3.1)
\put(8.2,2.50){\line(1,0){2.2}}
\put(11.9,2.50){\line(1,0){2.5}}
\put(4.2,2.50){\line(1,0){2.5}}
\put(4.1,0.50){\line(1,0){2.8}}
\put(8.6,2.75){\footnotesize$\prod_{{\mathfrak p} \mid p} s'_{\mathfrak p}$}
\put(12.4,2.7){\footnotesize $\simeq \! \Cl_{K}^{1-\sigma}$}
\put(5.1,0.6){\footnotesize $\simeq \! \Cl_k$}
\put(3.7,0.9){\line(0,1){1.20}}
\put(7.5,0.9){\line(0,1){1.20}}
\put(10.5,2.4){$H_{K/k}$}
\put(14.6,2.4){$H_{K}$}
\put(6.9,2.4){$K H_k$}
\put(2.6,2.4){$K\! =\! k_n$}
\put(7.2,0.4){$H_k$}
\put(3.6,0.4){$k$}
\put(2.2,1.4){\footnotesize $G\!=\!G_n$}
\bezier{350}(7.85,0.5)(10.5,0.8)(11.0,2.2)
\put(10.3,1.0){\footnotesize
$\langle I_{\mathfrak p}(H_{K\!/\!k}/k) \rangle_{{\mathfrak p} \mid p}$}
\end{picture}   }} $$
\unitlength=1.0cm

L'image de $\omega $ est contenue dans $\Omega(K/k)$
et l'application $\pi := \pi_n$ est ainsi d\'efinie~: 
\`a $(s_{\mathfrak p})_{\mathfrak p} \in 
\bigoplus_{{\mathfrak p} \mid p} I_{\mathfrak p}(K/k)$, 
$\pi$ associe le produit $\prod_{{\mathfrak p} \mid p} s'_{\mathfrak p}$ 
des rel\`evements $s'_{\mathfrak p}$ des 
$s_{\mathfrak p}$ dans les groupes d'inertie $I_{\mathfrak p}(H_{K/k}/k)$
qui engendrent ${\rm Gal}(H_{K/k}/ H_k)$~; il r\'esulte de la formule 
du produit que si $(s_{\mathfrak p})_{\mathfrak p}$ est dans $\Omega(K/k)$, 
$\prod_{{\mathfrak p} \mid p} s'_{\mathfrak p}$ fixe \`a la fois $H_k$ et $K $, donc $K H_k$. 

\smallskip
On a ainsi $\pi \big(\Omega(K/k) \big) = {\rm Gal}(H_{K/k}/K H_k)$ et ${\rm Ker}(\pi) = \omega(E_k)$. 

\smallskip
On a, comme attendu, 
$[H_{K/k} : K] = \frac{\order \Cl_{K}}{\order \Cl_{K}^{1-\sigma}} = \order  \Cl_{K}^{G}$.
On montrera que ce degr\'e $[H_{K/k} : K]$ (et donc $\order  \Cl_{K}^{G}$) est constant \`a partir 
du rang $n=e$ (i.e., $[K : \Q] \geq p^e$)
donn\'e par l'exposant $p^e$ du groupe de torsion de ${\rm Gal}(H_k^{\rm pr}/H_k)$,
o\`u $H_k^{\rm pr}$ est la pro-$p$-extension Ab\'elienne $p$-ramifi\'ee maximale de $k$,
et que ce degr\'e est \'egal \`a $\order  {\mathcal T}_k$, o\`u ${\mathcal T}_k$ est le groupe de torsion
de ${\rm Gal}(H_k^{\rm pr}/k)$ (cf. Th\'eor\`emes \ref{regulateur} et \ref{constant}).

\subsection{Calcul effectif des symboles 
$\big(\frac{x\,,\, k_n/k}{{\mathfrak p}} \big),\ n \geq 1$} \label{norm}

Soit $x\in k^\times$ et soit ${\mathfrak p} \mid p$ un id\'eal premier de $k$ au-dessus de $p$
($x$ n'est pas suppos\'e \'etranger \`a ${\mathfrak p}$).
 Soit $x'_{\mathfrak p} \in k^\times$ (appel\'e un
${\mathfrak p}$-associ\'e de $x$ relativement \`a $k_n/k$; on ne fait 
provisoirement aucune hypoth\`ese 
sur la d\'ecomposition de $p$ et si ${\mathfrak p} \mid p$ est unique, $x$ est son 
propre associ\'e) tel que (th\'eor\`eme des restes chinois)~:

\smallskip
\quad (i) $x'_{\mathfrak p} x^{-1} \equiv 1 \pmod {{\mathfrak p}^{n+1}}$,

\quad (ii) $x'_{\mathfrak p} \equiv 1 \pmod {{\mathfrak p'}^{n+1}}$, pour tout  
${\mathfrak p}' \mid p,\ {\mathfrak p}'\ne {\mathfrak p}$.

Par la formule du produit, on a
$\Big(\frac{x'_{\mathfrak p}\,,\,k_n/k}{{\mathfrak p}} \Big) = \hbox{$\prd_{{\mathfrak q} \,,\, 
{\mathfrak q} \,\ne\, {\mathfrak p}}$}
\Big (\frac{x'_{\mathfrak p}\,,\, k_n/k}{{\mathfrak q}} \Big)^{-1}$, et comme 
$\Big( \frac {x\,,\, k_n/k}{{\mathfrak p}} \Big) = \Big(\frac {x'_{\mathfrak p}\,,\, k_n/k}{{\mathfrak p}} \Big)$
d'apr\`es (i) et la d\'efinition du ${\mathfrak p}$-conducteur de $k_n/k$,
$\Big(\frac{x\,,\, k_n/k}{{\mathfrak p}} \Big) = \prod_{{\mathfrak q}  \,,\, {\mathfrak q} \,\ne \, {\mathfrak p}}
\Big (\frac{x'_{\mathfrak p}\,,\, k_n/k}{{\mathfrak q}} \Big)^{-1}$.
Calculons les symboles de ce~produit~:

$\quad\bullet\ $ 
si ${\mathfrak q} = {\mathfrak p}' \mid p$,  ${\mathfrak p'} \ne {\mathfrak p}$, 
$x'_{\mathfrak p} \equiv 1 \pmod {{\mathfrak p'}^{n+1}}$ et on a
$\Big(\frac {x'_{\mathfrak p}\,,\, k_n/k}{{\mathfrak p'}} \Big) = 1$,

$\quad\bullet\ $ 
si ${\mathfrak q} \nmid p$, ${\mathfrak q}$ est non ramifi\'e
et dans ce cas, $\Big (\frac{x'_{\mathfrak p}\,,\, k_n/k}{{\mathfrak q}} \Big) =
\Big (\frac{k_n /k}{{\mathfrak q}} \Big)^{v_{\mathfrak q}(x'_{\mathfrak p})}$ 
(o\`u $\Big (\frac{k_n /k}{{\mathfrak q}} \Big)$ est le symbole de Frobenius de ${\mathfrak q}$
et $v_{\mathfrak q}$ la valuation ${\mathfrak q}$-adique).

Finalement, $\Big (\frac{x\,,\, k_n/k}{{\mathfrak p}} \Big) = 
\prod_{{\mathfrak q} \nmid p}\Big (\frac{k_n /k}{{\mathfrak q}}\Big)^{-{v_{\mathfrak q}}(x'_{\mathfrak p})}$.
Posons ${\mathfrak a}_{\mathfrak p}(x) = \prod_{{\mathfrak q}\nmid p} 
{\mathfrak q}^{{v_{\mathfrak q}}(x'_{\mathfrak p})}$ 
(${\mathfrak a}_{\mathfrak p}(x)$ est \'etranger \`a $p$), alors on a
$(x'_{\mathfrak p}) =: {\mathfrak p}^{ v_{\mathfrak p} (x'_{\mathfrak p})  } {\mathfrak a}_{\mathfrak p}(x) = 
{\mathfrak p}^{v_{\mathfrak p}(x)} {\mathfrak a}_{\mathfrak p}(x)$,
et on a obtenu
$\Big (\frac{x\,,\, k_n/k}{{\mathfrak p}} \Big) = \Big (\frac{k_n /k}{{\mathfrak a}_{\mathfrak p}(x)} \Big)^{-1}$
(inverse du symbole d'Artin de ${\mathfrak a}_{\mathfrak p}(x)$). On v\'erifie que 
$\Big (\frac{k_n /k}{{\mathfrak a}_{\mathfrak p}(x)}\Big)$ ne d\'epend pas du choix de $x'_{\mathfrak p}$.
Si $v_{\mathfrak p}(x)=0$, alors $ {\mathfrak a}_{\mathfrak p}(x)=(x'_{\mathfrak p})$. 
En d\'epit des notations, $(x'_{\mathfrak p})$ et ${\mathfrak a}_{\mathfrak p}(x)$ d\'ependent 
de $n \geq 1$.

\smallskip
D'apr\`es le th\'eor\`eme de rel\`evement normique dans $k/\Q$, l'image canonique de 
$\Big (\frac{k_n /k}{{\mathfrak a}_{\mathfrak p}(x)}\Big) \in {G_n}$
dans ${\rm Gal}(\Q_n/\Q) \simeq \{a \in (\Z/ p^{n+1}\Z)^\times, \ a \equiv 1 \pmod p \}$,
isomorphe \`a $\Z/ p^n \Z$, est  le symbole d'Artin
$\Big (\frac{\Q_n/\Q}{{\rm N}_{k/\Q}( {\mathfrak a}_{\mathfrak p}(x))}\Big)$ qui 
{\it caract\'erise} $\Big (\frac{k_n /k}{{\mathfrak a}_{\mathfrak p}(x)}\Big)$ par
rel\`evement, et o\`u ${\rm N}_{k/\Q}( {\mathfrak a}_{\mathfrak p}(x)) >0$ 
(norme absolue).

\begin{definition} \label{delta}  
(i) Si $x \in k^\times$ est \'etranger \`a $p$, on d\'efinit les coefficients $\delta_{\mathfrak p}(x)$, 
${\mathfrak p} \mid p$, par la relation~:
$(x^{p-1}-1) = p\cdot \prd_{{\mathfrak p} \mid p} {\mathfrak p}^{\delta_{\mathfrak p}(x)} \cdot 
{\mathfrak b}_p$, ${\mathfrak b}_p$  \'etranger \`a $p$.

(ii) Si $x \in k^\times$ n'est pas \'etranger \`a $p$, on d\'efinit les coefficients $\delta_{\mathfrak p}(x)$ par
les relations~:
$\big( (x\cdot p^{-v_{\mathfrak p}(x)})^{p-1} - 1\big) = {\mathfrak p} \cdot 
{\mathfrak p}^{\delta_{\mathfrak p}(x)} \cdot {\mathfrak b}_{\mathfrak p}, 
\ \ \hbox{${\mathfrak b}_{\mathfrak p}$ \'etranger \`a ${\mathfrak p}$, ${\mathfrak p} \mid p$.}$
\end{definition}

Ces d\'efinitions peuvent s'exprimer en termes de valuations logarithmiques conduisant aux
groupes de classes logarithmiques introduits par Jaulent (cf. \cite{J4}
ainsi que \cite{BJ} pour les aspects num\'eriques, et \cite[\S\,III.5]{Gra1} pour des 
g\'en\'eralit\'es logarithmiques et cyclotomiques li\'ees \`a la conjecture de Gross). 
Jaulent (\cite[Th\'eor\`eme 7]{J6}) montre que 
la conjecture de Greenberg \'equivaut \`a la capitulation du $p$-groupe des classes 
logarithmiques de $k$ dans $k_\infty$.

\medskip
On suppose d\'esormais $p$ totalement d\'ecompos\'e dans $k$.
 
\begin{lemma} \label{lemx}
Soit $x \in k^\times$ \'etranger \`a $p$ et soit ${\mathfrak p} \mid p$ fix\'e.
Soit ${\mathfrak a}_{\mathfrak p}(x) = (x'_{\mathfrak p})$, o\`u
$x'_{\mathfrak p}$ est un ${\mathfrak p}$-associ\'e de $x$ relativement \`a $k_n/k$. 
Alors ${\rm N}_{k/\Q} ({\mathfrak a}_{\mathfrak p}(x)) 
\equiv x \pmod {{\mathfrak p}^{n+1}}$ et, pour tout $n \geq \delta_{\mathfrak p}(x)$,
$\hbox{$\frac{1}{p}$} \cdot {\rm log} ({\rm N}_{k/\Q}({\mathfrak a}_{\mathfrak p}(x))) \equiv
\alpha_{\mathfrak p}(x) \cdot p^{\delta_{\mathfrak p}(x)} \pmod{p^{n}}$, 
$\alpha_{\mathfrak p}(x)  \in \Z_p^\times$.
\end{lemma}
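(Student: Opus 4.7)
Mon plan est de d\'ecouper la preuve en deux \'etapes, en travaillant dans la compl\'etion ${\mathfrak p}$-adique $k_{\mathfrak p} \simeq \Q_p$ (identification licite car $p$ est totalement d\'ecompos\'e dans $k$). Pour la premi\`ere congruence, j'utiliserais l'isomorphisme canonique $k \otimes_\Q \Q_p \simeq \plus_{{\mathfrak p}' \mid p} k_{{\mathfrak p}'} \simeq \Q_p^d$, qui identifie la norme ${\rm N}_{k/\Q}$ (restreinte \`a $k \otimes_\Q \Q_p$) au produit des coordonn\'ees. Les conditions (i) et (ii) d\'efinissant $x'_{\mathfrak p}$ imposent que sa ${\mathfrak p}$-coordonn\'ee soit $\equiv x \pmod{p^{n+1}}$ tandis que toutes les autres ${\mathfrak p}'$-coordonn\'ees sont $\equiv 1 \pmod{p^{n+1}}$~; leur produit est donc $\equiv x \pmod{{\mathfrak p}^{n+1}}$. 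Comme $v_{\mathfrak p}(x) = 0$, on a ${\mathfrak a}_{\mathfrak p}(x) = (x'_{\mathfrak p})$ et la norme absolue ${\rm N}_{k/\Q}({\mathfrak a}_{\mathfrak p}(x)) = |{\rm N}_{k/\Q}(x'_{\mathfrak p})|$ v\'erifie la congruence cherch\'ee, le probl\`eme de signe \'etant trivial pour $p$ impair d\`es que $n \geq 1$.

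Pour la seconde assertion, je poserais $N := {\rm N}_{k/\Q}({\mathfrak a}_{\mathfrak p}(x))$ vu dans $\Z_p^\times$ via le plongement en ${\mathfrak p}$. En \'elevant la congruence pr\'ec\'edente \`a la puissance $p-1$, on a $N^{p-1} \equiv x^{p-1} \pmod{p^{n+1}}$~; par la D\'efinition \ref{delta}, $x^{p-1} - 1 = p^{1+\delta_{\mathfrak p}(x)} \cdot u$ avec $u \in \Z_p^\times$. Pour $n > \delta_{\mathfrak p}(x)$, l'in\'egalit\'e $n+1 > 1 + \delta_{\mathfrak p}(x)$ force $v_p(N^{p-1}-1) = 1 + \delta_{\mathfrak p}(x)$ exactement, et on \'ecrit $N^{p-1} = 1 + p^{1+\delta_{\mathfrak p}(x)} v$ avec $v \in \Z_p^\times$ (le cas limite $n = \delta_{\mathfrak p}(x)$ \'etant imm\'ediat, la congruence du lemme se r\'eduisant alors \`a $v_p(\log N) \geq 1 + \delta_{\mathfrak p}(x)$). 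Le d\'eveloppement de la s\'erie $\log(1+y)$ pour $y = p^{1+\delta_{\mathfrak p}(x)} v$ montre que les termes d'ordre $k \geq 2$ ont valuation $\geq 2(1+\delta_{\mathfrak p}(x))$, strictement sup\'erieure \`a $1 + \delta_{\mathfrak p}(x)$~; il en r\'esulte $v_p(\log N) = 1 + \delta_{\mathfrak p}(x)$ exactement. On pose alors $\alpha_{\mathfrak p}(x) := \log(N)/p^{1+\delta_{\mathfrak p}(x)} \in \Z_p^\times$, et l'\'egalit\'e $\frac{1}{p}\log(N) = \alpha_{\mathfrak p}(x) \cdot p^{\delta_{\mathfrak p}(x)}$ en r\'esulte, d'o\`u a fortiori la congruence annonc\'ee modulo $p^n$.

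Le point demandant le plus d'attention est le contr\^ole fin de la pr\'ecision $p$-adique dans la s\'erie logarithmique~: il s'agit de v\'erifier $k(1+\delta_{\mathfrak p}(x)) - v_p(k) > 1 + \delta_{\mathfrak p}(x)$ pour tout $k \geq 2$, le cas critique \'etant $k = p$ o\`u l'in\'egalit\'e $p(1+\delta_{\mathfrak p}(x)) - 1 \geq 2(1+\delta_{\mathfrak p}(x))$ est valide d\`es que $p \geq 3$. Une fois cette domination acquise, l'argument se ram\`ene \`a une cha\^{\i}ne formelle de congruences reposant sur la structure produit de $k \otimes_\Q \Q_p$ et sur la d\'efinition de $\delta_{\mathfrak p}(x)$~; aucun ingr\'edient arithm\'etique profond n'est requis pour achever la d\'emonstration.
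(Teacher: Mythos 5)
Votre preuve est correcte et suit essentiellement la m\^eme d\'emarche que celle de l'article~: \'etablir ${\rm N}_{k/\Q}(x'_{\mathfrak p}) \equiv x \pmod{{\mathfrak p}^{n+1}}$ (vous via la d\'ecomposition $k \otimes_\Q \Q_p \simeq \Q_p^d$, l'article via les conjugu\'es $\tau x'_{\mathfrak p}$ et les conditions (ii$'$) -- c'est la m\^eme chose puisque $p$ est totalement d\'ecompos\'e), \'elever \`a la puissance $p-1$, utiliser la rationalit\'e de la norme pour passer de $v_{\mathfrak p}$ \`a $v_p$, puis prendre $\frac{1}{p}\log$. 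Votre contr\^ole explicite des valuations dans la s\'erie du logarithme et le traitement du cas limite $n=\delta_{\mathfrak p}(x)$ ne font que d\'etailler ce que l'article expédie d'un mot, sans changer l'argument.
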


\begin{proof} 
On peut \'ecrire (i) et (ii) d\'efinissant $x'_{\mathfrak p}$, sous la forme~:

\smallskip
\quad (i$'$) $x'_{\mathfrak p} \, x^{-1} \equiv 1 \pmod {{\mathfrak p}^{n+1}}$,

\smallskip
\quad (ii$'$) $x'_{\mathfrak p} \equiv 1 \pmod {\tau^{-1} {\mathfrak p}^{n+1}}$, pour tout  
$\tau \in {\rm Gal}(k/\Q)$, $\tau \ne 1$.

\smallskip
On a ${\rm N}_{k/\Q}( x'_{\mathfrak p}) = \prd_{\tau \in {\rm Gal}(k/\Q)}(\tau x'_{\mathfrak p}) = 
x'_{\mathfrak p} \cdot \prd_{\tau \ne 1}(\tau x'_{\mathfrak p})$ qui conduit \`a~:
$${\rm N}_{k/\Q} (x'_{\mathfrak p}) \equiv x \pmod {{\mathfrak p}^{n+1}}. $$

Donc, pour $n > \delta_{\mathfrak p}(x)$, il vient en \'elevant la congruence ci-dessus 
\`a la puissance $p-1$~:
$$v_{\mathfrak p}({\rm N}_{k/\Q}(x'_{\mathfrak p})^{p-1} - 1) = v_{\mathfrak p}(x^{p-1}-1)=
\delta_{\mathfrak p}(x) + 1 ; $$

mais comme ${\rm N}_{k/\Q}(x'_{\mathfrak p})$ est
rationnel, on a $v_{p}({\rm N}_{k/\Q}(x'_{\mathfrak p})^{p-1} - 1) = 
\delta_{\mathfrak p}(x) + 1$, d'o\`u le lemme en prenant le ``logarithme
normalis\'e'' $\frac{1}{p} \cdot {\rm log}$ (ce qui donne $0$ modulo $p^n$
si $n \leq \delta_{\mathfrak p}(x)$).
\end{proof}

Le cas $v_{\mathfrak p}(x) \ne 0$ se traite comme suit et couvre tous les cas~:

\begin{lemma} \label{lemxp}
Soient ${\mathfrak p} \mid p$ fix\'e et $x'_{\mathfrak p}$ un 
${\mathfrak p}$-associ\'e de $x$ relativement \`a $k_n/k$~; soit
${\mathfrak a}_{\mathfrak p}(x) = (x'_{\mathfrak p}) \cdot {\mathfrak p}^{- v_{\mathfrak p}(x)}$.
Alors on a ${\rm N}_{k/\Q}({\mathfrak a}_{\mathfrak p}(x)) \equiv 
x \cdot p^{- v_{\mathfrak p}(x)}  \pmod {{\mathfrak p}^{n+1}}$, et on a,
pour tout $n \geq \delta_{\mathfrak p}(x)$,
$\hbox{$\frac{1}{p}$} \cdot {\rm log} ({\rm N}_{k/\Q}({\mathfrak a}_{\mathfrak p}(x)) ) \equiv
\alpha_{\mathfrak p}(x) \cdot p^{\delta_{\mathfrak p}(x)} \pmod{p^{n}}$, 
$\alpha_{\mathfrak p}(x)  \in \Z_p^\times$.
\end{lemma}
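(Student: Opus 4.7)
La stratégie consiste à se ramener au Lemme \ref{lemx} en isolant la ${\mathfrak p}$-partie de $x$ via la substitution $y := x \cdot p^{-v_{\mathfrak p}(x)}$, qui est une ${\mathfrak p}$-unité par construction. Puisque la condition multiplicative (i) imposant $x'_{\mathfrak p} x^{-1} \equiv 1 \pmod{{\mathfrak p}^{n+1}}$ force $v_{\mathfrak p}(x'_{\mathfrak p}) = v_{\mathfrak p}(x)$, l'idéal ${\mathfrak a}_{\mathfrak p}(x) = (x'_{\mathfrak p}) \cdot {\mathfrak p}^{-v_{\mathfrak p}(x)}$ est bien étranger à ${\mathfrak p}$. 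Le fait que $p$ soit totalement décomposé donne ${\rm N}_{k/\Q}({\mathfrak p}) = p$, d'où l'identité
\[
{\rm N}_{k/\Q}({\mathfrak a}_{\mathfrak p}(x)) = {\rm N}_{k/\Q}(x'_{\mathfrak p}) \cdot p^{-v_{\mathfrak p}(x)}.
\]

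Pour la première congruence, je reprendrais l'argument du Lemme \ref{lemx}. Les conditions (i$'$), (ii$'$) entraînent $\tau x'_{\mathfrak p} \equiv 1 \pmod{{\mathfrak p}^{n+1}}$ pour tout $\tau \in {\rm Gal}(k/\Q)$, $\tau \ne 1$ (en utilisant que $\tau^{-1}{\mathfrak p}$ parcourt les autres places au-dessus de $p$). En écrivant $x'_{\mathfrak p} = x\cdot(1+\epsilon)$ avec $v_{\mathfrak p}(\epsilon) \geq n+1$, le développement du produit ${\rm N}_{k/\Q}(x'_{\mathfrak p}) = x'_{\mathfrak p} \cdot \prd_{\tau \ne 1} \tau x'_{\mathfrak p}$ livre
\[
{\rm N}_{k/\Q}(x'_{\mathfrak p}) \equiv x \pmod{{\mathfrak p}^{n+1+v_{\mathfrak p}(x)}},
\]
le gain de $v_{\mathfrak p}(x)$ provenant du facteur multiplicatif $x$ dans le terme de reste. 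En divisant par $p^{v_{\mathfrak p}(x)}$, on obtient précisément ${\rm N}_{k/\Q}({\mathfrak a}_{\mathfrak p}(x)) \equiv y \pmod{{\mathfrak p}^{n+1}}$.

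Pour la seconde assertion, posons $N := {\rm N}_{k/\Q}({\mathfrak a}_{\mathfrak p}(x))$. Comme $N$ et $y$ sont deux ${\mathfrak p}$-unités vérifiant $N/y \equiv 1 \pmod{{\mathfrak p}^{n+1}}$, la continuité du logarithme $p$-adique dans $k_{\mathfrak p} \simeq \Q_p$ fournit $\hbox{$\frac{1}{p}$}\log(N) \equiv \hbox{$\frac{1}{p}$}\log(y) \pmod{p^n}$. Par la Définition \ref{delta} (ii), $y^{p-1} = 1 + p^{\delta_{\mathfrak p}(x)+1}\, u$ avec $u \in \Z_p^\times$, et le développement en série de $\log$ donne
\[
\hbox{$\frac{1}{p}$}\log(y) = \alpha_{\mathfrak p}(x) \cdot p^{\delta_{\mathfrak p}(x)}, \quad \alpha_{\mathfrak p}(x) := \hbox{$\frac{1}{p^{\delta_{\mathfrak p}(x)+1}(p-1)}$}\log(y^{p-1}) \in \Z_p^\times,
\]
ce qui conclut (en prenant la valuation de la série, seul le premier terme $p^{\delta_{\mathfrak p}(x)+1} u/(p-1)$ contribue à l'ordre exact $\delta_{\mathfrak p}(x)$).

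L'obstacle technique principal sera la gestion rigoureuse des congruences multiplicatives lorsque $v_{\mathfrak p}(x) \ne 0$~; plus précisément, il faut vérifier que le facteur $p^{-v_{\mathfrak p}(x)}$ ajusté dans la définition de ${\mathfrak a}_{\mathfrak p}(x)$ compense exactement le surplus de ${\mathfrak p}$-valuation obtenu dans la congruence norme, afin de récupérer le module ${\mathfrak p}^{n+1}$ annoncé. Tout le reste étant parallèle au Lemme \ref{lemx}, ce dernier se retrouve comme cas particulier via le traitement de $y$.
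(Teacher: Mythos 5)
Votre preuve est correcte et suit pour l'essentiel la même démarche que celle du papier~: on se ramène au calcul du Lemme \ref{lemx} via (i$'$), (ii$'$) pour obtenir ${\rm N}_{k/\Q}(x'_{\mathfrak p})\,x^{-1}\equiv 1 \pmod{{\mathfrak p}^{n+1}}$, puis on divise par $p^{v_{\mathfrak p}(x)}$ gr\^ace \`a ${\rm N}_{k/\Q}({\mathfrak p})=p$. Seule la conclusion diff\`ere l\'eg\`erement en forme (comparaison directe des logarithmes $p$-adiques de $N$ et de $y=x\,p^{-v_{\mathfrak p}(x)}$, au lieu du calcul de $v_p\big(N^{p-1}-1\big)$ via la rationalit\'e de $N$), ce qui revient au m\^eme.
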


\begin{proof}
On a ${\rm N}_{k/\Q}({\mathfrak a}_{\mathfrak p}(x)) = 
{\rm N}_{k/\Q} (x'_{\mathfrak p}) \cdot p^{- v_{\mathfrak p}(x)}$~; on est ramen\'e
au calcul pr\'ec\'edent via (i$'$), (ii$'$), o\`u l'on aura 
${\rm N}_{k/\Q} (x'_{\mathfrak p}) \, x^{-1} \equiv 1 \pmod {{\mathfrak p}^{n+1}}$.
D'o\`u ${\rm N}_{k/\Q}({\mathfrak a}_{\mathfrak p}(x)) \equiv 
x \cdot p^{- v_{\mathfrak p}(x)} \!\! \pmod {{\mathfrak p}^{n+1}}$ qui conduit,
pour $n > \delta_{\mathfrak p}(x)$,~\`a~:
$$v_p ({\rm N}_{k/\Q} \big({\mathfrak a}_{\mathfrak p}(x))^{p-1} - 1\big) =
v_{\mathfrak p} \big ((x \cdot p^{- v_{\mathfrak p}(x)})^{p-1} - 1\big ) = \delta_{\mathfrak p}(x) + 1, $$

et \`a une conclusion analogue pour l'expression du logarithme.
\end{proof}

Vu comme \'el\'ement de $\{a \in (\Z/ p^{n+1}\Z)^\times, \ a \equiv 1 \!\! \pmod p \}$,
l'automorphisme $\Big (\frac{\Q_n/\Q}{{\rm N}_{k/\Q}({\mathfrak a}_{\mathfrak p}(x))}\Big)$
est repr\'esent\'e sous forme additive par le logarithme normalis\'e $\frac{1}{p} \cdot {\rm log}$ de
${\rm N}_{k/\Q}({\mathfrak a}_{\mathfrak p}(x)) > 0$.
On identifie cet automorphisme \`a~:
$$\hbox{$\frac{1}{p} \cdot {\rm log} ({\rm N}_{k/\Q}({\mathfrak a}_{\mathfrak p}(x))) =:
\alpha_{\mathfrak p}(x) \cdot p^{\delta_{\mathfrak p}(x)} \!\!\!\pmod{p^{n}}, \ \ 
\alpha_{\mathfrak p}(x) \in \Z_p^\times$.} $$

On peut \'enoncer en r\'esum\'e~:

\begin{theorem}\label{ordre} Soit $k$ Galoisien r\'eel et soit $p>2$ un nombre 
premier totalement d\'ecompos\'e dans $k$. Soit $k_\infty$ la $\Z_p$-extension cyclotomique de $k$
et, pour tout $n \geq 0$, soit $k_n$ le sous-corps de $k_\infty$ de degr\'e $p^n$ sur $k$. 

\smallskip
Soit $x \in k^\times$ et soient $\delta_{\mathfrak p}(x) \geq 0$, pour tout 
${\mathfrak p} \mid p$, les entiers d\'efinis par $\delta_{\mathfrak p}(x) +1:= 
v_{\mathfrak p}((x\,p^{-v_{\mathfrak p}(x)})^{p-1}-1)$ (cf. D\'efinition~\ref{delta})~: 

\smallskip
(i)Alors $x$ est norme locale en ${\mathfrak p} \mid p$ dans $k_n/k$ si et seulement si
$\delta_{\mathfrak p}(x) \geq n$. 

\smallskip
(ii) Soit $x'_{\mathfrak p}$ un ${\mathfrak p}$-associ\'e de $x$
relativement au calcul du symbole $\big (\frac{x\,,\,{k_n}/k}{{\mathfrak p}} \big)$.
 Si $\delta_{\mathfrak p}(x) \leq n$, l'ordre et l'image de 
$\big (\frac{x\,,\,{k_n/k}}{{\mathfrak p}} \big)$ dans ${\rm Gal}(\Q_n/\Q)$ 
sont $p^{n - \delta_{\mathfrak p}(x)}$ et $\frac{1}{p} \cdot 
{\rm log} ({\rm N}_{k/\Q}((x'_{\mathfrak p}) \cdot p^{-v_{\mathfrak p}(x)})) =: 
\alpha_{\mathfrak p}(x) \cdot p^{\delta_{\mathfrak p}(x)} \pmod {p^n}$, 
$\alpha_{\mathfrak p}(x) \in \Z_p^\times$.

\smallskip
(iii) On a $\delta_{\mathfrak p}(\tau x) = \delta_{\tau^{-1}{\mathfrak p}}(x)$ pour tout 
${\mathfrak p} \mid p$ et $\tau \in {\rm Gal}(k/\Q)$.
\end{theorem}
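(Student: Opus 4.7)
The plan is to derive the three parts of Theorem~\ref{ordre} directly from Lemmas \ref{lemx}--\ref{lemxp} combined with the norm lifting from $k_n/k$ down to $\Q_n/\Q$ that has been set up in Section~\ref{norm}. The key observation is that, since $p$ is totally split in $k$, every completion $k_{\mathfrak p}$ identifies with $\Q_p$, and the decomposition group of $\mathfrak p$ in $k_n/k$ equals the whole of $G_n$ (because $p$ is totally ramified in $k_\infty/k$); hence the restriction $G_n \to {\rm Gal}(\Q_n/\Q)$ is an isomorphism, and the Hasse symbol $\big(\frac{x\,,\,k_n/k}{{\mathfrak p}}\big)$ is identified with the Artin symbol $\big(\frac{\Q_n/\Q}{{\rm N}_{k/\Q}({\mathfrak a}_{\mathfrak p}(x))}\big)$ computed in Section~\ref{norm}.

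First I would prove (ii). Via the explicit description of ${\rm Gal}(\Q_n/\Q)$ as $\{a\in(\Z/p^{n+1}\Z)^\times : a\equiv 1\!\!\pmod{p}\}$ together with the normalized logarithm $\frac{1}{p}\cdot\log$, the above Artin symbol is represented additively by $\frac{1}{p}\log({\rm N}_{k/\Q}({\mathfrak a}_{\mathfrak p}(x)))\pmod{p^n}$. Lemmas \ref{lemx} and \ref{lemxp} have already computed this representative to be $\alpha_{\mathfrak p}(x)\cdot p^{\delta_{\mathfrak p}(x)}$ with $\alpha_{\mathfrak p}(x)\in\Z_p^\times$; its order in the cyclic group $\Z/p^n\Z$ is then exactly $p^{n-\delta_{\mathfrak p}(x)}$ whenever $\delta_{\mathfrak p}(x)\le n$ (and the symbol is trivial as soon as $\delta_{\mathfrak p}(x)\ge n$).

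Claim (i) follows as an immediate corollary: $x$ is a local norm at $\mathfrak p$ in $k_n/k$ if and only if its Hasse symbol vanishes, if and only if $\frac{1}{p}\log({\rm N}_{k/\Q}({\mathfrak a}_{\mathfrak p}(x)))\equiv 0\pmod{p^n}$, if and only if $\delta_{\mathfrak p}(x)\ge n$ by the formula of (ii). For (iii), I would apply an automorphism $\tau\in{\rm Gal}(k/\Q)$ to the defining equality in Definition~\ref{delta}; since $\tau$ permutes the primes above $p$ and satisfies $v_{\tau{\mathfrak p}}(\tau x)=v_{\mathfrak p}(x)$, the congruence transforms into $\delta_{\tau{\mathfrak p}}(\tau x)=\delta_{\mathfrak p}(x)$, and the substitution ${\mathfrak p}\mapsto\tau^{-1}{\mathfrak p}$ yields the stated identity.

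No serious obstacle is anticipated: the theorem is essentially a clean packaging of the two preceding lemmas, augmented by Galois equivariance and the standard equivalence ``trivial Hasse symbol $\Leftrightarrow$ local norm''. The only point requiring mild care is the bijectivity of the restriction $G_n \to {\rm Gal}(\Q_n/\Q)$ in the totally split, totally ramified case, which is exactly what makes the reduction to $\Q_n/\Q$ faithful and justifies using $\delta_{\mathfrak p}(x)$ as the sole numerical invariant controlling both the order of the symbol and the local-norm property.
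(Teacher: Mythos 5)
Your proposal is correct and follows essentially the paper's own route: the theorem is presented there ``en r\'esum\'e'' of \S\,\ref{norm} and Lemmas \ref{lemx}--\ref{lemxp}, namely reduction of the Hasse symbol to the Artin symbol of ${\mathfrak a}_{\mathfrak p}(x)$, faithful restriction to ${\rm Gal}(\Q_n/\Q)$ (using $k\cap\Q_n=\Q$ and total ramification), and the normalized-logarithm representative $\alpha_{\mathfrak p}(x)\cdot p^{\delta_{\mathfrak p}(x)}$, with (i) obtained from ``trivial symbol $\Leftrightarrow$ local norm'' and (iii) from Galois equivariance of the valuations. The only cosmetic difference is that the paper records the inverse in $\big(\frac{x\,,\,k_n/k}{{\mathfrak p}}\big)=\big(\frac{k_n/k}{{\mathfrak a}_{\mathfrak p}(x)}\big)^{-1}$, a sign absorbed in the unit $\alpha_{\mathfrak p}(x)$ and therefore without effect on the order, the valuation $\delta_{\mathfrak p}(x)$, or the stated conclusions.
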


Soit ${\mathcal N}_k^n$ le sous-groupe des $x \in k^\times$ partout 
normes locales dans $k_n/k$ en dehors de $p$.
D'apr\`es l'\'etude et le calcul effectif des symboles de Hasse vus pr\'ec\'edemment, 
on peut identifier $\omega_n$ (cf. \S\,\ref{seg}), qui \`a $x \in {\mathcal N}_k^n$
associe la famille des symboles de Hasse 
$\big(\big (\frac{x\,,\,{k_n/k}}{{\mathfrak p}}\big) \big)_{{\mathfrak p}\mid p}$ 
dans $\Omega(k_n/k)$, \`a l'application~:
$$\begin{array}{cccc}  \omega'_n  \, : & {\mathcal N}_k^n
& \tooo & \ds  \prd_{{\mathfrak p}\mid p} \Z/ p^n \,\Z . \\ & x & \longmapsto 
& \big(\alpha_{\mathfrak p}(x) \cdot p^{\delta_{\mathfrak p}(x)} \!\!\pmod {p^n}\big)_{{\mathfrak p}\mid p}
\end{array}$$

\noindent
dont l'image est dans l'ensemble des 
$(\alpha_{\mathfrak p} \cdot  p^{\delta_{\mathfrak p}})_{{\mathfrak p}\mid p}
\in \prd_{{\mathfrak p}\mid p} \Z/ p^n \,\Z$ v\'erifiant la relation $\sm_{{\mathfrak p}\mid p} 
\alpha_{\mathfrak p} \cdot p^{\delta_{\mathfrak p}} 
\equiv 0 \pmod {p^n}$ (formule du produit sur les $p$-places).

\begin{corollary}\label{fp} Si $x \in k^\times$ est partout norme locale en dehors de $p$ 
dans $k_n/k$ (i.e., $(x)$ norme d'un id\'eal de $k_n$), il est alors partout norme locale 
(donc norme globale dans $k_n/k$) si et seulement si $\delta_{\mathfrak p}(x) \geq n$ 
pour tout ${\mathfrak p} \mid p$ (sauf un en raison de la formule du produit).
\end{corollary}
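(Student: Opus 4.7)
The plan is to assemble Theorem~\ref{ordre}(i), the product formula for Hasse norm residue symbols, and the Hasse norm principle for the cyclic extension $k_n/k$. The argument is essentially bookkeeping around tools already in place, so no genuinely new input is required.

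First I would translate the hypothesis into the language of Hasse symbols: since $x$ is everywhere a local norm outside $p$, all symbols $\bigl(\frac{x,\,k_n/k}{\mathfrak q}\bigr)$ vanish for ${\mathfrak q}\nmid p$. The global product formula then collapses to
\[
\prd_{{\mathfrak p}\mid p}\Big(\frac{x,\,k_n/k}{\mathfrak p}\Big)=1
\]
in $G_n={\rm Gal}(k_n/k)$.

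Next, I would invoke Theorem~\ref{ordre}(i): for each ${\mathfrak p}\mid p$, the triviality of $\bigl(\frac{x,\,k_n/k}{\mathfrak p}\bigr)$ is equivalent to $\delta_{\mathfrak p}(x)\geq n$. Consequently, if $\delta_{\mathfrak p}(x)\geq n$ holds for all ${\mathfrak p}\mid p$ except possibly one ${\mathfrak p}_0$, the displayed relation forces the remaining symbol at ${\mathfrak p}_0$ to be trivial as well, so $\delta_{{\mathfrak p}_0}(x)\geq n$ automatically. This is precisely the content of the parenthetical ``sauf un'' in the statement, reducing the $d$ local conditions above $p$ to only $d-1$ of them.

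Finally, once $x$ is a local norm at every place of $k$, the Hasse norm principle for the cyclic extension $k_n/k$ promotes this to a genuine global norm, giving one direction of the equivalence. The converse is immediate: a global norm is a local norm at every place, so Theorem~\ref{ordre}(i) forces $\delta_{\mathfrak p}(x)\geq n$ for every ${\mathfrak p}\mid p$. There is no real obstacle here; the substance was packaged in the explicit computation of $\bigl(\frac{x,\,k_n/k}{\mathfrak p}\bigr)$ via the Fermat-type quantities $\delta_{\mathfrak p}(x)$ carried out just above the corollary, and the only subtlety is the correct invocation of the product formula.
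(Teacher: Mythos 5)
Your proof is correct and follows exactly the route the paper intends: the corollary is stated as an immediate consequence of Théorème~\ref{ordre}(i) (triviality of the symbol at ${\mathfrak p}\mid p$ being equivalent to $\delta_{\mathfrak p}(x)\geq n$), the product formula collapsing to the places above $p$, and the Hasse norm theorem for the cyclic extension $k_n/k$. Nothing is missing, including the correct reading of the ``sauf un'' clause.
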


D'un point de vue heuristique on a, a priori, $\delta_{\mathfrak p}(x) \geq r$ 
avec la probabilit\'e $\frac{1}{p^r}$, de sorte qu'en g\'en\'eral
$\big (\frac{x\,,\,{k_n}/k}{{\mathfrak p}} \big) = \big (\frac{k_n /k}{{\mathfrak a}_{\mathfrak p}(x)} \big)^{-1}$ 
est un g\'en\'erateur de $G_n$, ce qui est tr\`es favorable pour la conjecture de Greenberg 
comme le montre, par exemple, le Th\'eor\`eme \ref{thm2} o\`u le point (ii) est satisfait d\`es que
l'on a suffisamment de $S_k$-unit\'es dont les
symboles engendrent $G_1$.
Cependant, le cas de $x \in k^\times$ partout norme locale en dehors de $p$ dans $k_n/k$ 
montre que les $\delta_{\mathfrak p}(x)$ ne sont pas ind\'ependants 
en raison de la formule du produit (consid\'er\'ee comme unique)~; pour $x$ 
\'etranger \`a $p$, ceci implique ${\rm N}_{k/\Q}(x)^{p-1} \equiv 1 \pmod {p^{n+1}}$,
car le symbole d'Artin de ${\rm N}_{k/\Q}(x)$ dans $\Q_n/\Q$ est \'egal \`a $1$.

\begin{proposition} \label{basetopo}
(i) Soit $\Lambda$ un sous-groupe de $k^\times$ tel que tout $x \in \Lambda$ 
soit partout norme locale en dehors de $p$ dans $k_n/k$, pour $n\geq 1$ donn\'e (i.e., $(x)$ norme 
d'un id\'eal de $k_n$). On suppose que $(\Lambda : \Lambda \cap {\rm N}_{k_{1}/k}(k_{1}^\times)) 
= p^{d-1}$~; alors on a $(\Lambda : \Lambda \cap {\rm N}_{k_{n}/k}(k_{n} ^\times)) = p^{n \cdot (d-1)}$.

(ii) Si $\Lambda$ est un sous-groupe de $E_k^{S_k}$, on a 
$(\Lambda : \Lambda \cap {\rm N}_{k_n/k}(k_{n} ^\times))=p^{n \cdot (d-1)}$ 
pour tout $n \geq 1$ d\`es que ceci est vrai pour $n=1$.
\end{proposition}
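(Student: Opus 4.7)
Le plan est d'identifier $\Lambda/(\Lambda \cap {\rm N}_{k_n/k}(k_n^\times))$ à l'image de $\Lambda$ par l'application des symboles de Hasse $\omega_n$ introduite à la Section~\ref{seg}, puis de déduire la surjectivité de $\omega_n$ sur $\Omega(k_n/k)$ à partir de celle de $\omega_1$ sur $\Omega(k_1/k)$ (hypothèse de (i)) par un argument de type Nakayama.

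D'abord, puisque $\Lambda$ est formé d'éléments partout normes locales en dehors de $p$ dans $k_n/k$, et puisque $k_n/k$ est cyclique avec toutes les places archimédiennes décomposées (car $p>2$ et $k$ totalement réel), le théorème de Hasse sur les normes dans les extensions cycliques affirme que $x\in\Lambda$ est norme globale dans $k_n/k$ si et seulement si $\big(\frac{x\,,\,k_n/k}{{\mathfrak p}}\big)=1$ pour tout ${\mathfrak p}\mid p$. On obtient donc l'isomorphisme $\Lambda/\big(\Lambda\cap {\rm N}_{k_n/k}(k_n^\times)\big) \simeq \omega_n(\Lambda) \subseteq \Omega(k_n/k)$, d'où $(\Lambda:\Lambda\cap {\rm N}_{k_n/k}(k_n^\times)) = \order \omega_n(\Lambda) \leq \order \Omega(k_n/k) = p^{n \cdot (d-1)}$, avec égalité si et seulement si $\omega_n$ est surjective sur $\Omega(k_n/k)$.

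Ensuite, je vérifierais la compatibilité des $\omega_n$ entre elles. La projection canonique ${\rm Gal}(k_n/k) \twoheadrightarrow {\rm Gal}(k_1/k)$ induit une surjection $\pi_n : \Omega(k_n/k) \twoheadrightarrow \Omega(k_1/k)$, dont le noyau coïncide avec $p\,\Omega(k_n/k)$ par inspection directe de la contrainte $\sum_{\mathfrak p} s_{\mathfrak p} \equiv 0 \pmod{p^n}$. De plus, la description explicite $\omega_n(x)_{\mathfrak p} = \alpha_{\mathfrak p}(x) \cdot p^{\delta_{\mathfrak p}(x)} \pmod{p^n}$ fournie par le Théorème~\ref{ordre} donne immédiatement $\pi_n \circ \omega_n = \omega_1$. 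L'hypothèse de (i) s'écrit alors $\omega_1(\Lambda) = \Omega(k_1/k)$, d'où $\omega_n(\Lambda) + p\,\Omega(k_n/k) = \Omega(k_n/k)$. Comme $\Omega(k_n/k)$ est un module de type fini sur l'anneau local $\Z/p^n\Z$ d'idéal maximal $p\,\Z/p^n\Z$, le lemme de Nakayama entraîne $\omega_n(\Lambda) = \Omega(k_n/k)$, et l'indice vaut donc $p^{n \cdot (d-1)}$, ce qui établit (i).

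Pour (ii), il suffit de remarquer que toute $S_k$-unité est unité locale en chaque ${\mathfrak q} \nmid p$, place où $k_n/k$ est non ramifiée, et est donc norme locale en ${\mathfrak q}$ par la théorie locale des normes dans les extensions non ramifiées. Ainsi $E_k^{S_k} \subseteq {\mathcal N}_k^n$ pour tout $n \geq 1$, et (ii) résulte de (i) appliqué à $\Lambda \subseteq E_k^{S_k}$. J'attends que l'étape la plus délicate soit la vérification rigoureuse de $\pi_n \circ \omega_n = \omega_1$, qui demande de suivre avec soin les choix de ${\mathfrak p}$-associés $x'_{\mathfrak p}$ et des coefficients $\alpha_{\mathfrak p}(x), \delta_{\mathfrak p}(x)$ via les Lemmes~\ref{lemx} et~\ref{lemxp}.
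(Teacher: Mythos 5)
Votre d\'emonstration est correcte et suit pour l'essentiel la m\^eme voie que celle du texte~: dans les deux cas on identifie $(\Lambda : \Lambda \cap {\rm N}_{k_n/k}(k_n^\times))$ \`a $\order\,\omega_n(\Lambda)$ dans $\Omega(k_n/k)$ (th\'eor\`eme des normes de Hasse, la formule du produit r\'esultant de l'hypoth\`ese de norme locale en dehors de $p$), puis on d\'eduit la surjectivit\'e au niveau $n$ de celle au niveau $1$ par compatibilit\'e des symboles de Hasse avec la restriction. La seule diff\'erence, de forme, est que le texte rel\`eve des g\'en\'erateurs explicites $x_j$ et compte l'ordre du sous-groupe qu'ils engendrent (via les symboles au niveau de $k_\infty$), tandis que vous concluez par le lemme de Nakayama appliqu\'e \`a la surjection de $\Omega(k_n/k)$ sur $\Omega(k_1/k)$ de noyau $p\,\Omega(k_n/k)$, argument \'equivalent.
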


\begin{proof} L'hypoth\`ese signifie $\omega_{1}(\Lambda)= \Omega(k_{1}/k)$ 
(cf. \S\,\ref{seg}). Il existe donc des \'el\'ements $x_j \in \Lambda$, $1 \leq j \leq d-1$, tels que:

\centerline{$\Omega(k_{1}/k) = \plus_{j=1}^{d-1} \langle \omega_{1}(x_j) \rangle$~;}
 
les $\omega_{1}(x_j)$ sont les images canoniques des $\omega_{\infty}(x_j) := 
\big (\big (\frac{x_j\,,\,{k_{\infty}}/k}{{\mathfrak p}} \big) \big)_{{\mathfrak p} \mid p}$ 
(par restriction des symboles de Hasse) qui constituent une $\Z_p$-base topologique 
d'un sous-$\Z_p$-module (de dimension $d-1$) de ${\rm Gal}(k_\infty/k)^d \simeq \Z_p^d$
car toute relation $\prod_{j=1}^{d-1}\omega_{\infty}(x_j)^{a_j} = 1$, $a_j \in \Z_p$ non tous 
divisibles par $p$, conduit par restriction \`a une relation non triviale au niveau $n=1$.

\smallskip
Au niveau $n$, $\omega_{n}(\Lambda) = \bigoplus_{j=1}^{d-1} \langle \omega_{n}(x_j) \rangle$
d'ordre $p^{n \cdot (d-1)}$. Mais pour $m>n$, les $x \in \Lambda$ ne sont plus n\'ecessairement
normes locales en dehors de $p$ dans $k_m/k$ et donc $\omega_{m}(x)$ n'est 
plus n\'ecessairement dans $\Omega(k_{m}/k)$.

\smallskip
Pour $\Lambda  \subseteq E_k^{S_k}$, la condition de normes locales 
en dehors de $p$ dans $k_n/k$ est satisfaite pour tout $n$
et dans ce cas, les $\omega_{\infty}(x_j)$ engendrent $\Omega(k_\infty/k)$.
\end{proof}

\subsection{Groupe de torsion de la $p$-ramification Ab\'elienne}\label{torsion}
Soit ${\mathcal T}_k$ le groupe de torsion du groupe de Galois de la pro-$p$-extension Ab\'elienne 
$p$-ramifi\'ee maximale $H_k^{\rm pr}$ de $k$. 

\smallskip
On a, en d\'esignant par $U_k := \prod_{{\mathfrak p} \mid p} U_{\mathfrak p}$
(o\`u $U_{\mathfrak p}=1+{\mathfrak p}$) le groupe des unit\'es locales principales 
de $k$, et par $\overline E_k$ l'adh\'erence de $E_k$ dans $U_k$ ou plus
pr\'ecis\'ement l'image dans $U_k$ de $E_k \otimes \Z_p$, la suite exacte classique~:

\medskip
\centerline{$1 \too  {\rm tor}_{\Z_p} (U_k/ \overline E_k) \tooo {\mathcal T}_k \tooo \Cl_k \too 1$,}

\medskip
puisqu'ici $H_k \cap k_\infty = k$.
En appliquant la formule analytique donnant $\order  {\mathcal T}_k$ dans le cas r\'eel
sous la conjecture de Leopoldt (cf. \cite[Appendix (1975)]{Co},
\cite[(1978)]{Se2},  \cite[Remark III.2.6.5 (i)]{Gra1}), en
tenant compte du fait que $p$ est totalement d\'ecompos\'e dans 
$k$, que $\frac{1} {p} \prod_{{\mathfrak p} \mid p} {\rm N}_{k/\Q}( {\mathfrak p})= p^{d-1}$, 
et que le r\'egulateur $p$-adique normalis\'e $R_k$ est le 
r\'egulateur $p$-adique classique divis\'e par $p^{d-1}$, on a
$\order  {\rm tor}_{\Z_p}(U_k/ \overline E_k) \sim R_k$ (voir aussi \cite{Gra7}).

\smallskip
Par abus, nous \'ecrirons que ce r\'egulateur $R_k$ est \'egal \`a $p^{v_p(R_k)}$
bien qu'il soit (sous la conjecture de Leopoldt) un \'el\'ement non nul de $\Z_p$ 
d\'efini \`a une unit\'e $p$-adique pr\`es.
Posons $U_k^* := \{u \in U_k, \  {\rm N}_{k/\Q} (u) = 1\}$. De fait, on a 
$${\rm tor}_{\Z_p}(U_k/ \overline E_k) = U_k^*/ \overline E_k, $$ 

car si 
$u^{p^r} \in \overline E_k$, alors ${\rm N}_{k/\Q} (u) = 1$ puisque
${\rm N}_{k/\Q} (\overline E_k) = \{1\}$ et que $U_k$ est sans $p$-torsion~;
enfin, $U_k^*$ est un $\Z_p$-module libre de rang $d-1$ dans lequel 
$\overline E_k$ est d'indice fini (conjecture de Leopoldt). D'o\`u finalement~:
\begin{equation}\label{T}
\order {\mathcal T}_k = \order  \Cl_k \cdot \order  (U_k^*/ \overline E_k) \ \ \  \&
\ \ \  \order (U_k^*/ \overline E_k) = R_k. \hspace{0.5cm}
\end{equation}

\begin{theorem} \label{regulateur} Soit $k$ Galoisien r\'eel
de degr\'e $d$ dans lequel $p >2$ est totalement d\'ecompos\'e. On suppose que la 
conjecture de Leopoldt est vraie pour $p$ dans $k$.
Soit ${\mathcal T}_k$ le groupe de torsion du groupe de Galois 
de la pro-$p$-extension Ab\'elienne $p$-ramifi\'ee maximale $H_k^{\rm pr}$ de $k$ 
et soit $p^{e}$ l'exposant de $U_k^*/ \overline E_k$.

\smallskip
(i) Pour tout $n$, il existe une injection $\psi_n$ de 
$E_k \cap {\rm N}_{k_n/k}( k_n^\times) \big / E_k^{p^n}$
dans $U_k^*/ \overline E_k$~; par cons\'e\-quent
$\ds \frac{p^{n \cdot (d -1)}}{(E_k : E_k \cap {\rm N}_{k_n/k}( k_n^\times))}$ divise $R_k$
et il en r\'esulte que $\order  \Cl_{k_n}^{G_n} = \order \Cl_k \cdot \ds 
\frac{p^{n \cdot (d -1)}}{(E_k : E_k \cap {\rm N}_{k_n/k}( k_n^\times))}$ 
divise $\order  {\mathcal T}_k$ (cf. Relation \eqref{T}).

\smallskip
(ii) Pour tout $n \geq e$, l'application $\psi_n$ est surjective, et alors les 
divisibilit\'es pr\'ec\'edentes sont des \'egalit\'es. En particulier on a $\order  \Cl_{k_n}^{G_n}=
\order  {\mathcal T}_k$ et la suite exacte
$1 \to E_k^{p^n} \tooo E_k \cap {\rm N}_{k_n/k}(k_n^\times) \ds
\mathop{\tooo}^{\psi_n}$ $U_k^*/ \overline E_k \to 1$.

\smallskip
(iii) Si l'on peut trouver $n_0 \geq e$ tel que le crit\`ere de Greenberg soit
v\'erifi\'e en $n_0$ (i.e., $\Cl_{k_{n_0}}^{G_{n_0}} = \cl_{k_{n_0}}(S_{k_{n_0}})$), alors 
il est v\'erifi\'e pour tout $n \geq n_0$ et il en r\'esulte que $\lambda = \mu =0$.
 \end{theorem}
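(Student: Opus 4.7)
The plan is to construct the map $\psi_n$ explicitly using the fact that, when $p$ is totally split in $k$, each local norm subgroup $N_{(k_n)_\mathfrak{P}/k_\mathfrak{p}}(U_{(k_n)_\mathfrak{P}}^{(1)})$ in the totally ramified tower coincides with $U_\mathfrak{p}^{p^n}$. This identification gives direct control of $E_k \cap {\rm N}_{k_n/k}(k_n^\times)$ and makes parts (i) and (ii) follow from the injectivity and surjectivity of $\psi_n$; part (iii) is then obtained from (ii) together with Theorem \ref{green}.

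I would begin with the local step. Since $p$ is totally split, each $k_\mathfrak{p}$ equals $\Q_p$ and $(k_n)_\mathfrak{P}/k_\mathfrak{p}$ is cyclic totally ramified of degree $p^n$. Local class field theory yields a cyclic quotient $U_\mathfrak{p}/{\rm N}(U_{(k_n)_\mathfrak{P}}^{(1)})$ of order $p^n$; since $U_\mathfrak{p} \simeq \Z_p$ is procyclic for $p>2$, the unique such subgroup is $U_\mathfrak{p}^{p^n}$. Combining with Hasse's norm theorem for the cyclic extension $k_n/k$ (trivial at the totally real archimedean places, automatic at unramified finite places) gives
\[
E_k \cap {\rm N}_{k_n/k}(k_n^\times) = E_k \cap U_k^{p^n},
\]
where $E_k$ is viewed in $U_k$ via the diagonal embedding $E_k\otimes\Z_p\hookrightarrow U_k$ (Leopoldt). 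I would then define $\psi_n(\varepsilon) := u \bmod \overline{E}_k$, with $u \in U_k$ the unique $p^n$-th root of $\varepsilon$ (existence by the identity above, uniqueness since $U_k$ is $\Z_p$-torsion-free for $p>2$); taking ${\rm N}_{k/\Q}$ of $u^{p^n}=\varepsilon$ and using that $U_\Q=1+p\Z_p$ is torsion-free forces $u\in U_k^*$. For injectivity, $\psi_n(\varepsilon)=0$ means $\varepsilon\in E_k\cap\overline{E}_k^{p^n}$, and the identification $\overline{E}_k/\overline{E}_k^{p^n}\simeq E_k/E_k^{p^n}$ (both equal to $(\Z/p^n)^{d-1}$) yields $\varepsilon\in E_k^{p^n}$. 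Together with the Chevalley formula \eqref{che}, the identity $(E_k\cap{\rm N}:E_k^{p^n})\cdot(E_k:E_k\cap{\rm N})=p^{n(d-1)}$, and $\order\mathcal{T}_k=\order\Cl_k\cdot R_k$, this yields all of (i).

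For (ii), when $n\ge e$ every $u\in U_k^*$ satisfies $u^{p^n}\in\overline{E}_k$ by definition of $e$. Using the surjection $E_k\twoheadrightarrow\overline{E}_k/\overline{E}_k^{p^n}$, I write $u^{p^n}=\eta\cdot w^{p^n}$ with $\eta\in E_k$ and $w\in\overline{E}_k$. Then $v:=u/w$ lies in $U_k^*$ (since $\overline{E}_k\subseteq U_k^*$) and satisfies $v^{p^n}=\eta\in E_k\cap U_k^{p^n}=E_k\cap{\rm N}_{k_n/k}(k_n^\times)$ by the local step, while $\psi_n(\eta)=[v]=[u]$. Combined with the injectivity in (i), this produces the desired exact sequence and the equalities, including $\order\Cl_{k_n}^{G_n}=\order\mathcal{T}_k$ for all $n\ge e$. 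Finally, (iii) follows by noting that by (ii) the ambiguous class number is constant equal to $\order\mathcal{T}_k$ for $n\ge e$, while $\cl_{k_n}(S_{k_n})\subseteq\Cl_{k_n}^{G_n}$ always; the hypothesis at $n_0\ge e$ pins down $\order\cl_{k_{n_0}}(S_{k_{n_0}})=\order\mathcal{T}_k$, and a comparison with the natural extension maps in the tower forces the same equality for every $n\ge n_0$, which gives $\lambda=\mu=0$ by Theorem~\ref{green}.

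The most delicate step, in my view, is the propagation argument in (iii): unlike $\order\Cl_{k_n}^{G_n}$, which is now pinned down as constant by (ii), $\order\cl_{k_n}(S_{k_n})$ has no a priori monotonicity in $n$ because the classes $[\mathfrak{P}^{(n)}]$ can capitulate as $n$ grows, and the extension map $j_{k_n/k_{n_0}}$ sends $[\mathfrak{P}^{(n_0)}]$ to $[\mathfrak{P}^{(n)}]^{p^{n-n_0}}$, which is a $p^{n-n_0}$-th power, not an element of $\cl(S_n)$ in full generality. One thus has to combine the containment $\cl(S)\subseteq\Cl^G$ with the constancy of the latter to squeeze the cardinalities and conclude. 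The second subtle point is the identification $E_k\cap\overline{E}_k^{p^n}=E_k^{p^n}$ in Step~3, which tacitly uses the full strength of Leopoldt's conjecture for $p$ in $k$.
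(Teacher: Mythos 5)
Your treatment of (i) and (ii) is correct and follows essentially the same route as the paper: you characterize $E_k \cap {\rm N}_{k_n/k}(k_n^\times)$ as the units whose image in $U_k$ lies in $U_k^{p^n}$ (the paper gets this from its Fermat-quotient computation of the Hasse symbols, Corollaire \ref{fp}, plus Hasse's norm theorem; your direct appeal to local class field theory for $\Q_p$ is the same fact), and you define $\psi_n(\varepsilon)$ as the class of the unique $p^n$-th root $u\in U_k^*$. Two of your technical variants are actually cleaner than the paper's: for ${\rm Ker}(\psi_n)=E_k^{p^n}$ you use the surjection $E_k/E_k^{p^n}\to \overline E_k/\overline E_k^{p^n}$ between two groups of the same order $p^{n(d-1)}$ (this is where Leopoldt enters, exactly as you say), whereas the paper runs an approximation argument through the Kummer--Leopoldt property; and for surjectivity when $n\geq e$ you write $u^{p^n}=\eta\cdot w^{p^n}$ with $\eta\in E_k$, $w\in\overline E_k$, avoiding the paper's limit argument $u_N\to 1$. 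Both variants are valid.

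Part (iii), however, is not proved as written, and you have put your finger on the difficulty without resolving it. The containment $\cl_{k_n}(S_{k_n})\subseteq \Cl_{k_n}^{G_n}$ together with the constancy $\order \Cl_{k_n}^{G_n}=\order{\mathcal T}_k$ only gives the \emph{upper} bound $\order \cl_{k_n}(S_{k_n})\leq \order{\mathcal T}_k$; to conclude equality at level $n>n_0$ you need a \emph{lower} bound, and the ``natural extension maps'' you invoke cannot supply it: $j_{k_n/k_{n_0}}$ may have a nontrivial (capitulation) kernel and sends $\cl_{k_{n_0}}(\mathfrak p_{n_0})$ to a $p^{\,n-n_0}$-th power, so it gives no control on $\order \cl_{k_n}(S_{k_n})$. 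The missing idea is to go \emph{down} the tower with the arithmetic norm: since $p$ is totally ramified in $k_\infty/k$, ${\rm N}_{k_n/k_{n_0}}$ sends each prime of $S_{k_n}$ to the prime of $S_{k_{n_0}}$ below it, hence maps $\cl_{k_n}(S_{k_n})$ \emph{onto} $\cl_{k_{n_0}}(S_{k_{n_0}})$, which by the hypothesis and (ii) equals $\Cl_{k_{n_0}}^{G_{n_0}}$ of order $\order{\mathcal T}_k$. Thus $\order \cl_{k_n}(S_{k_n})\geq \order{\mathcal T}_k=\order \Cl_{k_n}^{G_n}$, forcing $\cl_{k_n}(S_{k_n})=\Cl_{k_n}^{G_n}$ for all $n\geq n_0$, and Th\'eor\`eme \ref{green} then gives $\lambda=\mu=0$. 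This is exactly the paper's step ($\varepsilon$); without it your ``squeeze'' of cardinalities does not close.
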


\begin{proof} 
Nous supposerons dans la suite que tout nombre \'etranger \`a $p$ est de fait congru \`a $1$
modulo $p$ (quitte \`a l'\'elever \`a la puissance $p-1$).

\smallskip
Soit $\varepsilon \in E_k \cap {\rm N}_{k_n/k}( k_n^\times)$~; par cons\'equent on a 
$\delta_{\mathfrak p}(\varepsilon) \geq n$ pour tout ${\mathfrak p} \mid p$ 
(Corollaire \ref{fp}). Donc $\varepsilon = u^{p^n}$, o\`u $u \in U_k^*$ est unique.

\smallskip
($\alpha$) (d\'efinition de $\psi_n$). Soit $\psi_n$ l'application qui \`a 
$\varepsilon \in E_k \cap {\rm N}_{k_n/k}( k_n^\times)$ 
associe la classe de $u$ dans $U_k^*/ \overline E_k$. 
L'application $\psi_n$ est bien d\'efinie.

\smallskip
($\beta$) (calcul de ${\rm Ker}(\psi_n)$). Si $u_0 \in \overline E_k$, alors
$\varepsilon \in (\overline E_k)^{p^n}$ est arbitrairement proche d'un \'el\'ement de $E_k^{p^n}$~;
en effet, $\varepsilon= \varepsilon'^{p^n}_N \cdot u_N$, $\varepsilon'_N \in E_k$, $u_N \to 1$ 
dans $E_k$ pour $N \to \infty$ (e.g., $u_N \equiv 1\pmod {p^{N+1}}$), d'o\`u $u_N \in E_k^{p^n}$
(conjecture de Leopoldt \cite[Th\'eor\`eme III.3.6.2 (iv)]{Gra1} en lien avec
la constante de Kummer--Leopoldt \cite{AN}, \cite{Gra7})~; d'o\`u $\varepsilon  \in E_k^{p^n}$ 
et ${\rm Ker}(\psi_n) = E_k^{p^n}$. A ce stade, puisque $(E_k : E_k^{p^n}) = p^{n \cdot (d -1)}$, 
on obtient pour tout $n$ les divisibilit\'es du (i).

\smallskip
($\gamma$) (surjectivit\'e pour $n \geq e$). 
Soit $u \in U_k^*$ et \'ecrivons que $u^{p^{e}} \in \overline E_k$~; 
pour tout $N \gg 0$, il existe $\varepsilon'_N \in E_k$ tel que 
$u^{p^{e}} = \varepsilon'_N \cdot u_N$, $u_N \to 1$. 
D'o\`u $u_N = u_1^{p^{N}} =(u_1^{p^{e}})^{p^{N-e}}$,
$u_1 \in U_k^*$ et $u_1^{p^{e}} =: \overline\varepsilon \in \overline E_k$ par d\'efinition de $e$.
Donc $u_N  = \overline \varepsilon'{}^{p^{e}}$ pour $N \gg 0$,
auquel cas $u^{p^{e}} = \varepsilon'_N \cdot \overline\varepsilon'{}^{p^{e}}$~; 
par cons\'equent il existe $u' = u \cdot \overline\varepsilon'{}^{-1}$ 
dans la classe de $u'$ donc de $u$ modulo $\overline E_k$ tel que $u'{}^{p^{e}} = \varepsilon'_N$.
Posons $\varepsilon := \varepsilon'^{p^{n-e}}_N = u'{}^{p^n}$ (car $n \geq e$)~; 
\'etant partout norme locale, $\varepsilon \in  {\rm N}_{k_n/k}( k_n^\times)$ et $\psi_n(\varepsilon)$
est la classe de $u$ modulo $\overline E_k$.

\smallskip
D'o\`u la surjectivit\'e de $\psi_n$, puis
$\ds \frac{p^{n \cdot (d -1)}}{(E_k : E_k \cap {\rm N}_{k_n/k}( k_n^\times))} 
= \order  (U_k^*/ \overline E_k) = R_k$, et finalement 
$\ds \order  \Cl_{k_n}^{G_n} = \order  {\mathcal T}_k$.

\smallskip
($\delta$) (calcul direct de l'ordre pour $n \geq e$). Puisque $p^e$ annule 
$U_k^*/ \overline E_k$, on a pour $n \geq e$, 
$U_k^*/ \overline E_k = U_k^*/ U_k^*{}^{p^n} \overline E_k$ et la suite exacte~:
$$1 \too \overline E_k / \overline E_k \cap U_k^*{}^{p^n}  \tooo U_k^*/ U_k^*{}^{p^n}
\tooo U_k^*/ U_k^*{}^{p^n} \overline E_k \too 1. $$

On a $\overline E_k / \overline E_k \cap U_k^*{}^{p^n} =
E_k / E_k \cap U_k^*{}^{p^n}$~; or $E_k \cap U_k^*{}^{p^n} = 
E_k / E_k \cap {\rm N}_{k_n/k}( k_n^\times)$~;
d'o\`u le r\'esultat puisque $\order  (U_k^*/ U_k^*{}^{p^n}) = p^{n \cdot (d -1)}$.

\smallskip
($\varepsilon$) (effectivit\'e du crit\`ere de Greenberg). Posons $M_1^n := \Cl_{k_{n}}^{G_{n}}$ 
pour tout $n \geq 0$~; alors $\order M_1^n =\order {\mathcal T}_k$ pour tout $n \geq e$. 
Soit $n \geq n_0$ et posons 
$M'_1 := \cl_{k_{n}}(S_{k_{n}}) \subseteq M_1^n$~; par la norme ${\rm N}' :=  {\rm N}_{k_n/k_{n_0}}$
il vient ${\rm N}' (M'_1) = M_1^{n_0}$, car ${\rm N}' (S_{k_{n}}) = S_{k_{n_0}}$. En raison des ordres,
il en r\'esulte que ${\rm N}'$ induit un isomorphisme $M'_1 \simeq M_1^{n_0}$, d'o\`u 
$M'_1 = M_1^n = \cl_{k_{n}}(S_{k_{n}})$ pour tout $n \geq n_0$, d'o\`u $\lambda=\mu=0$ 
(Th\'eor\`eme \ref{green}).
\end{proof}

\begin{theorem} \label{constant} Soit $k$ Galoisien r\'eel de degr\'e $d$ dans 
lequel $p > 2$ est totalement d\'ecompos\'e. On suppose que la 
conjecture de Leopoldt est vraie pour $p$ dans $k$.
Soit ${\mathcal T}_k$ le groupe de torsion du groupe de Galois de la pro-$p$-extension 
Ab\'elienne $p$-ramifi\'ee maximale $H_k^{\rm pr}$ de $k$ et soit $p^{e}$
l'exposant de $U_k^*/ \overline E_k$. 
Soit $H_{k_n/k}$ le $p$-corps des genres de $k_n$ (cf. \S\,\ref{seg}).

\smallskip
(i) On a $[H_{k_n/k} : k_n] = \order  {\mathcal T}_k$, pour tout $n\geq {e}$. 

\smallskip
(ii) On a $k_\infty H_{k_{e}/k} = H_k^{\rm pr}$ et  l'extension $H_k^{\rm pr}/ k_\infty$ 
est non ramifi\'ee.\footnote{Ce r\'esultat est d\'emontr\'e dans 
\cite[Theorem 1.1, Lemma 2.3]{T2} par introduction du corps des genres. 
Voir aussi les approches de \cite{J6} et \cite{Ng3}.}

\smallskip
(iii) Soit $\Lambda \supseteq E_k $ un sous-groupe de $k^\times$
tel que tout $x \in \Lambda$ soit norme locale dans $k_n/k$ en dehors 
de $p$, pour un entier $n \geq 0$ donn\'e (i.e., $(x)$ norme d'un id\'eal de $k_n$). 
Alors $\ds \frac{p^{n \cdot (d -1)}}{(\Lambda : \Lambda \cap {\rm N}_{k_{n}/k}( k_{n}^\times))}$
divise $R_k$. 

Pour $E_k \subseteq \Lambda \subseteq E_k^{S_k}$, cette divisibilit\'e a lieu pour tout $n$.
\end{theorem}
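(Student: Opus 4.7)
Le plan est de ramener les trois points \`a la suite exacte des genres \eqref{genera} coupl\'ee au Th\'eor\`eme \ref{regulateur}, les points (i) et (iii) se r\'eduisant \`a un calcul direct d'indices, tandis que le point (ii) mobilisera la structure de ${\rm Gal}(H_k^{\rm pr}/k)$ sous la conjecture de Leopoldt.

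Pour (i), j'appliquerai \eqref{genera} avec $K=k_n$ pour obtenir $[H_{k_n/k}:k_n H_k] = p^{n(d-1)}/(E_k:E_k\cap {\rm N}_{k_n/k}(k_n^\times))$, quotient \'egal \`a $R_k$ d\`es que $n\geq e$ par le Th\'eor\`eme \ref{regulateur} (ii). Le facteur compl\'ementaire $[k_n H_k : k_n] = \order \Cl_k$ provient de $k_n\cap H_k=k$ ($k_n/k$ totalement ramifi\'ee en $p$, $H_k/k$ non ramifi\'ee), et la relation \eqref{T} donne $[H_{k_n/k}:k_n]= \order \Cl_k\cdot R_k = \order {\mathcal T}_k$. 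Pour (iii), j'identifierai de m\^eme $p^{n(d-1)}/(\Lambda:\Lambda\cap {\rm N}_{k_n/k}(k_n^\times))$ \`a $[\Omega(k_n/k):\omega_n(\Lambda)]$ via l'application $\omega_n$ du \S\,\ref{seg}: l'hypoth\`ese \guillemotleft $(x)$ norme d'un id\'eal de $k_n$\guillemotright{} \'equivaut \`a $\omega_n(x)\in \Omega(k_n/k)$, et le noyau de $\omega_n$ sur $\Lambda$ est $\Lambda\cap {\rm N}_{k_n/k}(k_n^\times)$ par le th\'eor\`eme des normes de Hasse. L'inclusion $E_k\subseteq \Lambda$ entra\^ine que cet indice divise $[\Omega(k_n/k):\omega_n(E_k)]$, lequel divise $R_k$ par le Th\'eor\`eme \ref{regulateur} (i). Pour $\Lambda\subseteq E_k^{S_k}$, l'hypoth\`ese est automatique \`a tout niveau: une $S_k$-unit\'e est norme locale aux places non ramifi\'ees (\'etrang\`eres \`a $p$).

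Le point (ii) constitue l'\'etape la plus d\'elicate et proc\`edera par comparaison de degr\'es au-dessus de $k_\infty$. D'abord, $H_{k_e/k}/k$ est Ab\'elienne $p$-ramifi\'ee (composition d'une extension $p$-ramifi\'ee et d'une extension non ramifi\'ee), d'o\`u $k_\infty H_{k_e/k}\subseteq H_k^{\rm pr}$. Sous Leopoldt et dans le cas totalement r\'eel, ${\rm Gal}(H_k^{\rm pr}/k)$ est de $\Z_p$-rang $r_2+1=1$ avec sous-groupe de torsion ${\mathcal T}_k$; la suite $1\to {\rm Gal}(H_k^{\rm pr}/k_\infty)\to {\rm Gal}(H_k^{\rm pr}/k)\to {\rm Gal}(k_\infty/k)\to 1$ se scinde alors (car $\Z_p$ est libre) et fournit $[H_k^{\rm pr}:k_\infty] = \order {\mathcal T}_k$. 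Parall\`element, $H_{k_e/k}/k_e$ \'etant non ramifi\'ee et $k_\infty/k_e$ totalement ramifi\'ee en $p$, on a $H_{k_e/k}\cap k_\infty=k_e$, d'o\`u $[k_\infty H_{k_e/k}:k_\infty] = [H_{k_e/k}:k_e] = \order {\mathcal T}_k$ par (i). L'\'egalit\'e des deux degr\'es force $k_\infty H_{k_e/k}=H_k^{\rm pr}$, et la non-ramification au-dessus de $k_\infty$ d\'ecoule directement de celle de $H_{k_e/k}/k_e$. L'obstacle essentiel est donc le calcul de $[H_k^{\rm pr}:k_\infty]$, qui repose crucialement sur la conjecture de Leopoldt et la stabilisation $n\geq e$ du Th\'eor\`eme \ref{regulateur}.
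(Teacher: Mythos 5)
Votre preuve est correcte et emprunte pour l'essentiel la m\^eme voie que le texte : le point (iii) est identique (injections $E_k/E_k\cap{\rm N}_{k_n/k}(k_n^\times) \hookrightarrow \Lambda/\Lambda\cap{\rm N}_{k_n/k}(k_n^\times) \hookrightarrow \Omega(k_n/k)$ via la formule du produit et le th\'eor\`eme des normes de Hasse, puis Th\'eor\`eme \ref{regulateur} (i)), et le point (i) revient au m\^eme calcul d'indice normique, organis\'e via la suite exacte des genres \eqref{genera} et le Th\'eor\`eme \ref{regulateur} (ii), combin\'e \`a $k_n\cap H_k=k$ et \`a la relation \eqref{T}, l\`a o\`u le texte cite directement $[H_{k_n/k}:k_n]=\order \Cl_{k_n}^{G_n}$. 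Pour (ii), vous explicitez ce que le texte laisse implicite (comparaison des degr\'es au-dessus de $k_\infty$ \`a l'aide de ${\rm Gal}(H_k^{\rm pr}/k)\simeq \Z_p\times{\mathcal T}_k$ sous Leopoldt, donc $[H_k^{\rm pr}:k_\infty]=\order{\mathcal T}_k$, et de $H_{k_e/k}\cap k_\infty=k_e$ par ramification), ce qui est exactement l'argument attendu.
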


\begin{proof}
(i) Consid\'erons les $p$-corps des genres $H_{k_e/k}$ et $H_{k_{n}/k}$~; 
on a $H_{k_e/k} k_{n}\subseteq H_{k_{n}/k}$ en raison 
de la totale ramification de $p$ dans $k_\infty/k$ et de l'Ab\'elianit\'e 
de $H_{k_e/k} k_{n}/k$. D'o\`u la valeur
stationnaire du degr\'e $[H_{k_{n}/k} : k_{n}]$ puisque $[H_{k_n/k} : k_n] = 
\order  \Cl_{k_n}^{G_n}$ et que $\order  \Cl_{k_n}^{G_n} =  \order  {\mathcal T}_k$ pour tout $n \geq e$
d'apr\`es le Th\'eor\`eme \ref{regulateur} (ii). D'o\`u (i) et par cons\'equent (ii).

\smallskip
(iii) On a les injections canoniques
$$E_k/E_k \cap {\rm N}_{k_{n}/k}(k_{n}^\times) \hookrightarrow \Lambda /\Lambda  \cap 
{\rm N}_{k_{n}/k}(k_{n}^\times)  \hookrightarrow \Omega(k_{n}/k) \simeq (\Z/p^n \Z)^{d-1},$$
car les symboles de Hasse en $p$ (pour $k_n/k$) des $x \in \Lambda$ v\'erifient la formule du produit 
par hypoth\`ese.

\smallskip
Donc $\ds \frac{p^{n \cdot (d -1)}} {(\Lambda : \Lambda \cap {\rm N}_{k_{n}/k}( k_{n}^\times))}\,$ 
divise $\, \ds \frac{p^{n \cdot (d -1)}} {(E_k : E_k \cap {\rm N}_{k_{n}/k}( k_{n}^\times))},\,$ 
lequel divise $R_k$ (Th\'eor\`eme \ref{regulateur} (i)).
\end{proof}

\begin{remark}\label{rematheta}
 La th\'eorie des genres dit que  l'image par $\pi_n$ de $\Omega(k_n/k)$ est 
${\rm Gal}(H_{k_n/k}/k_n H_k)$ et que le noyau de $\pi_n$ est
$\omega_n(E_k) \subseteq \Omega(k_n/k), $
o\`u $\omega_n(\varepsilon) = \big( \big( \frac{\varepsilon \, ,\, k_n/k}
{{\mathfrak p}} \big) \big)_{{\mathfrak p} \mid p}$ pour tout $\varepsilon \in E_k$
et  $\order  \omega_n(E_k) = (E_k : E_k \cap {\rm N}_{k_n/k}( k_n^\times))$~;
en introduisant le sous-groupe ${\mathcal N}_k^n$ des $x \in k^\times$ partout 
norme locale dans $k_n/k$ en dehors de $p$, on a
$\Omega(k_n/k) = \omega_n({\mathcal N}_k^n) \simeq (\Z/p^n \Z)^{d-1}$, 
et $\pi_n(\Omega(k_n/k)) = \pi_n \circ \omega_n ({\mathcal N}_k^n) = 
{\rm Gal}(H_{k_n/k}/k_n H_k)$. Ainsi, pour tout $n \geq 0$ fix\'e, l'application~:
$$\theta_n := \pi_n \circ \omega_n\  : \  {\mathcal N}_k^n 
\mathop{\tooo}^{\omega_n} \Omega(k_n/k) \mathop{\tooo}^{\pi_n} {\rm Gal}(H_{k_n/k}/k_n H_k)$$

est surjective de ${\mathcal N}_k^n$ sur ${\rm Gal}(H_{k_n/k}/k_n H_k)$ et
de noyau $E_k \cdot {\rm N}_{k_n/k}(k_n^\times)$.

\smallskip
Par cons\'equent, l'image par $\theta_n$ d'un sous-groupe $\Lambda$ de ${\mathcal N}_k^n$ 
contenant $E_k$ est un sous-groupe de ${\rm Gal}(H_{k_n/k}/k_n H_k)$ isomorphe 
\`a $\omega_n(\Lambda)/\omega_n(E_k)$, o\`u $\order  \omega_n(\Lambda) = 
(\Lambda : \Lambda \cap {\rm N}_{k_n/k}( k_n^\times))$. 
Autrement dit, on a la suite exacte~:
\begin{equation*}
\begin{aligned}
1 \to E_k/E_k \cap {\rm N}_{k_n/k}( k_n^\times) \tooo 
\Lambda / \Lambda \cap  {\rm N}_{k_n/k} & ( k_n^\times) \\ 
\mathop{\tooo}^{\theta_n} \theta_n  (\Lambda) & \subseteq {\rm Gal}(H_{k_n/k}/k_n H_k) \to 1.
\end{aligned} 
\end{equation*}

L'image par $\pi_n$ d'un sous-groupe de $\bigoplus_{{\mathfrak p} \mid p} 
I_{\mathfrak p}(k_n/k)$, non contenu dans $\Omega(k_n/k)$, n'est pas 
n\'ecessairement dans ${\rm Gal}(H_{k_n/k}/k_n H_k)$ par absence de la formule 
du produit sur les $p$-places.
\end{remark}

\begin{theorem} \label{theta}  Soit ${\mathcal N}_k^n$, pour $n\geq 0$ fix\'e,
le sous-groupe des $x \in k^\times$ partout norme locale en dehors 
de $p$ dans $k_n/k$ (i.e., $(x)$ norme d'un id\'eal de $k_n$) et soit 
$\theta_n := \pi_n \circ \omega_n$ (cf. \S\,\ref{seg} et Remarque \ref{rematheta}).

\smallskip
(i) Pour tout sous-groupe $\Lambda$ de ${\mathcal N}_k^n$ contenant 
$E_k$, $\theta_n (\Lambda)$ est isomorphe \`a un sous-groupe de 
${\rm Gal}(H_{k_n/k}/k_n H_k)$, lequel est isomorphe \`a un quotient de
${\rm Gal}(H_k^{\rm pr} /k_\infty H_k)  \simeq U_k^*/ \overline E_k  
\subseteq {\mathcal T}_k$ et $\order  \theta_n (\Lambda)$ divise $R_k$. 

\smallskip
Pour $r \leq n$,  $\order \theta_r (\Lambda)$ divise $\order \theta_n(\Lambda)$.

\smallskip
(ii) Si $n \geq e$, o\`u $p^e$ est l'exposant de $U_k^*/\overline E_k$, alors
$\theta_n(\Lambda) \simeq \theta_e(\Lambda)$.

\smallskip
(iii) Si $E_k \subseteq \Lambda \subseteq E_k^{S_k}$, alors les $\order \theta_n (\Lambda)$
forment une $n$-suite croissante (stationnaire \`a partir de $n=e$) de diviseurs de $R_k$.
\end{theorem}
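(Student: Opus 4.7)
Notre plan est de r\'eduire les trois parties \`a l'identification structurelle, issue du Th\'eor\`eme \ref{constant}, du groupe ${\rm Gal}(H_{k_n/k}/k_n H_k)$ comme quotient de ${\rm Gal}(H_k^{\rm pr}/k_\infty H_k) \simeq U_k^*/\overline E_k$ (d'ordre $R_k$), puis de suivre $\theta_n$ le long des restrictions entre les \'etages $k_r \subseteq k_e \subseteq k_n$. Pour (i), l'inclusion $\theta_n(\Lambda) \subseteq {\rm Gal}(H_{k_n/k}/k_n H_k)$ d\'ecoulera par construction de $\theta_n = \pi_n \circ \omega_n$ et de la suite exacte des genres \eqref{genera}; comme $H_{k_n/k}/k$ est Ab\'elienne et $p$-ramifi\'ee, on a $H_{k_n/k} \subseteq H_k^{\rm pr}$, et puisque $H_k \subseteq H_{k_n/k}$, la restriction \`a $H_{k_n/k}$ d'un automorphisme de $H_k^{\rm pr}$ fixant $k_\infty H_k$ fixera encore $k_n H_k$, d'o\`u la surjection ${\rm Gal}(H_k^{\rm pr}/k_\infty H_k) \to {\rm Gal}(H_{k_n/k}/k_n H_k)$; combin\'ee \`a l'isomorphisme ${\rm Gal}(H_k^{\rm pr}/k_\infty H_k) \simeq U_k^*/\overline E_k$ du \S\,\ref{torsion} et \`a $\order (U_k^*/\overline E_k) = R_k$ issu de \eqref{T}, ceci donnera la divisibilit\'e de $\order \theta_n(\Lambda)$ par $R_k$. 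Pour la comparaison $\order \theta_r(\Lambda)\mid \order \theta_n(\Lambda)$ avec $r \leq n$, on remarquera d'abord que $\Lambda \subseteq {\mathcal N}_k^n$ entra\^ine $\Lambda \subseteq {\mathcal N}_k^r$ (en appliquant ${\rm N}_{k_n/k_r}$ \`a tout id\'eal de $k_n$ de norme $(x)$), puis on utilisera la fonctorialit\'e du symbole de Hasse sous la restriction ${\rm Gal}(k_n/k) \to {\rm Gal}(k_r/k)$ ainsi que la compatibilit\'e de $\pi_n$ et $\pi_r$ avec la restriction de $H_{k_n/k}$ \`a $H_{k_r/k}$; une chasse au diagramme identifiera alors $\theta_r(\Lambda)$ \`a l'image de $\theta_n(\Lambda)$ par cette restriction (surjective).

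Pour (ii), la d\'emonstration du Th\'eor\`eme \ref{constant}(i), jointe \`a la totale ramification des places au-dessus de $p$ dans $k_\infty/k$ (et au fait que $H_{k_e/k}/k_e$ est non ramifi\'ee), donnera $k_n H_{k_e/k} = H_{k_n/k}$ et $k_n \cap H_{k_e/k} = k_e$, de sorte que la restriction fournira un isomorphisme ${\rm Gal}(H_{k_n/k}/k_n) \simeq {\rm Gal}(H_{k_e/k}/k_e)$, qui descendra en un isomorphisme ${\rm Gal}(H_{k_n/k}/k_n H_k) \simeq {\rm Gal}(H_{k_e/k}/k_e H_k)$. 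La m\^eme fonctorialit\'e qu'en (i) montrera que cet isomorphisme envoie $\theta_n(x)$ sur $\theta_e(x)$ pour tout $x \in \Lambda$, d'o\`u $\theta_n(\Lambda) \simeq \theta_e(\Lambda)$.

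Pour (iii), on v\'erifiera que $E_k \subseteq \Lambda \subseteq E_k^{S_k}$ implique $\Lambda \subseteq {\mathcal N}_k^n$ pour tout $n$: toute $S_k$-unit\'e a une valuation nulle en chaque ${\mathfrak q} \nmid p$, o\`u $k_n/k$ est non ramifi\'ee, donc y est automatiquement norme locale; et la totale ramification de chaque ${\mathfrak p} \mid p$ dans $k_n/k$ exhibe $(x) = \prd_{{\mathfrak p} \mid p} {\mathfrak p}^{v_{\mathfrak p}(x)}$ comme norme arithm\'etique de $\prd_{{\mathfrak P} \mid {\mathfrak p}} {\mathfrak P}^{v_{\mathfrak p}(x)}$. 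Les points (i) et (ii) s'appliqueront alors uniform\'ement et donneront la suite croissante, stationnaire \`a partir de $n = e$, de diviseurs de $R_k$. Le point d\'elicat sera la v\'erification du carr\'e commutatif reliant $\pi_n$ et $\pi_r$ via la restriction: il faudra s'assurer que les rel\`evements $s'_{\mathfrak p} \in I_{\mathfrak p}(H_{k_n/k}/k)$ d'un symbole de Hasse se restreignent en des rel\`evements l\'egitimes dans $I_{\mathfrak p}(H_{k_r/k}/k)$, ce qui traduit essentiellement la naturalit\'e du symbole de restes normiques de Hasse dans les tours, mais qui m\'erite d'\^etre explicit\'ee soigneusement.
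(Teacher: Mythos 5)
Votre d\'emarche est correcte et co\"incide pour l'essentiel avec celle du texte~: on y utilise de m\^eme l'inclusion $H_{k_r/k} \subseteq H_{k_n/k}$ et la surjection canonique ${\rm Gal}(H_{k_n/k}/k_n H_k) \to {\rm Gal}(H_{k_r/k}/k_r H_k)$, compatible avec les $\theta_n$ par consistance des symboles de Hasse, puis le Th\'eor\`eme \ref{constant} pour l'isomorphisme d\`es $n \geq e$, la divisibilit\'e par $R_k$ venant de ce que ${\rm Gal}(H_{k_n/k}/k_n H_k)$ est quotient de ${\rm Gal}(H_k^{\rm pr}/k_\infty H_k) \simeq U_k^*/\overline E_k$. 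Le ``point d\'elicat'' que vous signalez (commutativit\'e du carr\'e reliant $\pi_n$ et $\pi_r$) est pr\'ecis\'ement ce que le texte r\`egle en \'ecrivant $\theta_n(x)$ comme produit des symboles $\big(\frac{x\,,\,H_{k_n/k}/H_k}{{\mathfrak p}}\big)$, dont les restrictions \`a $H_{k_r/k}$ donnent $\theta_r(x)$~; vos compl\'ements (descente $\Lambda \subseteq {\mathcal N}_k^r$ pour $r \leq n$, argument de compositum en (ii)) ne font qu'expliciter ce que le texte laisse implicite.
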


\begin{proof}
Soit $r \leq n$. Comme $H_{k_{r}/k} \subseteq H_{k_{n}/k}$, pour ${\mathfrak p} \mid p$,
les symboles $\ds \Big(\frac{x \, ,\, H_{k_{r}/k}/ H_k}{{\mathfrak p}} \!\Big)$
sont les restrictions des $\ds\Big(\frac{x \, ,\, H_{k_{n}/k} / H_k} {{\mathfrak p}} \Big)$,
et l'image de $\theta_{n}(x) =  \prd_{{\mathfrak p} \mid p} \ds
\Big(\frac{x \, ,\, H_{k_{n}/k}/ H_k}{{\mathfrak p}} \!\Big)$ dans la surjection canonique~:
$${\rm Gal}(H_{k_{n}/k} /k_{n} H_k) \too \hspace{-0.5cm} \too  {\rm Gal}(H_{k_{r}/k}/k_{r} H_k)$$

est $\theta_{r}(x)$, et par cons\'equent on a une surjection $\theta_{n}(\Lambda) 
\too \hspace{-0.5cm} \too \theta_{r}(\Lambda)$,
d'o\`u la relation entre les ordres.
Si $n \geq e$, la surjection devient l'isomorphisme~:
$${\rm Gal}(H_{k_{n}/k} /k_{n} H_k) \simeq {\rm Gal}(H_{k_e/k}/k_e H_k)$$ 

du Th\'eor\`eme \ref{constant} (i), et $\theta_n(\Lambda)$ est isomorphe \`a  
$\theta_{e}(\Lambda)$ d'o\`u le r\'esultat. 

\smallskip
Si $E_k \subseteq \Lambda \subseteq E_k^{S_k}$, 
ce qui pr\'ec\`ede est valable quel que soit $n$
\end{proof}

\begin{remarks}\label{tor=1}
 (i) Certains r\'esultats de th\'eorie d'Iwasawa donnent des 
isomorphismes au niveau infini mettant en jeu le groupe ${\mathcal T}_k$ 
(cf. \cite[Lemma 4.7]{LMN},  \cite{T1},  \cite{T2}, \cite{OT}, \cite[Lemme 3.1]{BaN}), mais
en r\'ealit\'e, il y a r\'egularisation \`a un niveau fini explicite ne d\'ependant que
du r\'egulateur $p$-adique de $k$. 

\smallskip
(ii) Si ${\mathcal T}_k = 1$ (i.e., $k$ est $p$-rationnel, \cite[\S\,IV.3]{Gra1}, 
\cite{GJ}, \cite{JN}, \cite{MN}), on a \'evidemment 
$\lambda = \mu = \nu = 0$~; ceci s'applique par exemple au
corps cubique \'etudi\'e dans \cite{T3} pour $p=5$.

\smallskip
On trouvera dans \cite{Gra6} une \'etude d\'etaill\'ee des r\'egulateurs $p$-adiques
qui repr\'esentent le facteur crucial puisque $\Cl_k$ est non trivial uniquement pour un 
nombre fini de $p$ tandis que c'est seulement conjectur\'e pour $R_k$.

\smallskip
(iii) On a ${\mathcal T}_k = 1$ si et seulement si $\Cl_k = R_k = 1$, et alors
(sous la conjecture de Leopoldt dans la tour), la formule de points fixes donnant 
$\order  {\mathcal T}_{k_n}^{G_n}$ (\cite[Th\'eor\`eme IV.3.3, \S\,IV\,(b)]{Gra1},
\cite{MN}) conduit \`a ${\mathcal T}_{k_n}=1$ car $k_n/k$ est ``$p$-primitivement 
ramifi\'ee'' ($p$-rationalit\'e dans la tour). D'o\`u $\Cl_{k_n} = 1$ et 
$R_{k_n} = 1$ pour tout $n \geq 0$.

\smallskip
(iv) Soit $p^{\wt e}$ l'exposant de ${\mathcal T}_k$.
Soit $I_k$ le groupe des id\'eaux de $k$ \'etrangers \`a $p$ et, pour $n\geq \wt e$, soit~:
$$\wt E_k^n := \{x \in k^\times, \,  (x) \in I_k^{p^n}\}. $$

Alors on peut \'etablir 
(en utilisant les techniques du \S\,\ref{resglob}) une suite exacte de la forme~:
$$1 \too \{x \in k^\times, \, \hbox{$x$ \'etranger \`a $p$} \}^{p^n}
\tooo \wt E_k^n \cap {\rm N}_{k_n/k}( k_n^\times) \ds
\mathop{\tooo}^{\wt \psi_n} {\mathcal T}_k \too 1. $$
\end{remarks}

\begin{remarks}
(i) Une $S_k$-unit\'e est norme locale en dehors de $p$ dans $k_n/k$ pour tout $n\geq 0$, 
tandis que pour $x \notin E_k^{S_k}$ ceci n'est pas possible~;
en effet, supposons $(x)$ norme d'un id\'eal ${\mathfrak A}$ de $k_n$, pour tout $n$, 
et posons~:
$$(x) = {\rm N}_{k_n/k}({\mathfrak A})=
 {\mathfrak a} \cdot  \prd_{{\mathfrak p} \mid p} {\mathfrak p}^{c_{\mathfrak p}}, $$

${\mathfrak a}$ \'etranger \`a $p$, $c_{\mathfrak p} \geq 0$~; 
si ${\mathfrak l}^{c_{\mathfrak l}}$ est la composante 
${\mathfrak l}$-primaire de ${\mathfrak a}$ pour l'id\'eal premier ${\mathfrak l}$, pour un id\'eal 
premier ${\mathfrak L} \mid {\mathfrak l}$ de $k_n$, il existe $\tau_n \in \Z[G_n]$ tel que~:
$${\mathfrak l}^{c_{\mathfrak l}} = {\rm N}_{k_{n}/k}({\mathfrak L}^{\tau_n}) =
{\mathfrak l}^{f_{\mathfrak l}^n \cdot \tau_n(1)}, $$ 

o\`u $f_{\mathfrak l}^n$ est le degr\'e r\'esiduel de ${\mathfrak l}$ dans $k_{n}/k$ 
et $\tau_n(1) \in \Z$ est l'image de $\tau_n$ par l'application d'augmentation. 
Les $f_{\mathfrak l}^n$ \'etant strictement 
croissants pour $n \gg 0$, ceci est impossible sauf si 
$\tau_n(1)=0$, auquel cas $c_{\mathfrak l} = 0$ et $x$ est une $S_k$-unit\'e.

\smallskip
Par cons\'equent, si $x \notin E_k^{S_k}$, il n'y a pas de formule du produit 
portant uniquement sur les $p$-places dans $k_n/k$ pour $n$ au-del\`a
d'une certaine valeur d\'ependant des degr\'es r\'esiduels des ${\mathfrak l} \mid (x)$
dans $k_\infty/k$.

\smallskip
(ii) Le cadre de la conjecture de Greenberg et l'introduction de la filtration des groupes de 
classes $\Cl_{k_n}$ par les $\cl_{k_n}({\mathcal I}_i^n)$ (${\mathcal I}_i^n$ \'etrangers \`a $p$)
qui sont les sous-groupes des classes annul\'ees par $(1-\sigma_n)^i$, $0 \leq i \leq m_n-1$,
mettra en jeu les groupes
$\Lambda_i^n = \{ x \in k^\times, \ \ (x) \in  {\rm N}_{k_{n}/k} ({\mathcal I}_i^n) \}$
et le calcul des 
$(\Lambda_i^n : \Lambda_i^n \cap {\rm N}_{k_{n}/k} (k_{n}^\times))$. Ceci fait 
l'objet de la Section \ref{quant}.

\smallskip
Donc $\Lambda_i^n$ ne contiendra pas de $S_k$-unit\'es autres que les unit\'es et
l'aspect algorithmique se ram\`enera \`a l'\'etude des $x \in \Lambda_i^n$ et de leurs 
symboles normiques donn\'es par leurs quotients de Fermat en les ${\mathfrak p} \mid p$, 
ce qui constitue une approche diff\'erente de la conjecture de Greenberg. 
\end{remarks}

\section{Aspects num\'eriques -- Corps quadratiques r\'eels}\label{quad}\label{sect5}

Soient ${\mathfrak p}_1$ et ${\mathfrak p}_2$ les deux id\'eaux premiers de $k=\Q(\sqrt m)$
au-dessus de $p$ et $S_k$ leur ensemble. Les $S_k$-unit\'es g\'en\'eratrices modulo les unit\'es
sont donn\'ees par $\pi_1$ et sa $\Q$-conjugu\'ee $\pi_2$, et sont telles que 
$(\pi_j) = {\mathfrak p}_j^{h_0}$ o\`u $h_0$ est l'ordre de la classe des ${\mathfrak p}_j$, $j=1, 2$.

\begin{lemma}\label{norme1} Soit $\alpha \in k^\times$ \'etranger \`a $p$ tel que
${\rm N}_{k/\Q}(\alpha)^{p-1} \equiv 1\!\! \pmod {p^{n+1}}$, pour $n \geq 1$~; 
alors on a (cf.  D\'efinition \ref{delta})
$\delta_{\mathfrak p_1}(\alpha) = \delta_{\mathfrak p_2}(\alpha)$ ou bien 
$\delta_{\mathfrak p_1}(\alpha) \geq n\ \  \&\ \  \delta_{\mathfrak p_2}(\alpha) \geq n$.
Donc si $\alpha^{p-1}\not \equiv 1 \pmod {p^{n+1}}$, on a
$\alpha^{p-1} = 1 +p \cdot p^{\delta_p(\alpha)} \! \cdot \beta$, avec
$\delta_p(\alpha)<n$ et $\beta$ \'etranger \`a $p$.
\end{lemma}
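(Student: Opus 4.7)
Mon plan consiste \`a traduire l'hypoth\`ese globale $N_{k/\Q}(\alpha)^{p-1} \equiv 1 \pmod{p^{n+1}}$ en contraintes sur le couple $(\delta_1, \delta_2) := (\delta_{\mathfrak p_1}(\alpha), \delta_{\mathfrak p_2}(\alpha))$, en exploitant la sym\'etrie galoisienne et l'in\'egalit\'e ultram\'etrique en chaque place au-dessus de $p$.

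Pr\'ecis\'ement, je poserais $A := \alpha^{p-1} - 1$ et noterais $\tau$ le g\'en\'erateur non trivial de ${\rm Gal}(k/\Q)$, qui \'echange $\mathfrak p_1$ et $\mathfrak p_2$. Alors $\tau(\alpha)^{p-1} - 1 = \tau(A)$, donc $N_{k/\Q}(\alpha)^{p-1} - 1 = (1+A)(1+\tau(A)) - 1 = A + \tau(A) + A \tau(A)$. La D\'efinition \ref{delta} fournit $v_{\mathfrak p_1}(A) = \delta_1 + 1$, et comme $\tau$ \'echange les deux id\'eaux premiers au-dessus de $p$, $v_{\mathfrak p_1}(\tau(A)) = v_{\mathfrak p_2}(A) = \delta_2 + 1$. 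Le terme crois\'e a pour valuation $v_{\mathfrak p_1}(A\tau(A)) = \delta_1 + \delta_2 + 2$, qui est strictement sup\'erieure \`a $\min(\delta_1, \delta_2) + 1$ puisque $\delta_j \geq 0$.

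Je distinguerais ensuite les deux cas. Si $\delta_1 = \delta_2$, la premi\`ere alternative est imm\'ediate. Si $\delta_1 \ne \delta_2$, l'in\'egalit\'e ultram\'etrique stricte (justifi\'ee par la comparaison pr\'ec\'edente) donne $v_{\mathfrak p_1}\bigl(A + \tau(A) + A\tau(A)\bigr) = \min(\delta_1, \delta_2) + 1$. L'hypoth\`ese imposant que cette valuation soit $\geq n+1$, on obtient $\min(\delta_1, \delta_2) \geq n$, d'o\`u les deux $\delta_j \geq n$, ce qui est la seconde alternative.

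Pour la derni\`ere assertion, je partirais de l'observation que $\alpha^{p-1} \not\equiv 1 \pmod{p^{n+1}}$ exclut exactement le cas ``les deux $\delta_j \geq n$'' (car $(p^{n+1}) = \mathfrak p_1^{n+1} \mathfrak p_2^{n+1}$). Par ce qui pr\'ec\`ede, cela force $\delta_1 = \delta_2 =: \delta_p(\alpha)$, et n\'ecessairement $\delta_p(\alpha) < n$. La factorisation d'id\'eaux $(\alpha^{p-1} - 1) = (p) \cdot \mathfrak p_1^{\delta_p(\alpha)} \mathfrak p_2^{\delta_p(\alpha)} \cdot \mathfrak b = (p^{\delta_p(\alpha)+1}) \cdot \mathfrak b$, avec $\mathfrak b$ \'etranger \`a $p$, donnera alors $\alpha^{p-1} = 1 + p \cdot p^{\delta_p(\alpha)} \cdot \beta$ pour un certain $\beta \in k^\times$ \'etranger \`a $p$. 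L'unique point qui demande attention, assez mineur, est la stricte in\'egalit\'e ultram\'etrique, automatique grace \`a la positivit\'e des $\delta_j$; le reste est un passage standard de l'hypoth\`ese d\'ecomposable $v_p \geq n+1$ aux deux contraintes locales $v_{\mathfrak p_j} \geq n+1$.
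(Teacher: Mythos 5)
Votre d\'emonstration est correcte et suit essentiellement la m\^eme voie que celle du texte~: d\'evelopper ${\rm N}_{k/\Q}(\alpha)^{p-1}-1 = A + \tau(A) + A\,\tau(A)$ avec $A=\alpha^{p-1}-1$ et comparer les valuations en ${\mathfrak p}_1$ (le terme crois\'e \'etant de valuation strictement sup\'erieure), le texte \'ecrivant le m\^eme calcul sous forme multiplicative au moyen d'\'el\'ements conjugu\'es $\omega_1,\omega_2$ avec $\omega_1\omega_2=p\,\beta'$ au lieu de valuations. La d\'eduction de la derni\`ere assertion, laiss\'ee implicite dans le texte, est \'egalement trait\'ee correctement.
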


\begin{proof} 
Soient $\omega_j \in k^\times$, $j=1, 2$, deux nombres conjugu\'es tels que
$v_{ {\mathfrak p}_j}(\omega_j) = 1$ et $\omega_1 \cdot \omega_2 = p \,\beta'$,
avec $\beta'$ \'etranger \`a $p$, et posons
$\alpha^{p-1} =: \alpha_1^{p-1} = 1 +p \cdot \omega_1^{\delta_{\mathfrak p_1}(\alpha)}
\omega_2^{\delta_{\mathfrak p_2}(\alpha)} \!\!\cdot \beta_1$, $\beta_1$ \'etranger \`a $p$,
et soit $\alpha_2$ le conjugu\'e de $\alpha_1$.
On a ${\rm N}_{k/\Q}(\alpha)^{p-1} = (\alpha_1\cdot \alpha_2)^{p-1}
 \equiv 1 \pmod{p^{n+1}}$~; donc on obtient~:
\begin{equation*}
\begin{aligned}
\omega_1^{\delta_{\mathfrak p_1}(\alpha)} \!\cdot 
\omega_2^{\delta_{\mathfrak p_2}(\alpha)} \!\cdot \beta_1
+  \omega_2^{\delta_{\mathfrak p_1}(\alpha)}\! \cdot &
\omega_1^{\delta_{\mathfrak p_2}(\alpha)} \!\cdot \beta_2 \\
& + p^{1+\delta_{\mathfrak p_1}(\alpha) + \delta_{\mathfrak p_2}(\alpha)} 
\cdot \beta_1\,\beta_2 \,\beta'' \equiv 0 \pmod {p^{n}} ;
\end{aligned} 
\end{equation*}

si $\delta_{\mathfrak p_1}(\alpha) < n$ ou $\delta_{\mathfrak p_2}(\alpha) < n$, il vient
n\'ecessairement $\delta_{\mathfrak p_1}(\alpha) = \delta_{\mathfrak p_2}(\alpha)$.
\end{proof}

\begin{remark}\label{remunit}
Dans cette situation, pour les corps quadratiques, les $\delta_{\mathfrak p}(\alpha)$ 
seront not\'es $\delta_p(\alpha)$. Ceci vaut pour un $\alpha  \in k^\times$ \'etranger \`a 
$p$ et partout norme locale en dehors de $p$ dans $k_n/k$ (i.e., norme d'un id\'eal de $k_n$)~; 
en effet, le conducteur de $\Q_n/\Q$ \'etant $p^{n+1}$, on a ${\rm N}_{k/\Q}(\alpha)^{p-1} 
\equiv 1\pmod {p^{n+1}}$.

\smallskip
Pour une unit\'e $\varepsilon$ de $k$, on a (ind\'ependamment de $n$) 
$\varepsilon^{p-1} = 1 +p \cdot p^{\delta_p(\varepsilon)} \! \cdot \beta$, $\delta_p(\varepsilon) \geq 0$, 
avec $\beta$ \'etranger \`a $p$.
\end{remark}

\subsection{Calcul pratique des symboles normiques des $S_k$-unit\'es} \label{delta1}

Pour le calcul des $\Big(\frac{\pi_j \, ,\, k_n/k}{{\mathfrak p}_1} \Big)$ et 
$\Big(\frac{\pi_j \, ,\, k_n/k}{{\mathfrak p}_2} \Big)$,
$j= 1, 2$, on remarque que les $\pi_j$ sont normes locales en dehors de $p$ et que la formule du 
produit permet de ne calculer que les $\Big(\frac{\pi_j \, ,\, k_n/k}{{\mathfrak p}_1} \Big)$ par exemple~; 
mais l'action de ${\rm Gal}(k/\Q)=: \{1, \tau\}$ montre que
$\Big(\frac{\tau(\pi_j) \, ,\, k_n/k}{\tau({\mathfrak p}_1)} \Big) = 
\tau  \Big(\frac{\pi_j \, ,\, k_n/k}{{\mathfrak p}_1} \Big) \tau^{-1} = 
\Big(\frac{\pi_j \, ,\, k_n/k}{{\mathfrak p}_1} \Big)$.

On est donc ramen\'e au seul calcul de $\Big(\frac{\pi_2 \, ,\, k_n/k}{{\mathfrak p}_1} \Big)$ avec 
$\pi_2$ \'etranger \`a ${\mathfrak p}_1$. On a \`a r\'esoudre le syst\`eme de congruences
d\'efinissant un ${\mathfrak p}_1$-associ\'e $x'$ de $x=\pi_2$
(pour $n > \delta_{{\mathfrak p}_1}(\pi_2)$)~:
\begin{equation}
\begin{aligned}
& x'  \equiv \pi_2 \pmod {\pi_1^{n+1}}, \\
& x' \equiv 1 \pmod {\pi_2^{n+1}},
\end{aligned}
\end{equation}

On d\'etermine une ``relation de B\'ezout''
$U_1\cdot \pi_1^{n+1} + U_2\cdot \pi_2^{n+1} = 1$, o\`u $U_1, U_2 \in Z_{k, (p)}$ 
(anneau des $p$-entiers de $k$), ce qui conduit \`a la solution 
$x' = U_1\cdot \pi_1^{n+1} + U_2\cdot \pi_2^{n+1} \cdot \pi_2 \pmod {p^{n+1}}$.
On a ${\mathfrak a}_{{\mathfrak p}_1}(x) = (x')$ dont on prend la 
norme dans $k/\Q$ pour caract\'eriser le symbole d'Artin pour $k_n/k$~;
son ordre, dans $(\Z/p^{n+1} \,\Z)^\times$, est
de la forme $p^{n-\delta_{{\mathfrak p}_1}(\pi_2)}$. 

\smallskip
Le cas de $\varepsilon$ est identique \`a partir du ${\mathfrak p}_1$-associ\'e 
$x'' = U_1\cdot \pi_1^{n+1} + U_2\cdot \pi_2^{n+1} \cdot \varepsilon \pmod {p^{n+1}}$.

\subsection{Programmes PARI}\label{prog}
Le programme ci-dessous (d'apr\`es \cite{P}) fournit les informations suivantes~:

\smallskip
$m$,  $h=$ nombre de classes de $k$,  $\varepsilon  = u + v \, \sqrt m =$ unit\'e fondamentale de $k$,
$u, v \in \Z$ ou $\frac{1}{2}\,\Z$, $S_k =: \{\pi_1, \,\pi_2\}$,
$z_\pi =p^{-\delta_{{\mathfrak p}_1}(\pi_2)}$ et $z_\varepsilon=p^{-\delta_p(\varepsilon)}$,
et aussi $n_\pi$ et $n_\varepsilon$ qui figurent les symboles de Hasse de $\pi_2$ et $\varepsilon$
dans $\Omega(k_n/k)$.

\smallskip
D'apr\`es le Th\'eor\`eme \ref{thm2}, la conjecture de Greenberg est v\'erifi\'ee d\`es que 
$\cl_k(S_k)=\Cl_k$ et que l'un au moins des nombres $z_\pi$ ou $z_\varepsilon$ est \'egal \`a $1$ 
(i.e., $\delta_{{\mathfrak p}_1}(\pi_2)=0$ ou $\delta_p(\varepsilon)=0$)
car le symbole de Hasse correspondant engendre ${G_n}$ pour tout
$n$ et de fait on a $\Omega(k_n/k) \simeq G_n$
(si $z_\varepsilon = 1$, c'est que le r\'egulateur normalis\'e 
$R_k \sim \frac{1}{p}\,{\rm log}(\varepsilon)$ est \'egal \`a $1$). 
En pratique le programme utilise un $n$ de l'ordre de $8$ qui suffit largement dans tous les
r\'esultats num\'eriques obtenus pour avoir les {\it valeurs exactes }de
$z_\pi$ et $z_\varepsilon$, mais d'apr\`es la Proposition \ref{basetopo}, il suffit de prendre $n_0=1$
(donc $n=n_0+1=2$ et des calculs modulo $p^2$ seulement) pour obtenir tous les cas 
o\`u le test est positif (i.e., $z_\pi=1$ ou $z_\varepsilon=1$).

\smallskip
Les mentions `` {\footnotesize PROBL\`EME-NORMIQUE} '' 
(resp. `` {\footnotesize PROBL\`EME-CLASSES} '') signifient $z_\pi < 1\ \&\ 
z_\varepsilon < 1$ (resp. $\cl_k(S_k) \ne \Cl_k$). On a $p \geq 3$.
\footnotesize
\begin{verbatim}
{for(j=2, 10, p=prime(j); m=1; n0=8; n=n0+1; while(m<10^4, m=m+1;
if(core(m)==m & kronecker(m, p)==1, y=x; Q=x^2-m; K= bnfinit(Q,1);
M=m; t=Mod(m,4); if(t!=1, M=4*m); E=quadunit(M);
h=qfbclassno(M); e1=component(E,2); e2=component(E,3);
if(t==1, e2=e2/2; e1=e1+e2); E=e1+e2*x; print(" "); 
print("m = ", m," h = ",h," E = ",E);
Su=bnfsunit(K,idealprimedec(K,p));
pi1=component(component(Su,1),1);
pi2=component(pi1,2)*x-component(pi1,1);
print("p = ",p," S = ",pi1," ",pi2); Pi1= pi1^n; Pi2= pi2^n;
Z=bezout(Pi1,Pi2); U1=component(Z,1); U2=component(Z,2);
P=y^2-Mod(m,p^n); Y=Mod(y,P); x=Y;
A1=eval(U1); A2= eval(U2); B1= eval(Pi1); B2= eval(Pi2);
b1= eval(pi1); b2= eval(pi2); e=eval(E);
XPpi=Mod(A1*B1+A2*B2*b2,P); XPe=Mod(A1*B1+A2*B2*e,P);
hs=norm(Mod(pi1,Q)); h0=valuation(hs,p);
delta=valuation(h,p)-valuation(h0,p);
npi=norm(XPpi)^(p-1); ne=norm(XPe)^(p-1);
zpi=znorder(npi)/p^n0; ze=znorder(ne)/p^n0;
if(zpi+ze <1, print("PROBLEME-NORMIQUE"));
if(delta!=0, print("PROBLEME-CLASSES")); print(zpi," ",ze); x=y)))}
\end{verbatim}

\normalsize
Donnons l'extrait suivant pour $30007 \leq m \leq 30097$, $m \equiv 1 \pmod 3$, et $p=3$
(la derni\`ere colonne donne la structure du $3$-groupe des classes du premier
\'etage $k_1$ de la $\Z_3$-extension cyclotomique de $k$)~:
\footnotesize
$$\begin{array}{ccclllccclll}
&     m        &   h &   z_\pi &  z_\varepsilon  &  \Cl_{k_1}  & \hspace{1cm} 
m &   h &   z_\pi &  z_\varepsilon  &  \Cl_{k_1} \vspace{0.1cm} \\  
&  30001    &  1  &  1  &  1 &  1  &  \hspace{1cm} 30055   &   2  & 1/27 &  1/27 & \Z/3\Z \\
&  30007   &   2  & 1/9 &  1/3 &  \Z/3\Z \times \Z/3\Z  &  \hspace{1cm} 30058   &   4  & 1  & 1 & 1 \\
&  30010   &  8  &  1 &  1 &  1  &  \hspace{1cm} 30061   &   1  & 1  & 1 & 1 \\
&  30013   &   1  &  1/3 &  1 &  1  &  \hspace{1cm} 30067   &   2  & 1  & 1 &  1 \\
&  30019   &   4 &  1  & 1/3 & \Z/3\Z  &  \hspace{1cm} 30070  &    4 &  1  & 1/3 &  \Z/3\Z \\
&  30022   &   4 &  1/3 &  1 & 1    &  \hspace{1cm} 30073   &   4 &  1 &  1/27 &\Z/3\Z  \\
&  30031   &   2  & 1/3 &  1/3 &  \Z/9\Z \times \Z/3\Z   &  \hspace{1cm} 30079   &   2 &  1  & 1 & 1 \\
&  30034   &   2  & 1 &  1 &  1   &  \hspace{1cm} 30085   &   2 &  1/3  & 1 & 1 \\
&  30043   &   18  & 1 & 1 &  \Z/9\Z     &  \hspace{1cm} 30091   &   1 &  1  & 1 & 1 \\
&  30046  &    2  & 1 &  1 &  1  &  \hspace{1cm} 30094  &    8  & 1  & 1/3 & \Z/3\Z \\
&  30049  &    1 &  1 &  1/3 & \Z/3\Z   &  \hspace{1cm} 30097  &    1  & 1  & 1&  1\\
\end{array} $$

\normalsize
Si le nombre de classes est divisible par $p$, il faut v\'erifier si le $p$-groupe des classes de $k$
est engendr\'e par les id\'eaux premiers au-dessus de $p$, sinon la conclusion n'est pas valable. Pour cela
le programme retient l'ordre $h_0$ de la classe de ${\mathfrak p}_1$ pour lequel
${\mathfrak p}_1^{h_0} = (\pi_1)$~; si les valuations 
$p$-adiques de $h_0$ et du nombre de classes $h$ sont \'egales, ceci veut dire que $\Cl_k$ est 
cyclique et engendr\'e par la $p$-classe de ${\mathfrak p}_1$. Par exemple,
dans le cas de $m= 30043$ o\`u $h=18$, la $S_k$-unit\'e g\'en\'eratrice est $317+2\, \sqrt m$, de norme
$3^9$, et par cons\'equent la classe de ${\mathfrak p}_1$, d'ordre $h_0=9$, est g\'en\'eratrice de $\Cl_k$.

\smallskip
Selon le choix par PARI d'une $S_k$-unit\'e fondamentale modulo $E_k$, la valeur 
de $z_\pi$ n'est pas intrins\`eque, mais le test normique est invariant.

\smallskip
La m\'ethode est extr\^emement simple et le programme tr\`es rapide pour n'importe quel $p$~; 
pour les $22794$ valeurs de $m$ inf\'erieures \`a $10^5$, on a  
$19993$ valeurs pour lesquelles on peut conclure que $\lambda_3 =\mu_3 =0$.
Mais d\`es que $p$ est un peu grand le test est presque toujours positif et montre que 
$\lambda_p = \mu_p =0$.

\smallskip
Pour chaque $p$, $3\leq p \leq 541$, on obtient le tableau ci-apr\`es
selon le programme de comptage ci-dessous indiquant successivement
le nombre $C_1$ de $m \leq 10^4$ (tels que $p$ soit d\'ecompos\'e dans $\Q(\sqrt m)$), 
le nombre $C_2$ de cas donnant $\lambda_p = \mu_p = 0$, et $C_1-C_2$ (cas non r\'esolus)~:
\footnotesize 
\begin{verbatim}
{for(j=2, 100, C1=0; C2=0; m=1; p=prime(j); n0=1; n=n0+1;
while(m<10^4, m=m+1; if(core(m)==m & kronecker(m,p)==1, C1=C1+1;
y=x; Q=x^2-m; K=bnfinit(Q,1); M=m; t=Mod(m,4); if(t!=1,M=4*m);
E=quadunit(M); h=qfbclassno(M); e1=component(E,2); e2=component(E,3);
if(t==1, e2=e2/2; e1=e1+e2); E=e1+e2*x; Su=bnfsunit(K,idealprimedec(K,p));
pi1=component(component(Su,1),1); pi2=component(pi1,2)*x-component(pi1,1);
Pi1=pi1^n; Pi2=pi2^n; Z=bezout(Pi1, Pi2);
U1=component(Z,1); U2=component(Z,2); P=y^2-Mod(m,p^n);
Y=Mod(y,P); x=Y; A1=eval(U1); A2=eval(U2);
B1=eval(Pi1); B2=eval(Pi2); b1=eval(pi1); b2=eval(pi2);
e=eval(E); XPpi=Mod(A1*B1+A2*B2*b2,P); XPe=Mod(A1*B1+A2*B2*e,P);
hs=norm(Mod(pi1,Q)); h0=valuation(hs, p);
delta=valuation(h,p)-valuation(h0,p);
npi=norm(XPpi)^(p-1); ne=norm(XPe)^(p-1);
zpi=znorder(npi)/p^n0; ze=znorder(ne)/p^n0;
if(zpi+ze>1&delta==0, C2=C2+1); x=y)); print(p," ",C1," ",C2," ",C1-C2))}
\end{verbatim}
$$\begin{array}{cccccccccc}
 p &  C_1 &  C_2 &  C_1-C_2   & \hspace{1cm}  p &  C_1 &  C_2 &  C_1-C_2 \vspace{0.1cm} \\ 
3   &   2279   &    2042   &    237   & \hspace{1cm}  67  &    2993   &    2993   &    0  \\
5  &    2534   &    2459    &   75   & \hspace{1cm}  71  &    2994   &    2994   &    0  \\
7   &   2660  &     2599    &   61   & \hspace{1cm}  73  &    3001   &    3001   &    0  \\
11   &   2781   &    2759   &    22   & \hspace{1cm}  79  &    3001   &    3000   &    1  \\
13   &   2822   &    2808   &    14   &  \hspace{1cm} 83  &    3002   &    3002   &    0  \\
17  &    2873   &    2860   &    13   & \hspace{1cm}  89   &   3008   &    3007   &    1  \\
19   &   2886   &    2877   &    9   & \hspace{1cm}  97  &    3011   &    3011   &    0  \\
23 &     2908   &    2904   &    4   & \hspace{1cm}  101  &    3010   &    3009   &    1  \\
29 &     2936   &    2931   &    5   & \hspace{1cm}  103  &    3005   &    3004    &   1  \\
31  &    2944   &    2939   &    5   & \hspace{1cm}  \cdots &\cdots &\cdots &\cdots \\
37  &    2960   &    2958   &    2   & \hspace{1cm}  149  &    3020  &  3019  &  1 \\
41 &     2968    &   2967   &    1   & \hspace{1cm}  \cdots &\cdots &\cdots &\cdots \\
43  &    2971   &    2971   &    0   & \hspace{1cm}  193   &   3023  & 3022 &  1 \\
47  &    2971   &    2971   &    0   & \hspace{1cm}  197   &   3029 &  3028  & 1 \\
53  &    2983   &    2982   &    1   & \hspace{1cm}  \cdots &\cdots &\cdots &\cdots \\
59  &    2986   &    2984   &    2   & \hspace{1cm}  211   &   3027 &  3026 &  1 \\
61  &    2988   &    2988    &   0   & \hspace{1cm}   \cdots &\cdots &\cdots &\cdots 
\end{array} $$

\normalsize \smallskip
Les nombres $p$, $223 \leq p \leq 541$, ou absents du tableau, 
donnent $\lambda_p=\mu_p=0$ pour tous les 
$m \leq 10^4$ tels que $p$ soit d\'ecompos\'e dans $\Q(\sqrt m)$.

\smallskip
Dans \cite{Su} il y a deux exemples plus d\'elicats (pour $p=3$)~:

\smallskip
(i) $m = 2659$,  $h = 3$,   $\varepsilon = 63190881\, \sqrt m + 3258468890$,
$\pi_1 = -2\, \sqrt m + 103 $ qui dans notre table est indiqu\'e
avec $z_\pi= \frac{1}{3}$ et $z_\varepsilon=\frac{1}{3^2}$ ($\Cl_k$ est engendr\'e par $S_k$).
Il faut alors d'autres calculs explicites dans la tour pour d\'emontrer que $\lambda_3=\mu_3=0$
(cf. \cite{FT1}, \cite{FK}, \cite{IS1}, \cite{IS2} utilisant soit le ``Spiegelungssatz'' dans
$k(j)$ ($j^3=1,\ j \ne 1$), soit les unit\'es cyclotomiques).

\smallskip
(ii) \!$m =\!12007$, $h = 3$, $\varepsilon = 65199591367431507 \sqrt m + 7144340241111277688$,
$\pi_1 = 429331\,  \sqrt m + 47044570$,
avec $z_\pi=\frac{1}{3^6}$ et $z_\varepsilon=\frac{1}{3^2}$ ($\Cl_k$ est engendr\'e par $S_k$). 
Dans ce cas, la v\'erification utilise les fonctions $L$ $p$-adiques, de nombreux arguments 
et aussi \cite{IS1}, \cite{IS2}.

\smallskip
Voir d'autres raisonnements dans \cite{BJ}, \cite{CN}, \cite{F1}, \cite{FK}, \cite{FT1}, 
\cite{FT2}, \cite{I}, \cite{KS} (cas $p=3$ non d\'ecompos\'e), \cite{LMN}, \cite{Ni} 
(cas $p=2$), \cite{T1}, \cite{T2} (pour des corps cubiques totalement r\'eels et $p=3$), 
et bien d'autres. 

\medskip
Pour une table assez compl\`ete ($p= 3, 5, 7, 11, 13, 17, 19, 23, 29$), pri\`ere de se con\-necter \`a~:

\medskip\footnotesize
\centerline{\url{https://www.dropbox.com/s/tcqfp41plzl3u60/R}}

\normalsize \medskip
Il y a identit\'e des valeurs de $\delta_{{\mathfrak p}_1}(\pi_2)$ et
$\delta_p(\varepsilon)$, pour $p=3$, avec celles de la table de \cite{FT1}.

\medskip \normalsize
Le programme suivant calcule la structure des groupes des classes 
de $k$ et de $k_1$ pour $p=3$ et $b \leq m \leq B$, $m \equiv 1 \pmod 3$~:
\footnotesize 
\begin{verbatim}
{b=10^6; B=b+10^3; b=b-component(Mod(b,3),2)+1; m=b;
while(m<B, m=m+3; if(core(m)==m, K=bnfinit(x^2-m,1);
h=bnrinit(K,1); h=component(h,5);
R=component(polcompositum(x^3-3*x+1,x^2-m),1);
H=bnrinit(bnfinit(R,1),1); H=component(H,5);
print("m = ",m,"  h = ",h,"  structure = ", H)))}
\end{verbatim}

\normalsize
Ce qui donne les quelques exemples suivants avec $\Cl_{k_1} \ne 1$
(sous la forme $ [\order \Cl_{k_1}, [\hbox{structure}]]$)~:
\footnotesize
$$\begin{array}{rllrll}
 m \ \ \ \  \  & \  \Cl_k &\   \Cl_{k_1}    & \hspace{1cm}  m \ \ \ \ \ & \  \Cl_k &\   \Cl_{k_1}  \vspace{0.15cm} \\ 
1000003   &   [3, [3]]  &   [27, [3, 3, 3]]   &  \hspace{1cm} 1000126 &   [2, [2]]   &  [24, [6, 2, 2]]  \\
1000018  &    [4, [4]]   &  [12, [12]]    &  \hspace{1cm} 1000135  &    [4, [2, 2]]   &  [12, [6, 2]]  \\
1000042  &    [36, [18, 2]]  &   [36, [18, 2]]   & \hspace{1cm}  1000147  &    [30, [30]]  &   [90, [30, 3]]  \\
1000051 &     [4, [2, 2]]    &   [12, [6, 2]]     & \hspace{1cm}  1000159 &   [1  ]   &   [3, [3]]  \\
1000093  &    [12, [6, 2]]  &   [48, [6, 2, 2, 2]]     & \hspace{1cm}  1000177  &    [1  ]    &   [12, [6, 2]]  \\
1000099  &    [1  ]     &   [3, [3]]  \   &  \hspace{1cm} 1000189  &    [1 ]   &   [9, [3, 3]]  \\
1000102  &    [2, [2]]    &   [18, [6, 3]]    &  \hspace{1cm} 1000198  &  [12, [12]]  &   [36, [36]] \\ 
(......)   &  &  &  &  &  \vspace{0.05cm}  \\
100000006 &   [12, [12]] &   [36, [36]]   &  \hspace{1cm} 100000027  & [6, [6]] & [72, [18, 2, 2]]  
\end{array} $$
\normalsize

\section{Filtration des $\Cl_{k_n}$ -- 
Groupes de nombres $\Lambda_i^n$}\label{quant}\label{sect6}
On revient au cas g\'en\'eral d'un corps de nombres Galoisien r\'eel $k$ de degr\'e $d$ et
o\`u $p>2$ est un nombre premier totalement d\'ecompos\'e dans $k$.
On suppose que la conjecture de Leopoldt est v\'erifi\'ee pour $p$ dans $k$.

\smallskip
Dans cette section nous reprenons l'analyse de la conjecture de Greenberg sous la forme 
directe du ``calcul'' du $p$-groupe des classes de $k_n$ selon l'algo\-rithme d\'efini dans 
\cite{Gra4}, et en utilisant des id\'eaux \'etrangers \`a $p$ pour repr\'esenter les classes.

\subsection{Introduction de la filtration des $\Cl_{k_n}$}
Si l'on pose pour simplifier $M^n := \Cl_{k_n}$, 
$k_n \subset k_\infty$ de degr\'e $p^n$ sur $k$ et de groupe de Galois $G_n =: 
\langle \sigma_n \rangle$, il existe une filtration ainsi d\'efinie~:

\begin{definition} \label{def1} Pour $n \geq 1$ fix\'e, $(M^n_i)_{i \geq 0}$ est la $i$-suite croissante 
de sous-${G_n}$-modules de $M^n$ d\'efinie (avec $M^n_0:= 1$) par
$M^n_{i+1}/M^n_i := (M^n/M^n_i)^{G_n}$, pour $0\leq i \leq m_n-1$,
o\`u $m_n$ est le plus petit entier $i$ tel que $M^n_i = M^n$.
\end{definition}

\begin{remarks}\label{filtration}
(i) Pour $i=0$, on obtient $M^n_1 = (M^n)^{G_n}$. 

\smallskip
(ii) On a $M^n_{i+1} = \{c \in M^n, \ c^{1-\sigma_n} \in M^n_i \}$~; ainsi~:
$$M^n_i= \{c \in M^n, \, c^{(1-\sigma_n)^i} =1 \}. $$

(iii) Pour $n$ fix\'e, la $i$-suite des $\order  (M^n_{i+1}/M^n_i)$, $0 \leq i \leq m_n$, est 
{\it d\'ecroissante} vers $1$ et major\'ee par $\order  M_1^n$  en raison des injections~:
$$M^n_{i+1}/M^n_i \hookrightarrow M^n_i/M^n_{i-1}
\hookrightarrow \cdots \hookrightarrow M^n_2/M^n_1 \hookrightarrow M_1^n$$

d\'efinies par l'op\'eration de $1-\sigma_n$.
\end{remarks}

Ensuite, pour les sous-${G_n}$-modules $M^n_i =: \cl_{k_n}({\mathcal I}_i^n)$ de $M^n$
(o\`u l'on peut toujours supposer que ${\mathcal I}_i^n \subseteq {\mathcal I}_{i+1}^n$),
on a la formule g\'en\'erale du Th\'eor\`eme \ref{thm1} qui devient dans notre cas particulier~:
\begin{equation}\label{eq5}
\order  \big( M^n_{i+1} / M^n_i \big) = \frac{\order  \Cl_{k}}{\order {\rm N}_{k_n/k}(M^n_i) }  \cdot 
\frac{p^{n \cdot (d -1)}}{(\Lambda_i^n :  \Lambda_i^n \cap {\rm N}_{k_n/k}(k_n^\times))},
\end{equation}

o\`u $\Lambda_i^n := \{x \in k^\times ,\ \, (x) \in {\rm N}_{k_n/k}({\mathcal I}_i^n) \}$
contient $E_k$, et o\`u tout $x \in \Lambda_i^n$ est par nature
norme locale en dehors de $p$ dans $k_n/k$. On a alors~:
\begin{equation}\label{prod}
\order  M^n = \prd_{i=0}^{m_n-1} \order  \big( M^n_{i+1}/M^n_i\big). 
\end{equation}

Pour $n$ fix\'e, les $M^n_i$ et ${\rm N}_{k_n/k}(M^n_i)$ d\'efinissent des $i$-suites 
{\it croissantes} de sous-groupes de $\Cl_{k_n}$ et $\Cl_k$ respectivement.
On obtient que les $i$-suites d'entiers ($i \geq 0$)~:

\centerline{$\ds \frac{\order \Cl_{k}}{\order {\rm N}_{k_n/k}(M^n_i)}\ \ $ et 
$\ \ \ds \frac{p^{n \cdot (d -1)}}{(\Lambda_i^n : \Lambda_i^n \cap {\rm N}_{k_n/k}(k_n^\times))}$, }

\smallskip
sont respectivement des $i$-suites {\it d\'ecroissantes vers $1$} de diviseurs 
de $\order \Cl_{k}$ et de $R_k$ (Th\'eor\`eme \ref{constant} (iii)), en raison, 
pour la seconde, des injections~: 
\begin{equation*}
\begin{aligned}
E_k/E_k \cap {\rm N}_{k_n/k}(k_n^\times)\hookrightarrow \cdots \hookrightarrow &
\Lambda_i^n/\Lambda_i^n \cap {\rm N}_{k_n/k}(k_n^\times) \\ & \hookrightarrow 
\Lambda_{i+1}^n/\Lambda_{i+1}^n \cap {\rm N}_{k_n/k}(k_n^\times)  \hookrightarrow \cdots
\end{aligned} 
\end{equation*}

les $\order  \big( M^n_{i+1} / M^n_i \big)$ divisant $\order {\mathcal T}_k 
= \order \Cl_k \!\cdot\! R_k$ et d\'ecroissant vers $1$.
Donc on a au rang final $i=m_n$, en utilisant ce qui pr\'ec\`ede pour
$M^n_{m_n} = \Cl_{k_n}$, les relations ${\rm N}_{k_n/k} \big (M_{m_n}^n \big) = \Cl_k$ et
$(\Lambda_{m_n}^n  : \Lambda_{m_n}^n \cap {\rm N}_{k_n/k}(k_n^\times)) 
= p^{n \cdot (d -1)}$, ce qui explique que $\order  \Cl_{k_n}$ d\'epend essentiellement 
du nombre de pas $m_n$~:

\begin{theorem} \label{O} Soit $k$ Galoisien r\'eel dans lequel $p>2$ est 
totalement d\'ecompos\'e et v\'erifie la conjecture de Leopoldt.
Soit ${\mathcal T}_k$ le groupe de torsion du groupe de Galois de la pro-$p$-extension 
Ab\'elienne $p$-ramifi\'ee maximale de $k$ (cf. \S\,\ref{torsion}). 

Soit $N_{\!\rm Iw} \geq 0$ tel que la formule d'Iwasawa $\order \Cl_{k_n} = 
p^{ \lambda \cdot n + \mu \cdot p^n + \nu}$ soit valable pour tout $n \geq N_{\!\rm Iw}$.

\smallskip
(i) \!On a les in\'egalit\'es $m_n \leq \lambda \cdot n + \mu \cdot p^n + \nu \leq 
v_p(\order  {\mathcal T}_k) \cdot m_n$ pour tout $n \geq N_{\!\rm Iw}$, o\`u $v_p$ d\'esigne
la valuation $p$-adique. Si ${\mathcal T}_k=1$, alors 
$\lambda=\mu=\nu= 0$.

\smallskip
(ii) Si ${\mathcal T}_k \ne 1$, alors $m_n = c(n)\cdot (\lambda \cdot n + \mu \cdot p^n +  \nu)$,
$\frac{1}{v_p(\order  {\mathcal T}_k)} \leq c(n) \leq 1$, avec en particulier l'in\'egalit\'e 
principale sur le nombre de pas $m_n$ de l'algorithme~:
\begin{equation*}
m_n \geq \frac{1}{v_p(\order  {\mathcal T}_k)} \,\big (\lambda \cdot n + \mu \cdot p^n +  \nu \big),
\  \hbox{pour tout $n \geq N_{\!\rm Iw}$}.
\end{equation*}
\end{theorem}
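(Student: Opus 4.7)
Le plan est de partir de la d\'ecomposition multiplicative \eqref{prod}, soit
$\order \Cl_{k_n} = \prod_{i=0}^{m_n-1} \order (M_{i+1}^n/M_i^n)$,
et d'en prendre la valuation $p$-adique : pour $n \geq N_{\!\rm Iw}$ elle vaut, par hypoth\`ese, $\lambda \cdot n + \mu \cdot p^n + \nu$. L'id\'ee est d'encadrer uniform\'ement chacun des $m_n$ facteurs entre $p$ et $\order {\mathcal T}_k$ afin d'obtenir simultan\'ement les deux in\'egalit\'es du (i).

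Pour la majoration $m_n \leq \lambda \cdot n + \mu \cdot p^n + \nu$, je noterais que la d\'efinition m\^eme de $m_n$ comme plus petit indice tel que $M^n_{m_n} = \Cl_{k_n}$ force $M_i^n \subsetneq M_{i+1}^n$ pour tout $0 \leq i \leq m_n - 1$~; chaque facteur de \eqref{prod} est alors non trivial, donc de $p$-valuation au moins $1$, d'o\`u par sommation $m_n \leq v_p(\order \Cl_{k_n})$. Pour la minoration, je m'appuierais sur le fait que chaque $\order (M_{i+1}^n/M_i^n)$ divise $\order {\mathcal T}_k = \order \Cl_k \cdot R_k$ : ceci r\'esulte en effet de l'\'ecriture \eqref{eq5}, o\`u le premier quotient $\frac{\order \Cl_k}{\order {\rm N}_{k_n/k}(M_i^n)}$ divise $\order \Cl_k$ et le second $\frac{p^{n(d-1)}}{(\Lambda_i^n : \Lambda_i^n \cap {\rm N}_{k_n/k}(k_n^\times))}$ divise $R_k$ en vertu du Th\'eor\`eme \ref{constant} (iii) appliqu\'e au sous-groupe $\Lambda_i^n \supseteq E_k$ (dont tout \'el\'ement est, par construction de ${\mathcal I}_i^n$, norme locale en dehors de $p$ dans $k_n/k$). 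Chaque facteur contribue donc au plus $v_p(\order {\mathcal T}_k)$, d'o\`u $v_p(\order \Cl_{k_n}) \leq m_n \cdot v_p(\order {\mathcal T}_k)$.

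Le cas ${\mathcal T}_k = 1$ s'en d\'eduit imm\'ediatement : chaque facteur de \eqref{prod} \'etant alors un diviseur de $1$, aucun indice $i < m_n$ n'est possible, donc $m_n=0$ et $\Cl_{k_n}=1$ pour tout $n$. La formule d'Iwasawa $p^{\lambda n + \mu p^n + \nu} = 1$ valable pour tout $n \geq N_{\!\rm Iw}$ force successivement $\mu = 0$ (par croissance en $p^n$), puis $\lambda = 0$ (par croissance en $n$), puis $\nu = 0$. Pour le point (ii), il suffit de poser $c(n) := m_n / (\lambda \cdot n + \mu \cdot p^n + \nu)$ lorsque le d\'enominateur est non nul : les deux encadrements du (i) donnent alors exactement $\frac{1}{v_p(\order {\mathcal T}_k)} \leq c(n) \leq 1$.

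L'unique point d\'elicat est la divisibilit\'e de chaque facteur par $\order {\mathcal T}_k$, qui repose \`a la fois sur la formule des classes ambiges g\'en\'eralis\'ee (Th\'eor\`eme \ref{thm1}) et sur l'interpr\'etation r\'egulatrice du Th\'eor\`eme \ref{constant}~; mais cet argument figure d\'ej\`a explicitement dans la discussion des deux $i$-suites d\'ecroissantes juste avant l'\'enonc\'e. La d\'emonstration est donc pour l'essentiel un r\'eassemblage des r\'esultats ant\'erieurs et ne pr\'esente pas d'obstacle de principe.
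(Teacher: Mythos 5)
Votre preuve est correcte et suit essentiellement la même démarche que celle du texte : encadrement de chaque facteur de \eqref{prod} entre $p$ (non-trivialité des quotients $M_{i+1}^n/M_i^n$ pour $i<m_n$) et $\order {\mathcal T}_k$ (divisibilité issue de \eqref{eq5} et du Théorème \ref{constant} (iii)), d'où $p^{m_n} \leq \order \Cl_{k_n} \leq (\order {\mathcal T}_k)^{m_n}$ et les deux inégalités. Les cas ${\mathcal T}_k=1$ et le point (ii) s'en déduisent comme vous l'indiquez.
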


\begin{proof} Comme $\order  \big( M^n_{i+1}/M^n_i\big) \geq p$ pour $0 \leq i \leq m_n-1$,
on obtient $\order \Cl_{k_n}  = \order M^n \geq p^{m_n}$ en utilisant l'expression \eqref{prod}~;
on en d\'eduit l'in\'egalit\'e triviale $m_n \leq \lambda \cdot n + \mu \cdot p^n +  \nu$.
Ensuite, d'apr\`es le fait que $\order \big( M^n_{i+1}/M^n_i \big) \mid \order {\mathcal T}_k$,
on a $\order  \big( M^n_{i+1}/M^n_i \big) \leq \order  {\mathcal T}_k$ pour $0 \leq i\leq m_n-1$~; 
d'o\`u $\order \Cl_{k_n} \leq (\order  {\mathcal T}_k)^{m_n}$ \`a nouveau par \eqref{prod}, ce qui 
ach\`eve la d\'emonstration.
\end{proof}

\begin{remark}\label{mn}
Une heuristique raisonnable est que $m_n$ reste, pour tout $n$, d'une valeur 
moyenne fonction de ${\rm Gal}(k/\Q)$ et de ${\mathcal T}_k$ et non de $n$.
Or, pour tout $n \gg 0$, les $\order \big( M^n_{i+1}/M^n_i\big)$ forment, 
\`a partir de $\order  M_1^n$, une $i$-suite {\it d\'ecroissante vers $1$} d'entiers divisant 
$\order {\mathcal T}_k$~; une telle suite ne peut avoir, pour $\lambda$ ou $\mu$ non nuls, 
au moins $\frac{1}{v_p(\order {\mathcal T}_k)} \, (\lambda \cdot n + \mu \cdot p^n +  \nu)$ 
termes non triviaux sauf \`a \'etablir un lien de type particulier avec les normes d'id\'eaux 
${\mathfrak A}_i \in {\mathcal I}_i^n$ utilis\'es dans la tour, ce qui philosophiquement 
semble exclu, \`a notre avis, car ceci implique l'existence, pour $n \to \infty$, 
d'au moins un diviseur $t_n \ne 1$ de $\order  {\mathcal T}_k$ tel que 
$\order \big( M^n_{i+1}/M^n_i\big) = t_n$ pour 
$O(1)\cdot (\lambda \cdot n + \mu \cdot p^n +  \nu)$ 
valeurs {\it cons\'ecutives} de $i \in [1, m_n]$.

\smallskip
En faisant l'hypoth\`ese tr\`es faible que la probabilit\'e de
$\order \big( M^n_{i+1}/M^n_i\big) \geq p$ pour $m_n = O(n)$ pas cons\'ecutifs
de l'algorithme est de la forme $\ds \frac{1}{p^{f(n)}}$, o\`u $f(n) \to \infty$ avec $n$,
on peut admettre, sur un plan heuristique assez \'evident, qu'\`a partir d'un certain
$n \gg N_{\!\rm Iw}$, ceci n'est plus vraisemblable, auquel cas c'est {\it l'existence m\^eme
de la formule d'Iwasawa} qui sugg\`ere la nullit\'e de $\lambda$ et $\mu$ et l'existence 
de $i_0$ assez grand tel que $\order \big( M^n_{i_0+1}/M^n_{i_0}\big)=1$ pour
tout $n \gg 0$ (i.e., $m_n \leq i_0$ pour tout $n$).
Nous approfondirons cet aspect, concernant l'existence de $i_0$, en examinant 
s\'epar\'ement les deux facteurs~:
$$\hbox{$\ds \frac{\order \Cl_{k}}{\order {\rm N}_{k_n/k}(M^n_i)}\ \ $ $\ \&\ $ 
$\ \ \ds \frac{p^{n \cdot (d -1)}}{(\Lambda_i^n : \Lambda_i^n \cap {\rm N}_{k_n/k}(k_n^\times))}$,} $$

qui ne sont pas de la m\^eme nature,
et en \'enon\c cant les Heuristiques \ref{heur1} et \ref{heur2}, puis
nous donnerons des justifications probabilistes tr\`es naturelles (\S\,\ref{resglob}) et  
des statistiques num\'eriques (Section \ref{BC}).
\end{remark}

A cet effet, rappelons l'algorithme num\'erique qui permet de passer de $\Lambda^n_i$ \`a 
$\Lambda^n_{i+1}$, et qui d\'etermine le nombre de pas $m_n$, pour analyser les 
ph\'enom\`enes en jeu, car d\`es qu'un $x \in \Lambda_{i+1}^n$ se rajoute 
aux \'el\'ements de $\Lambda_i^n$, ses $\delta_{\mathfrak p}(x)$, ${\mathfrak p} \mid p$,
ont une grande probabilit\'e d'\^etre nuls ou au moins inf\'erieurs aux pr\'ec\'edents, 
ce qui donne la $i$-suite des $\ds \frac{p^{n \cdot (d -1)}}
{(\Lambda_i^n\  :\  \Lambda_i^n \cap {\rm N}_{k_n/k}(k_n^\times))}$ 
rapidement d\'ecroissante vers~1.

\subsection{Algorithme de calcul des $\Lambda_i^n$}\label{algo}
On omet l'indice $n$ qui est fix\'e et on se place dans l'extension 
$K \subset k_\infty$ de degr\'e $p^n$ et de groupe de Galois 
$G = :\langle \sigma \rangle$ d'ordre $p^n$.
On d\'esigne pour simplifier par ${\rm N}$ la norme arithm\'etique ${\rm N}_{K/k}$
et on pose $M := \Cl_K$.

\smallskip
(i) Pour le calcul de $M_1=M^G$ \`a partir de $M_0=1$ et ${\mathcal I}_0=1$, 
on a  $\order M_1=\order  \Cl_k \cdot \ds \frac{p^{n \cdot (d -1)}}
{(\Lambda_0 : \Lambda_0 \cap {\rm N}( K^\times))}$
avec $\Lambda_0= \{x_0 \in k^{\times},\  (x_0) \in {\rm N} (1)\} = E_k$.

\smallskip
On consid\`ere les $x_0 \in \Lambda_0$ qui sont normes d'un \'el\'ement $y_1 \in K^{\times}$. 
Donc $(x_0) =  {\rm N}(y_1) = (1)$, ce qui conduit \`a l'existence de ${\mathfrak A}_1 \in I_K$ tel que
${\mathfrak A}_1^{1-\sigma} = (y_1)$, o\`u ${\mathfrak A}_1$ est d\'efini
\`a un id\'eal invariant pr\`es~; donc ici, puisque les id\'eaux premiers de $K$ au-dessus de $p$
sont invariants, on peut prendre ${\mathfrak A}_1$ \'etranger \`a $p$. On a 
$\cl_K({\mathfrak A}_1) \in M_1$.
R\'eciproquement, si $\cl_K({\mathfrak A}'_1) \in M_1$, ${\mathfrak A}'_1$ choisi \'etranger \`a $p$
dans sa classe, il existe $y'_1 \in K^{\times}$ tel que ${\mathfrak A}'_1{}^{1-\sigma} = (y'_1)$, 
donnant ${\rm N}(y'_1) = x'_0 \in \Lambda_0 \cap {\rm N}(K^\times)$. Ainsi $y'_1$ est \'etranger \`a $p$.

\smallskip
Les classes de ces id\'eaux ${\mathfrak A}_1$ engendrent $M_1$ et on pose~:
$${\mathcal I}_1=\langle {\mathfrak A}_1^1, \ldots, {\mathfrak A}_1^{r_1} \rangle. $$

Ceci suppose que l'on a r\'esolu suffisamment d'\'equations normes
pour engendrer $M_1$. Par exemple, le cas
$x_0=y_1=1$ conduit normalement \`a inclure dans ${\mathcal I}_1$ 
des id\'eaux ambiges repr\'esentant les classes de $k$ et les id\'eaux
${\mathfrak P}$ de $K$ au-dessus de $p$~; en effet, ${\mathfrak A}_1$ est d\'efini
\`a un id\'eal invariant pr\`es, ce qui peut changer la classe car ${\mathfrak A}_1$ peut \^etre
principal et non ${\mathfrak A}_1\cdot {\mathfrak P}$~; mais on peut toujours repr\'esenter la classe de 
${\mathfrak P}$ par un id\'eal ${\mathfrak A}'$ de $K$ \'etranger \`a $p$. 
On obtient ${\rm N}(M_1)$ comme sous-groupe de $\Cl_k$, \`a partir de
${\rm N}({\mathcal I}_1) = \langle {\rm N}( {\mathfrak A}_1^1), \ldots, {\rm N}({\mathfrak A}_1^{r_1}) \rangle$~;
alors il vient $\Lambda_1 = \{x_1 \in k^{\times}, \ (x_1) \in {\rm N}({\mathcal I}_1)\}$ et, 
toujours au niveau $n$ fix\'e~:
\begin{equation*}
\order ( M_2 /M_1)  = \frac{\order  \Cl_k}{\order  {\rm N}(M_1)} \times
 \frac{p^{n \cdot (d -1)}}{(\Lambda_1  : \Lambda_1 \cap {\rm N}(K^\times))} .
\end{equation*}

On v\'erifie qu'en d\'epit de la non unicit\'e de ${\mathcal I}_1$, le groupe
$\Lambda_1$ est unique modulo ${\rm N}(K^\times)$.

\smallskip
(ii) Pour le calcul de ${\mathcal I}_2$, on consid\`ere les \'el\'ements $x_1$ de
$\Lambda_1$ qui sont normes d'un $y_2 \in K^{\times}$ (de fait on se limite aux $x_1$
repr\'esentant $ \Lambda_1 \cap {\rm N}(K^\times)$ modulo $E_k \cap {\rm N}(K^\times)$)~; 
alors $(x_1) = {\rm N}(y_2) = {\rm N}({\mathfrak B}_1)$, ${\mathfrak B}_1 \in {\mathcal I}_1$,
donc il existe ${\mathfrak A}_2 \in I_K$ tel que 
$ {\mathfrak A}_2^{1-\sigma}\cdot {\mathfrak B}_1= (y_2)$, avec ${\mathfrak A}_2$
choisi \'etranger \`a $p$. On a $\cl_K({\mathfrak A}_2)^{1-\sigma} \in M_1$.
Inversement, si $\cl_K({\mathfrak A}'_2) \in M_2$,
${\mathfrak A}'_2$ \'etranger \`a $p$, il existe $y'_2 \in K^{\times}$ 
tel que ${\mathfrak A'}_2^{1-\sigma} \cdot {\mathfrak B}'_1= (y'_2)$, avec
${\mathfrak B}'_1 \in {\mathcal I}_1$, d'o\`u ${\rm N}({\mathfrak B}'_1) = 
{\rm N}(y'_2) =: (x'_1)$, $x'_1 \in  \Lambda_1\cap {\rm N}(K^\times)$.

\smallskip
Ces id\'eaux de la forme ${\mathfrak A}_2^1, \ldots, {\mathfrak A}_2^{r_2}$,
sont ajout\'es \`a ${\mathcal I}_1$ pour former~:
$${\mathcal I}_2=\langle {\mathfrak A}_1^1, \ldots, {\mathfrak A}_1^{r_1}\ ; \ 
{\mathfrak A}_2^1, \ldots, {\mathfrak A}_2^{r_2} \rangle, $$

d'o\`u ${\rm N}({\mathcal I}_2)$ et $\Lambda_2 = \{x_2 \in k^{\times}, \ (x_2) \in {\rm N}({\mathcal I}_2)\}$, etc.
On obtient donc des groupes $\Lambda_i$, \'etrangers \`a $p$, uniques modulo 
${\rm N}(K^\times)$, tels que~:
$$E_k \subseteq \Lambda_1 \subseteq \cdots \subseteq \Lambda_i \subseteq \ldots. $$

On suppose implicitement que chaque ${\mathcal I}_i$ est form\'e d'id\'eaux \'etrangers \`a $p$.
Dans le cas $\Cl_k=1$, les groupes $\Lambda_i$ sont engendr\'es par les unit\'es
$\varepsilon \in E_k$ et les $\alpha_i^j$ g\'en\'erateurs des ${\rm N}({\mathfrak A}_i^j)$,
ce qui simplifie l'expression de $\Lambda_i$.

\smallskip
Cette description n'a pas lieu d'\^etre effective dans la tour $k_\infty$, mais elle montre
comment se d\'eterminent les \'el\'ements $x_i$ des $\Lambda_i$ dont on rappelle que 
tout repose sur leurs ${\mathfrak p}$-quotients de Fermat, pour ${\mathfrak p} \mid p$, 
\`a partir de l'\'ecriture $x_i^{p-1} =1+ p\cdot \beta(x_i)$, conduisant \`a 
$\delta_{\mathfrak p}(x_i) = v_{\mathfrak p}(\beta(x_i)) \geq 0$.

\smallskip
Une heuristique classique est que, pour ${\mathfrak p}$ fix\'e, les 
$\delta_{\mathfrak p}(x)$, $x \in k^\times$, sont al\'eatoires et ind\'ependants.
Cependant, il est n\'ecessaire de pr\'eciser cet aspect pour le cas 
des id\'eaux ${\mathfrak A}_i$ de $K$ constituant les groupes ${\mathcal I}_i$
pour lesquels ${\rm N}_{K/k}({\mathcal I}_i)$ conduit aux nombres $x_i$.
Nous montrerons au \S\,\ref{resglob} et dans la Section \ref{BC} 
ce qu'il en est au plan probabiliste en utilisant des programmes 
\'etablissant des statistiques tr\`es convaincantes, tenant compte de la formule du produit. 

\smallskip
Auparavant, examinons de fa\c con plus num\'erique un cas non trivial, mais
le plus simple possible, en supposant la conjecture fausse.

\subsection{La $i$-suite des $\Lambda_i^n$ sous les hypoth\`eses
$\Cl_k=1\ \& \  \lambda \geq 1$}\label{idx}
Pour fixer les id\'ees, supposons que $k$ est un corps quadratique r\'eel, que $p>2$ est
d\'ecompos\'e, que $\Cl_k=1$, que $\delta_p(\varepsilon) \geq 1$, et que
(en posant pour simplifier ${\mathfrak p}_1=: {\mathfrak p}$ et $\pi_2 =: \pi$) on a
$\delta_{\mathfrak p}(\pi) \geq 1$ (cf. Section \ref{quad}). 

Par exemple, $m = 103$, $\varepsilon = 22419 \, \sqrt {103} + 227528$, $\pi = \sqrt {103} + 10$,
pour lesquels $k$ est principal, $\delta_p(\varepsilon)= 1$
et $\delta_{\mathfrak p}(\pi)= 1$~; ainsi la condition suffisante du 
Th\'eor\`eme \ref{thm2} n'est pas v\'erifi\'ee. 
On a donc $\order {\mathcal T}_k = R_k = p$ et la relation de divisibilit\'e du 
Th\'eor\`eme \ref{regulateur} (i) pour tout $n$.

\smallskip
Soit $n$ fix\'e pour lequel la formule d'Iwasawa est applicable, posons
\`a nouveau $K:=k_n$, $M := \Cl_K $, et supposons par exemple $\lambda =1$ 
($\mu=0$, $\nu \in \Z$)~; on a donc au niveau $n$, $\order  M = p^{n + \nu}$ et
puisque $\delta_p(\varepsilon)= 1$~:
$$\order  M_1 =  \frac{p^{n} }{(\Lambda_0 : \Lambda_0 \cap {\rm N}_{K/k}(K^\times))} = p, 
\ \ \hbox{avec $\Lambda_0 = E_k$.}$$

On a ${\mathcal I}_1=\langle {\mathfrak A}_1 \rangle$ obtenu de la fa\c con suivante~:
pour une puissance convenable $\varepsilon'$ de l'unit\'e fondamentale $\varepsilon$ (de fait, 
sous l'hypoth\`ese faite, n\'ecessairement $\varepsilon'=\varepsilon^{p^{n-1}}$), on a
$\varepsilon' =  {\rm N}(y_1)$, $y_1 \in K^\times$, d'o\`u $(y_1) =  {\mathfrak A}_1^{1-\sigma}$
avec ${\mathfrak A}_1$ \'etranger \`a $p$ tel que 
la classe de ${\mathfrak A}_1$ soit d'ordre $p$ puisque $M_1$ est d'ordre~$p$. 

\smallskip
Posons ${\mathfrak A}_1^p=(z_1)$, $z_1 \in K^\times$, et
${\rm N}_{K/k}({\mathfrak A}_1)=(\alpha_1)$, $\alpha_1 \in k^\times$~; on obtient 
$\alpha_1^p = {\rm N}_{K/k}(z_1) \cdot \eta_1$ o\`u $\eta_1$ est une unit\'e de $k$. 
Soit alors $\omega = \omega _n$ l'application qui \`a $x \in k^\times$ associe
dans $\Omega(K/k) \simeq \Z/p^n\Z$ la famille des 
symboles de restes normiques dans $K/k$. On obtient 
$\omega(\eta_1) = \omega(\alpha_1^p) = \omega(\alpha_1)^p \in \Omega(K/k)^p$~; 
or, en raison de l'hypoth\`ese 
$\delta_p(\varepsilon) = 1$, on a $\omega(\eta_1) \in \Omega(K/k)^p$, et
par cons\'equent $\omega(\alpha_1)$, donc $\delta_p(\alpha_1)$, peut a priori 
prendre toute valeur de fa\c con \'equiprobable dans $\Omega(K/k)$ (sans la 
condition $\omega(\eta_1) \in \Omega(K/k)^p$, l'algorithme se serait arr\^et\'e avant, 
la relation $\omega(\eta_1) = \omega(\alpha_1^p)$ \'etant alors absurde).
On a donc obtenu 
${\rm N}_{K/k}({\mathcal I}_1) = \langle  {\rm N}_{K/k}({\mathfrak A}_1) \rangle
=  \langle (\alpha_1) \rangle$, d'o\`u~:
$$\order  (M_2/M_1) = \frac{p^{n} }{(\Lambda_1 : \Lambda_1 \cap {\rm N}_{K/k}(K^\times))}
= 1 \hbox{ ou }  p,
\ \ \hbox{avec}\  \Lambda_1 =  \langle \varepsilon,\, \alpha_1 \rangle.$$

Puisque $\delta_p(\varepsilon) = 1$, l'algorithme ne peut se terminer par $M=M_1$
que si $\delta_p(\alpha_1)$ est nul~; sinon l'algorithme se poursuit, et en supposant 
$\delta_p(\alpha_1) \geq 1$, il faut prendre un \'el\'ement convenable de $\Lambda_1$, 
de la forme $\alpha_1^u \cdot \varepsilon^v$, $u, v \in \Z$, comme norme de $y_2$ 
dans $K/k$ et on obtient, puisque ${\rm N}_{K/k}({\mathfrak A}_1)=(\alpha_1)$, 
une relation de la forme ${\mathfrak A}_2^{1-\sigma} \, {\mathfrak A}_1^u = (y_2)$ 
dans $K$ pour d\'efinir ${\mathcal I}_2$.

\smallskip
Mais $M_2/M_1$ est annul\'e par $p$, et en supposant $M_2 \ne M_1$,
on aura ${\mathfrak A}_2^p = {\mathfrak A}_1^w \cdot (z_2)$, $w \in \Z$,
$z_2 \in K^\times$, d'o\`u ${\rm N}_{K/k}({\mathfrak A}_2^p) = (\alpha_1^w)\, {\rm N}_{K/k}(z_2)$ 
qui conduit, en posant $(\alpha_2) = {\rm N}_{K/k}({\mathfrak A}_2)$, 
\`a $\omega(\alpha_2)^p = \omega(\alpha_1^w \cdot \eta_2)$ 
pour une unit\'e $\eta_2$ de $k$~; or $\omega(\alpha_1)$ et $\omega(\eta_2)$ sont dans 
$\Omega(K/k)^p$.
Comme pr\'ec\'edemment, $\omega(\alpha_2)$ n'est soumis a priori \`a aucune obstruction
pour conduire \'eventuellement \`a $\delta_p(\alpha_2) = 0$.

\smallskip
L'algorithme pour $K$ se poursuit avec des calculs analogues et on obtient~:
\begin{equation}\label{Lambda}
\omega(\alpha_1^p) \in \omega(\Lambda_0),  \  \omega(\alpha_2^p) \in \omega(\Lambda_1), \ldots, 
\ \omega(\alpha_{i+1}^p) \in \omega(\Lambda_i),  \ldots  
\end{equation} 

o\`u $\Lambda_i = \langle  \varepsilon,  \alpha_1,  \ldots,  \alpha_i \rangle$
et o\`u les $\order  \omega(\Lambda_i) = 
(\Lambda_i : \Lambda_i \cap {\rm N}_{K/k}(K^\times))$ 
forment une $i$-suite croissante minor\'ee par $p^{n-1}$
puisque $\order  (M_{i+1}/M_i)=\ds \frac{p^n}{\order  \omega(\Lambda_i)} = p$ pour $0 \leq i \leq m_n-1$,
donc stationnaire \`a la valeur $p^{n-1}$ d\`es l'indice~$0$~; les relations
$\omega(\alpha_{i+1}^p) \in \omega(\Lambda_i)$ permettent, statistiquement, une ``d\'ecroissance'' des
$\delta_p(\alpha_{i})$, ce qui semble contradictoire avec un nombre de pas 
$m_n \geq n +  \nu$ d\'ecoulant de l'hypoth\`ese $\lambda = 1$, $v_p(\order  {\mathcal T}_k)=1$
(cf. Th\'eor\`eme \ref{O}).

\smallskip
Une autre fa\c con d'aborder ces questions heuristiques est la suivante 
(avec les m\^emes hypoth\`eses que ci-dessus)~:

\smallskip
Repr\'esentons la classe de ${\mathfrak A}$ dans $K$ par un id\'eal premier ${\mathfrak L}$ de $K$~; 
on a ${\rm N}_{K/k}({\mathfrak L}) = {\mathfrak l}^{f^n} =: (\beta)^{f^n} =: (\alpha)$ 
o\`u ${\mathfrak l} = (\beta)$ est l'id\'eal premier de $k$ en-dessous de ${\mathfrak L}$ et $f^n$ 
son degr\'e r\'esiduel dans $K/k$ (autrement dit, $\alpha = \beta^{f^n} \!\!\cdot \eta$, $\eta \in E_k$). 
Or la condition normique $\delta_{\mathfrak p}(\alpha)=0$ ne peut
avoir lieu que si $f^n = 1$ (i.e., ${\mathfrak l}$ totalement d\'ecompos\'e dans $K/k$),
puisque $\delta_{\mathfrak p}(\eta) \geq 1$ par hypoth\`ese sur $\delta_{\mathfrak p}(\varepsilon)$~;
ainsi $\delta_{\mathfrak p}(\alpha) = 0$ implique, pour $\ell$ en-dessous de 
${\mathfrak l}$, $l^{p-1} \equiv 1 \pmod {p^{n+1}}$.
Mais on montrera au \S\,\ref{L} au moyen du th\'eor\`eme de 
Tchebotarev qu'une classe de $k_n$ est toujours (donc {\it n\'ecessairement}) 
repr\'esentable par un ${\mathfrak L}$ totalement d\'ecompos\'e dans $K/k$.

\smallskip
On a donc, sous l'hypoth\`ese $\order {\mathcal T}_k =R_k = p\ \&\ \lambda =1$, 
$\order (M_{i+1}/M_i) = p$ pour $O(n)$ pas, alors que l'on peut conjecturer que
$M=M_{O(1)}$ pour $n\to\infty$.

\smallskip
Le troisi\`eme programme du \S\,\ref{prog}, donne pour $m=103$~:
$$\Cl_{k_1} \simeq \Z/3\,\Z \oplus \Z/3\,\Z \ \,\hbox{(i.e., $M_2 \ne M_1$ 
au premier \'etage)}. $$

Le $S_{k_1}$-groupe des classes de $k_1$ est d'ordre $3$,
ce qui donne $\order \cl_{k_1}(S_{k_1})=3$, et le point (iii) du Th\'eor\`eme
\ref{regulateur} pour $n_0=e=1$ conduit \`a
$\lambda=\mu=0$~; pour ce calcul, rajouter au programme~:
\footnotesize
\begin{verbatim}
L=bnfinit(R,1); Su=bnfsunit(L,idealprimedec(L,p)); 
print(component(component(Su,5),1))
\end{verbatim}
\normalsize
Par des m\'ethodes de type ``Spiegelungssatz'' on montre aussi que 
$\lambda =0$ (cf. \cite{BaN}, \cite{CN}, \cite{FT1}). On peut v\'erifier que
 $\Cl_{k_2} \simeq \Cl_{k_3} \simeq \Z/3\,\Z \oplus \Z/3\,\Z$.

\medskip
La section suivante est l'\'etude, \`a $i$ fix\'e, de la $n$-suite $\order M^n_i$.

\section{Heuristiques sur les filtrations dans 
$k_\infty/k$} \label{nsuite}\label{sect7}
Ici, contrairement aux \'etudes pr\'ec\'edentes, on fixe l'\'etape $i$ des 
algorithmes et on consi\-d\`ere les $n$-suites des groupes 
$M^n_i := \cl_{k_n} ({\mathcal I}^n_i)$ et $\Lambda_i^n$.
On \'etudie alors les entiers $\order  \big( M^n_{i+1} / M^n_i \big)$
pour $i \geq 0$ fix\'e et $n \to \infty$ \`a partir de leurs deux facteurs 
donn\'es par la Formule \eqref{eq5}.

\subsection{Approche g\'en\'erale}\label{appgene}
On a, pour tout $n \geq 0$, le diagramme suivant o\`u les normes 
${\rm N}_{k_{n+1}/k_n}$, d\'efinies sur $M^{n+1}$ et $(M^{n+1})^{(1-\sigma_{n+1})^i}$, sont 
surjectives, mais non celle d\'efinie sur $M_i^{n+1}$ (qui peut \^etre ni injective ni surjective)~:
\footnotesize
\vspace{-0.15cm}
$$\begin{array}{ccccccccc}  
1 \!\!\!\! & \too & M_i^{n+1} & \toooo & M^{n+1} &  \stackrel{(1-\sigma_{n+1})^i}{\toooo} 
&  (M^{n+1})^{(1-\sigma_{n+1})^i} & \!\!\!\! \too 1 \\   \vspace{-0.3cm}   \\
& &   \Big \downarrow& & \hspace{-1.4cm}  {\rm N}_{k_{n+1}/k_n} \Big \downarrow 
\hspace{-0.29cm} \downarrow& 
&\hspace{-1.4cm}  {\rm N}_{k_{n+1}/k_n} \Big \downarrow 
\hspace{-0.29cm} \downarrow &  \\  \vspace{-0.4cm}   \\
1 \!\!\!\! &  \too   & M_i^n  
& \toooo &  M^n & \stackrel{(1-\sigma_n)^i}{\toooo} & \!\!\!\!  (M^n)^{(1-\sigma_n)^i} & \too 1 \,.
\end{array} $$

\normalsize
On a ${\rm N}_{k_{n+1}/k_n} (M_i^{n+1})  \subseteq M_i^n$~; donc, pour tout
id\'eal ${\mathfrak A}^{n+1} \in {\mathcal I}^{n+1}_i$, on peut \'ecrire
${\rm N}_{k_{n+1}/k_n}({\mathfrak A}^{n+1}) = (\alpha^n) \, {\mathfrak A}^n$, o\`u
$\alpha^n \in k_n^\times$ et ${\mathfrak A}^n \in {\mathcal I}^n_i$, auquel cas,
{\it en modifiant} ${\mathcal I}^n_i$ modulo des id\'eaux principaux de $k_n$, 
on peut supposer ${\rm N}_{k_{n+1}/k_n} ({\mathcal I}^{n+1}_i ) \subseteq  
{\mathcal I}^n_i$ et par cons\'equent ${\rm N}_{k_{n+1}/k} ({\mathcal I}^{n+1}_i ) 
\subseteq {\rm N}_{k_{n}/k} ({\mathcal I}^n_i)$~; ceci revient \`a modifier 
$\Lambda^{n}_i = \{ x\in k^\times,  (x) \in {\rm N}_{k_n/k}({\mathcal I}^n_i ) \}$ 
modulo des normes d'\'el\'ements de $k_n^\times$ ce qui laisse invariant 
$(\Lambda^{n}_i : \Lambda^{n}_i \cap {\rm N}_{k_{n}/k}(k_n^\times))$.
On peut alors supposer que, pour tout $h\geq 1$, on a~:
\begin{equation}\label{incl}
E_k \subseteq  \Lambda^{n+h}_i \subseteq \cdots 
\subseteq \Lambda^{n+1}_i \subseteq \Lambda^{n}_i. 
\end{equation}

\begin{lemma} \label{lem1} Pour tout $i \geq 0$ fix\'e, les $\order  \big( M^n_{i+1} / M^n_i \big)$ 
forment une $n$-suite croissante stationnaire de diviseurs de $\order {\mathcal T}_k$, et
les $\order  M^n_i$ d\'efinissent une $n$-suite {\it croissante} stationnaire d'entiers.
\end{lemma}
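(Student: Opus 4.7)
Le plan consiste \`a appliquer la formule \eqref{eq5} et \`a analyser s\'epar\'ement la monotonie en $n$ de chacun des deux facteurs, en exploitant la compatibilit\'e ${\rm N}_{k_{n+1}/k_n}({\mathcal I}^{n+1}_i) \subseteq {\mathcal I}^n_i$ (\`a id\'eaux principaux pr\`es) et la cha\^\i ne \eqref{incl} \'etablies juste avant l'\'enonc\'e. Pour le premier facteur $\ds \order \Cl_k / \order {\rm N}_{k_n/k}(M^n_i)$, la composition des normes donne imm\'ediatement ${\rm N}_{k_{n+1}/k}(M^{n+1}_i) = {\rm N}_{k_n/k}\circ {\rm N}_{k_{n+1}/k_n}(M^{n+1}_i) \subseteq {\rm N}_{k_n/k}(M^n_i)$, de sorte que $\order {\rm N}_{k_n/k}(M^n_i)$ est une $n$-suite d\'ecroissante (au sens de la divisibilit\'e) et ce facteur est donc croissant en $n$.

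Pour le second facteur $\ds p^{n \cdot (d-1)}/(\Lambda_i^n : \Lambda_i^n \cap {\rm N}_{k_n/k}(k_n^\times))$, j'utiliserais la suite exacte de la Remarque \ref{rematheta} conjointement avec le Th\'eor\`eme \ref{regulateur}\,(ii), qui donne $(E_k : E_k \cap {\rm N}_{k_n/k}(k_n^\times)) = p^{n\cdot(d-1)}/R_k$ pour $n \geq e$, afin de r\'e\'ecrire ce facteur sous la forme $R_k / \order \theta_n(\Lambda_i^n)$. Par le Th\'eor\`eme \ref{constant}\,(i), pour $n \geq e$ les groupes ${\rm Gal}(H_{k_n/k}/k_n H_k)$ sont tous canoniquement isomorphes au groupe fixe ${\rm Gal}(H_k^{\rm pr}/k_\infty H_k) \simeq U_k^* /\overline E_k$. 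Sous cette identification, la compatibilit\'e des symboles de Hasse avec la restriction, combin\'ee \`a l'inclusion $\Lambda_i^{n+1} \subseteq \Lambda_i^n$, entra\^\i ne $\theta_{n+1}(\Lambda_i^{n+1}) \subseteq \theta_n(\Lambda_i^n)$~; l'ordre $\order \theta_n(\Lambda_i^n)$ est donc d\'ecroissant en $n$, et le second facteur est croissant pour $n \geq e$.

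En combinant ces deux monotonies, $\order (M^n_{i+1}/M^n_i)$ est, pour $n \geq e$, une $n$-suite croissante de diviseurs de $\order {\mathcal T}_k$, donc born\'ee, donc stationnaire. La seconde assertion suit par r\'ecurrence imm\'ediate sur $i$ de la relation $\order M^n_i = \prd_{j<i} \order(M^n_{j+1}/M^n_j)$~: produit fini de $n$-suites croissantes stationnaires, born\'e par $(\order {\mathcal T}_k)^i$, il est lui-m\^eme croissant et stationnaire. L'obstacle principal est que, puisque les $\Lambda_i^n$ varient avec $n$, la comparaison des $\theta_n(\Lambda_i^n)$ pour des indices distincts exige de transporter les calculs dans le groupe fixe $U_k^*/\overline E_k$ via les isomorphismes de restriction du Th\'eor\`eme \ref{constant}~; ceci n'est pleinement l\'egitime qu'\`a partir de $n = e$, mais n'affecte pas la stationarit\'e asymptotique recherch\'ee.
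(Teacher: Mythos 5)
Votre d\'emonstration est correcte et aboutit \`a la m\^eme conclusion, mais elle emprunte, pour le facteur normique, une route diff\'erente de celle du texte. Pour le premier facteur $\order \Cl_k/\order {\rm N}_{k_n/k}(M^n_i)$, votre argument (composition des normes, d'o\`u ${\rm N}_{k_{n+1}/k}(M^{n+1}_i) \subseteq {\rm N}_{k_n/k}(M^n_i)$) est exactement celui du texte. Pour le second facteur, le texte proc\`ede de fa\c con plus \'el\'ementaire~: il compare directement $\order \omega_n(\Lambda_i^n)$ et $\order \omega_{n+1}(\Lambda_i^{n+1})$ en utilisant uniquement l'inclusion $\Lambda_i^{n+1} \subseteq \Lambda_i^n$ (Relation \eqref{incl}) et la restriction des symboles de Hasse $\Omega(k_{n+1}/k) \rightarrow \Omega(k_n/k)$, de noyau d'ordre $p^{d-1}$, ce qui donne la croissance de $p^{n\cdot(d-1)}/\order \omega_n(\Lambda_i^n)$ pour tout $n \geq 0$, sans seuil ni recours au r\'egulateur. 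Vous r\'e\'ecrivez au contraire ce facteur sous la forme $R_k/\order \theta_n(\Lambda_i^n)$ via la suite exacte de la Remarque \ref{rematheta} et le Th\'eor\`eme \ref{regulateur} (ii), puis vous faites d\'ecro\^itre $\order \theta_n(\Lambda_i^n)$ gr\^ace \`a la stabilisation des corps des genres (Th\'eor\`emes \ref{constant} et \ref{theta})~; c'est correct, mais seulement l\'egitime pour $n \geq e$, de sorte que la croissance de la $n$-suite compl\`ete (telle qu'affirm\'ee dans l'\'enonc\'e) n'est \'etablie chez vous qu'\`a partir du niveau $e$ -- la stationnarit\'e et la divisibilit\'e par $\order {\mathcal T}_k$ n'en souffrent pas, comme vous le signalez, mais la m\'ethode du texte est plus uniforme en $n$ et ind\'ependante de l'exposant $p^e$ et de la conjecture de Leopoldt \`a ce stade. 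La r\'ecurrence finale sur $i$ pour les $\order M^n_i$ est identique dans les deux r\'edactions.
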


\begin{proof} Consid\'erons pour $i \geq 0$ fix\'e la $n$-suite d\'efinie par~:
\begin{equation}\label{cr}
\order  \big( M^n_{i+1} / M^n_i \big)=
\frac{\order  \Cl_{k}}{\order {\rm N}_{k_n/k}( M^n_i) } \cdot  \frac{p^{n \cdot (d -1)} }{ 
(\Lambda_i^n :  \Lambda_i^n \cap {\rm N}_{k_n/k}(k_n^\times))}. 
\end{equation}

Comme ${\rm N}_{k_{n+1}/k} (M_i^{n+1})\! \subseteq\! {\rm N}_{k_{n}/k} (M_i^n)$,
la $n$-suite $\ds \frac{\order \Cl_{k}}{\order {\rm N}_{k_n/k}( M^n_i) } 
=: p^{c_i^n}$ est {\it croissante} stationnaire \`a une valeur maximale not\'ee
$p^{c_i^{\infty}} \mid \order  \Cl_k$.

\smallskip
Le second facteur $\ds \frac{p^{n \cdot (d -1)}}
{(\Lambda_i^n :  \Lambda_i^n \cap {\rm N}_{k_n/k}(k_n^\times))} 
= \frac{p^{n \cdot (d -1)}}{\order \omega_n (\Lambda_i^n)}  =: p^{\rho_i^n}$
d\'efinit une $n$-suite {\it croissante} d'entiers.
En effet, on a $p^{\rho_i^{n+1}-\rho_i^n} = p^{d-1}\cdot \ds
\frac{\order \omega_n(\Lambda_i^n)}{\order \omega_{n+1}(\Lambda_i^{n+1})}$, et
comme on peut supposer que l'on a $\Lambda_i^{n+1} \subseteq \Lambda_i^n$
(cf. Relation \eqref{incl}), alors
$\order \omega_{n+1}(\Lambda_i^{n+1}) \leq \order \omega_{n+1}(\Lambda_i^n)$
puis $p^{\rho_i^{n+1}-\rho_i^n} \geq p^{d-1}\cdot \ds
\frac{\order \omega_n(\Lambda_i^n)}{\order \omega_{n+1}(\Lambda_i^{n})}$~;
dans la restriction des symboles de Hasse
$\Omega(k_{n+1}/k) \too \hspace{-0.5cm} \too \Omega(k_{n}/k)$
(dont le noyau est d'ordre $p^{d-1}$), l'image de $\omega_{n+1}(\Lambda_i^{n})$ 
est $\omega_n(\Lambda_i^n)$, d'o\`u le r\'esultat pour la $n$-suite 
$p^{\rho_i^n}$, stationnaire \`a une 
valeur maximale $p^{\rho_i^{\infty}} \mid R_k$ (Th\'eor\`emes \ref{regulateur}, 
\ref{theta}), et au total le premier point du lemme
en r\'esulte pour la $n$-suite $\order \big( M^n_{i+1} / M^n_i \big)$~; on a 
$\ds \lim_{n \to \infty} \order  \big( M^n_{i+1} / M^n_i \big) = 
p^{c_i^{\infty}} \!\cdot p^{\rho_i^{\infty}} \mid \order  {\mathcal T}_k$.

\smallskip
Enfin, si l'on suppose, par r\'ecurrence, que la $n$-suite $\order  M^n_i$ est
croissante stationnaire, la propri\'et\'e en r\'esulte pour la $n$-suite $\order M^n_{i+1}$.
\end{proof}

\begin{lemma} \label{lem11} Les $i$-suites $p^{c_i^{\infty}}$, $ p^{\rho_i^{\infty}}$, et
$\ds \lim_{n \to \infty} \order (M^n_{i+1} / M^n_i) = p^{c_i^{\infty}} \cdot p^{\rho_i^{\infty}}$ 
sont d\'e\-croissantes stationnaires, respectivement vers un diviseur de $\order \Cl_{k}$,
$R_k$, et $\order  {\mathcal T}_k$.
\end{lemma}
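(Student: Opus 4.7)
Le plan consiste \`a d\'eduire la d\'ecroissance et la stationnarit\'e des $i$-suites
limites $(p^{c_i^\infty})_i$ et $(p^{\rho_i^\infty})_i$ de leurs analogues \`a $n$ fix\'e,
en combinant avec la convergence en $n$ d\'ej\`a \'etablie au Lemme \ref{lem1}~; la
conclusion sur le produit et les trois divisibilit\'es s'en d\'eduira imm\'ediatement.

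Je commencerais par v\'erifier qu'\`a $n$ fix\'e les deux $i$-suites $(p^{c_i^n})_i$ et
$(p^{\rho_i^n})_i$ sont d\'ecroissantes. Pour la premi\`ere, l'inclusion
$M^n_i \subseteq M^n_{i+1}$ (D\'efinition \ref{def1}) donne
${\rm N}_{k_n/k}(M^n_i) \subseteq {\rm N}_{k_n/k}(M^n_{i+1})$, donc
$p^{c^n_{i+1}} \leq p^{c^n_i}$. Pour la seconde, en choisissant syst\'ematiquement
les ${\mathcal I}_i^n$ comme cha\^\i ne croissante en $i$ (ce que l'algorithme du
\S\,\ref{algo} permet par adjonction successive), on a
$\Lambda_i^n \subseteq \Lambda_{i+1}^n$, d'o\`u
$\omega_n(\Lambda_i^n) \subseteq \omega_n(\Lambda_{i+1}^n)$ et
$p^{\rho^n_{i+1}} \leq p^{\rho^n_i}$. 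Je passerais ensuite \`a la limite en $n$~:
par le Lemme \ref{lem1}, il existe $n_0$ \`a partir duquel les $n$-suites en jeu
sont stationnaires, et il suffit de prendre un tel $n$ pour faire passer les
in\'egalit\'es ci-dessus \`a la limite. Comme les $i$-suites r\'esultantes sont
d\'ecroissantes et \`a valeurs enti\`eres positives, elles sont automatiquement
stationnaires~; les divisibilit\'es par $\order \Cl_k$, $R_k$ et
$\order {\mathcal T}_k = \order \Cl_k \cdot R_k$ (relation \eqref{T}) se
transmettent directement du Lemme \ref{lem1} \`a la limite, pour les deux suites
s\'epar\'ees ainsi que pour leur produit.

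La difficult\'e, \`a vrai dire assez mineure, r\'eside dans la justification
de l'inclusion $\Lambda_i^n \subseteq \Lambda_{i+1}^n$~: elle repose sur la
possibilit\'e de choisir les ${\mathcal I}_i^n$ comme cha\^\i ne d'id\'eaux
\'etrangers \`a $p$ croissante en $i$, l\'egitime puisque $\Lambda_i^n$ n'est d\'efini
qu'\`a ${\rm N}_{k_n/k}(k_n^\times)$ pr\`es, et implicite dans l'algorithme du
\S\,\ref{algo}. Une fois ce point admis, tout se r\'eduit \`a un passage \`a la
limite de $n$-suites d'entiers stationnaires.
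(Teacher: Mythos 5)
Votre d\'emonstration est correcte et suit pour l'essentiel la m\^eme voie que celle du texte~: la d\'ecroissance en $i$, \`a $n$ fix\'e, des deux facteurs de \eqref{cr} (via la croissance de ${\rm N}_{k_n/k}(M^n_i)$ et les inclusions $\Lambda_i^n \subseteq \Lambda_{i+1}^n$, d\'ej\`a permises par le choix ${\mathcal I}_i^n \subseteq {\mathcal I}_{i+1}^n$ fait dans le texte), puis le passage \`a la limite pour $n$ assez grand gr\^ace \`a la stationnarit\'e du Lemme \ref{lem1}, ce qui est exactement l'argument esquiss\'e dans l'article. Votre r\'edaction explicite simplement les d\'etails que la preuve originale laisse au lecteur.
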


\begin{proof} Provient facilement de l'expression des deux facteurs de \eqref{cr}
pour $n$ pris assez grand (voir aussi la Remarque \ref{filtration} (iii)).
\end{proof}

\begin{corollary}\label{imin}
Il existe $i_{\rm min} \geq 0$ et des constantes $c \geq 0$ et $\rho \geq 0$
telles que $c_i^{\infty} = c$ et $\rho_i^{\infty} = \rho$ pour tout $i \geq i_{\rm min}$.
\end{corollary}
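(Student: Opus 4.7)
The plan is to derive the corollary directly from Lemma \ref{lem11}, which already asserts that the two $i$-sequences $(p^{c_i^{\infty}})_{i\geq 0}$ and $(p^{\rho_i^{\infty}})_{i\geq 0}$ are separately decreasing and stationary. Since each of these is a decreasing sequence of positive integers (powers of $p$ dividing $\order \Cl_k$ and $R_k$ respectively), stationarity is equivalent to the sequence becoming eventually constant; this is the only fact required.

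More concretely, I would first apply Lemma \ref{lem11} to the $i$-sequence $(p^{c_i^\infty})_i$: there exists $i_1 \geq 0$ and an integer $c \geq 0$ with $p^c \mid \order \Cl_k$ such that $c_i^\infty = c$ for every $i \geq i_1$. I would then apply the same lemma to the $i$-sequence $(p^{\rho_i^\infty})_i$: there exists $i_2 \geq 0$ and $\rho \geq 0$ with $p^\rho \mid R_k$ such that $\rho_i^\infty = \rho$ for every $i \geq i_2$. Setting $i_{\rm min} := \max(i_1, i_2)$ then yields both equalities simultaneously for all $i \geq i_{\rm min}$, which is the statement of the corollary.

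There is essentially no obstacle here: the corollary is a straightforward bookkeeping consequence of Lemma \ref{lem11}. Its role is not to introduce new content but to name the constants $c$ and $\rho$ that govern the stabilized values of the two factors in the decomposition \eqref{cr} of $\order(M^n_{i+1}/M^n_i)$, thereby preparing the heuristic discussion of the stationarity index $i_{\rm min}$ and the limit value $p^{c+\rho} \mid \order \mathcal{T}_k$ used in the sequel.
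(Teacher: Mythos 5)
Votre argument est correct et coïncide avec celui du texte : le corollaire est énoncé sans démonstration séparée précisément parce qu'il découle immédiatement du Lemme \ref{lem11} (chaque $i$-suite étant décroissante stationnaire, on prend pour $i_{\rm min}$ le maximum des deux indices de stabilisation). Rien à redire.
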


Mais la $i$-suite $ p^{c_i^{\infty}} \!\! \cdot p^{\rho_i^{\infty}}$ n'est pas 
n\'ecessairement de limite $1$~; en effet, on a seulement que 
$\ds \lim_{i \to \infty}\big( p^{c_i^{\infty}} \cdot p^{\rho_i^{\infty}} \big) = p^{c+\rho}$ 
divise $\order {\mathcal T}_k$ et il convient d'examiner chacun des deux facteurs. 
Auparavant, donnons la d\'efinition suivante~:

\begin{definition}\label{process}
On dira que le processus limite sur $i$ est fini (ce qui \'equivaut \`a $\lambda = \mu =0$) 
s'il existe $i_0 \in \N$ tel que $p^{c_{i_0}^{\infty}} \cdot p^{\rho_{i_0}^{\infty}} = 1$.
\end{definition}

Dans ce cas on a donc $i_0=i_{\rm min}$, $c=\rho=0$, et
pour tout $n \gg 0$, $\order M^n$ est une constante not\'ee $p^\nu$,
ind\'ependante de $n$.

\subsection{Comportement heuristique de $\omega_n(\Lambda_i^n)
\subseteq \Omega(k_n/k), \  n\to\infty$}\label{comportement1}
Le processus limite sur $i$ peut \^etre infini 
(i.e., $p^{\rho_{i}^{\infty}} \ne 1 \ \forall  i \geq 0$, ou encore $\rho>0$) si pour tout $i$ il existe 
$n \gg 0$ tel que $\omega_n(\Lambda_i^n) \subset \Omega(k_n/k)$ ou encore 
$(\Lambda_i^n : \Lambda_i^n \cap {\rm N}_{k_n/k}(k_n^\times)) < p^{n \cdot (d-1)}$.
Il existerait $n_1 \gg 0$ tel que pour $n \geq n_1$,
$\ds \frac{p^{n\cdot(d-1)}}{\omega_n(\Lambda_i^n)} = p^\rho \ne 1$,
pour $i_{\rm min} \leq i \leq m_n-1$ (Corollaire \ref{imin}). Ceci signifie que
sous l'hypoth\`ese $\lambda \geq 1$ ou $\mu \geq 1$, l'algorithme au niveau $n$
devrait comporter $m_n = O(\lambda\cdot n + \mu \cdot p^n)$ ``$i$-\'etapes'' 
successives (cf. Th\'eor\`eme \ref{O}), avec  $\omega_n(\Lambda_i^n)
\subset \Omega(k_n/k)$, et ceci pour $n$ arbitrairement grand.
Par cons\'equent la finitude du processus limite dans ce cas ne peut provenir que 
de l'heuris\-tique suivante, en raison des propri\'et\'es des quotients de Fermat 
(nous y reviendrons au \S\,\ref{resglob} au moyen d'une justification globale)~:

\begin{conjecture} \label{heur1}
On consid\`ere les groupes $\Lambda_i^n$ associ\'es \`a l'ensemble des algorithmes 
de d\'etermination des sous-groupes $M^n_i$, $0 \leq i \leq m_n$, des filtrations des 
$M^n := \Cl_{k_n}$, $n\geq 0$.
 Alors il existe $i_1$ assez grand, ind\'ependant de $n$, tel que 
$\ds \frac{p^{n \cdot (d -1)}}{(\Lambda_{i_1}^n : \Lambda_{i_1}^n \cap 
{\rm N}_{k_n/k}(k_n^\times))} = 1$ pour $n \to \infty$.
\end{conjecture}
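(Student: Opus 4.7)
The plan is to reduce Conjecture \ref{heur1} to a precise (yet still unproved) statement on the distribution of Fermat quotients of the algorithmic generators, and then to argue its plausibility via the probabilistic model already underlying Sections \ref{hasse}--\ref{quant}. First, I would translate the conclusion into the language of \S\ref{norm}: by the algorithm of \S\ref{algo}, $\Lambda_i^n$ is generated over $E_k$ by elements $\alpha_i^j := {\rm N}_{k_n/k}(\mathfrak{A}_i^j)$ attached to ideals $\mathfrak{A}_i^j$ coprime to $p$, and by the identification $\omega'_n$ of \S\ref{norm}, the image $\omega_n(\Lambda_i^n) \subseteq \Omega(k_n/k) \simeq (\Z/p^n\Z)^{d-1}$ is exactly the $\Z$-span of the vectors $\bigl(\alpha_{\mathfrak{p}}(x)\cdot p^{\delta_{\mathfrak{p}}(x)} \!\!\pmod{p^n}\bigr)_{\mathfrak{p}\mid p}$, subject only to the product formula $\sum_{\mathfrak{p}\mid p}\alpha_{\mathfrak{p}}(x)\,p^{\delta_{\mathfrak{p}}(x)} \equiv 0\pmod{p^n}$. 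Establishing the conjecture then amounts to exhibiting, for some $i_1$ independent of $n$, generators of $\Lambda_{i_1}^n$ whose Fermat-quotient vectors span all of $\Omega(k_n/k)$.

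Next I would invoke the standard heuristic that for a ``generic'' $x \in k^\times$ coprime to $p$, one has $\delta_{\mathfrak{p}}(x) = r$ with probability $(1-1/p)\cdot p^{-r}$; in particular $\delta_{\mathfrak{p}}(x) = 0$ with probability $1-1/p$, in which case $\omega_n(x)$ has maximal order $p^n$ in its $\mathfrak{p}$-component. Since each step $i \mapsto i+1$ with $M^n_{i+1}\ne M^n_i$ contributes genuinely new generators $\alpha_{i+1}^j$, and since by Theorems \ref{regulateur} and \ref{constant} the total ``cost'' of filling $\Omega(k_n/k)$ modulo $\omega_n(E_k)$ is controlled by the finite quotient $U_k^*/\overline E_k$ of exponent $p^e$, one expects that after a bounded number of $i$-steps, depending only on $d$, $p$ and $\order {\mathcal T}_k$ but not on $n$, the accumulated Fermat-quotient vectors already generate $\Omega(k_n/k)$. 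The stabilisation statement of Theorem \ref{theta} (ii) even suggests taking $i_1$ roughly of size $e$.

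The main obstacle, and the reason the statement remains conjectural, is that this argument rests on the quasi-independence of the $\delta_{\mathfrak{p}}(\alpha_i^j)$ as $n$ and $i$ vary. The only algebraic constraint we know on $\bigl(\delta_{\mathfrak{p}}(x)\bigr)_{\mathfrak{p}\mid p}$ is the product formula, but with current techniques nothing excludes systematic divisibilities of the $\alpha_i^j$ by high powers of $\mathfrak{p}\mid p$ that would keep $\omega_n(\Lambda_i^n)$ a proper subgroup of $\Omega(k_n/k)$ for arbitrarily large $i$ and $n$. As the abstract emphasizes, overcoming this would require $p$-adic equidistribution results for norms of ideals in $k_n/k$ of a depth comparable to Leopoldt's conjecture. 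My proposal is therefore necessarily partial: I would carry out the reduction to Fermat quotients rigorously, justify the probabilistic model by the statistical evidence to be compiled in Section \ref{BC}, and frankly leave the independence of the $\delta_{\mathfrak{p}}(\alpha_i^j)$ along the tower as the essential non-rigorous input.
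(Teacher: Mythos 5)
This statement is a \emph{conjecture} (heuristic) in the paper and carries no proof there: the paper's own justification consists precisely of the reduction you describe --- expressing $\omega_n(\Lambda_i^n)\subseteq\Omega(k_n/k)$ through the Fermat quotients $\delta_{\mathfrak p}$ of the algorithmic generators (\S\,\ref{norm}, \S\,\ref{algo}), the probabilistic model constrained only by the product formula, the reduction in \S\,\ref{resglob} (under Hypoth\`ese \ref{hypo}) to a finite domain governed by ${\mathcal T}_k$, and the statistics of Section \ref{BC} --- so your proposal follows essentially the same route, with the same honest acknowledgement that the quasi-independence of the $\delta_{\mathfrak p}(\alpha_i^j)$ along the tower is the unproved input. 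The only slight inaccuracy is your appeal to Th\'eor\`eme \ref{theta} (ii) to suggest $i_1\approx e$: that stabilisation is in the level $n$ (for $n\geq e$), not in the filtration index $i$, so it bounds nothing about $i_1$; the paper itself offers no such bound.
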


\subsection{Comportement heuristique de ${\rm N}_{k_n/k}( M^n_i) 
\subseteq \Cl_k, \  n\to\infty$} \label{comportement2}
Lorsque $\Cl_k \ne 1$, la non finitude du processus limite sur $i$
peut provenir du fait que pour tout $i \in \N$ il existe $n \gg 0$ tel que 
${\rm N}_{k_n/k}( M^n_i) \subset \Cl_{k}$ (i.e., $p^{c_i^\infty} \ne 1$, ou encore $c>0$).
Comme pour l'Heuristique \ref{heur1}, il existerait 
$n_2 \gg 0$ tel que pour $n \geq n_2$, 
$\ds\frac{\order  \Cl_{k}}{\order {\rm N}_{k_n/k}( M^n_{i}) } = 
p^c \ne 1$, pour $i_{\rm min} \leq i \leq m_n-1$,
et une conclusion analogue sur le nombre d'\'etapes $m_n$.

\smallskip
Or on peut se baser sur l'heuristique naturelle suivante stipulant 
que les classes des id\'eaux de $k$ de la forme ${\rm N}_{k_n/k} ({\mathfrak A})$
(ou ${\rm N}_{k_n/k} ({\mathcal L})$ avec ${\mathcal L}$ premier
totalement d\'ecompos\'e dans $k_n/k$) sont al\'eatoires, ind\'ependantes, 
et se r\'epartissent dans le groupe fini $\Cl_k$ selon les probabilit\'es standard 
dans la mesure o\`u ${\rm N}_{k_n/k}(\Cl_{k_n}) = \Cl_k$ (voir de m\^eme 
le \S\,\ref{resglob} pour des justifications pr\'ecises \`a ce sujet)~:

\begin{conjecture} \label{heur2} 
On consid\`ere les groupes ${\rm N}_{k_n/k}(M^n_i)$ associ\'es \`a l'ensemble des 
algorithmes de calcul des sous-groupes $M^n_i$, $0 \leq i \leq m_n$, des filtrations des 
$M^n := \Cl_{k_n}$, $n\geq 0$. Alors il existe $i_2$, assez grand, ind\'ependant de $n$, 
tel que ${\rm N}_{k_n/k}( M^n_{i_2}) = \Cl_k$ pour $n \to \infty$.
\end{conjecture}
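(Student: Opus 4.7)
Le plan que je propose combine le théorème de Chebotarev avec un argument d'équidistribution. J'utilise d'abord le fait que ${\rm N}_{k_n/k} : M^n \too \Cl_k$ est surjective (conséquence classique de la totale ramification de $p$ dans $k_\infty/k$), et le Lemme \ref{lem1} qui assure que la $n$-suite ${\rm N}_{k_n/k}(M^n_i)$ est croissante stationnaire dans $\Cl_k$. Il s'agit donc de trouver, dans $M^n_{i_2}$, des classes dont les normes engendrent $\Cl_k$, pour un $i_2$ borné indépendamment de $n$.

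Soient $c_1, \ldots, c_r$ des générateurs de $\Cl_k$. Par le théorème de Chebotarev appliqué à $k_n H_k/k$ (où $H_k$ est le $p$-corps de classes de Hilbert de $k$, avec $H_k \cap k_n = k$ puisque $H_k/k$ est non ramifiée alors que $k_n/k$ est ramifiée en $p$), on choisit pour chaque $j$ un idéal premier ${\mathfrak l}_j$ de $k$ dans la classe $c_j$ totalement décomposé dans $k_n/k$. Pour un relèvement ${\mathfrak L}_j \mid {\mathfrak l}_j$ dans $k_n$, on a ${\rm N}_{k_n/k}({\mathfrak L}_j) = {\mathfrak l}_j$ et par conséquent ${\rm N}_{k_n/k}\big(\cl_{k_n}({\mathfrak L}_j)\big) = c_j$. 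Le problème se ramène à choisir les ${\mathfrak L}_j$ de sorte que leurs classes $\cl_{k_n}({\mathfrak L}_j)$ soient dans $M^n_{i_2}$ pour un $i_2$ indépendant de $n$.

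Comme ${\mathfrak l}_j$ est totalement décomposé, on dispose de $p^n$ choix de ${\mathfrak L}_j$ au-dessus, qui diffèrent par l'action de $G_n$ et fournissent des classes dans $\Cl_{k_n}$ différant par les classes des idéaux au-dessus de ${\mathfrak l}_j$. L'entier $i$ tel que $\cl_{k_n}({\mathfrak L}_j) \in M^n_i \setminus M^n_{i-1}$ est la profondeur de cette classe dans la filtration, déterminée par l'ordre d'annulation par $(1-\sigma_n)^i$. L'heuristique, qui doit être confortée par les arguments probabilistes de la Section \ref{resglob} et les statistiques de la Section \ref{BC}, est que cette profondeur reste bornée en moyenne par une constante ne dépendant que de $\order {\mathcal T}_k$ et non de $n$, puisque les symboles de restes normiques associés aux ${\mathfrak L}_j$ suivent des lois équidistribuées naturelles dans $\Omega(k_n/k)$ modulo la formule du produit.

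L'obstacle principal est précisément cette borne uniforme sur la profondeur : son établissement rigoureux équivaut en substance à la conjecture de Greenberg elle-même (via le Théorème \ref{O}). Une preuve complète nécessiterait donc les outils $p$-adiques profonds analogues à ceux invoqués dans le cadre de la conjecture de Leopoldt, comme le souligne l'auteur dans l'introduction ; le plan ici présenté ne fait que ramener l'heuristique \ref{heur2} à l'équidistribution des classes normes dans $\Cl_k$ et à l'heuristique complémentaire \ref{heur1} sur les groupes $\Lambda_i^n$, ces deux ingrédients probabilistes étant de nature arithmétique identique (indépendance des quotients de Fermat des ${\mathfrak p}$-associés).
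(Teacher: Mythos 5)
Attention d'abord au statut de l'énoncé : dans le papier, il ne s'agit pas d'un théorème démontré mais d'une \emph{Heuristique} ; le texte ne fournit aucune preuve, seulement une justification probabiliste (\S\,\ref{comportement2} et surtout \S\,\ref{resglob} avec l'Hypoth\`ese \ref{hypo} : la composante ${\mathfrak t}$ de ${\rm N}_{k_n/k}({\mathfrak A})$, écrite modulo les idéaux principaux quasi-infinitésimaux, a un symbole d'Artin équidistribué dans ${\mathcal T}_k$, d'où une classe équidistribuée dans $\Cl_k$), complétée par les statistiques de la Section \ref{BC}. Le raisonnement du papier porte sur les idéaux ${\mathfrak A}_{i+1}$ \emph{produits par l'algorithme lui-même} : à chaque pas, la classe de ${\rm N}_{k_n/k}({\mathfrak A}_{i+1})$ vient s'ajouter au sous-groupe croissant ${\rm N}_{k_n/k}(M_i^n) \subseteq \Cl_k$, et comme ces classes se comportent comme des tirages aléatoires uniformes dans le groupe fini $\Cl_k$, la suite croissante atteint $\Cl_k$ en un nombre de pas borné indépendamment de $n$ — c'est exactement le contenu de $i_2$. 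Autrement dit, l'heuristique ne demande que le fait standard qu'un groupe abélien fini est engendré par un nombre borné d'éléments aléatoires.

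Votre proposition prend une route différente et, telle quelle, comporte une lacune réelle : vous fixez des générateurs $c_j$ de $\Cl_k$, les représentez par des premiers ${\mathfrak l}_j$ totalement décomposés dans $k_n/k$ (le recours à Tchebotarev dans $k_n H_k/k$ est correct, et rejoint le \S\,\ref{L} du papier), puis vous exigez que les classes $\cl_{k_n}({\mathfrak L}_j)$ soient de \emph{profondeur bornée} dans la filtration $(M_i^n)_i$. Cette condition de profondeur bornée pour des classes choisies à l'avance n'est pas justifiée par l'équidistribution des quotients de Fermat ni par les symboles de Hasse dans $\Omega(k_n/k)$ : la profondeur d'une classe est gouvernée par l'opération de $(1-\sigma_n)^i$, et rien dans votre argument ne la relie aux $\delta_{\mathfrak p}$. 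Comme vous le reconnaissez vous-même, ce pas est essentiellement l'énoncé à établir (voire plus fort : le papier n'a besoin que de la croissance de ${\rm N}_{k_n/k}(M_i^n)$ alimentée par des classes normes aléatoires, pas du contrôle de la profondeur de classes prescrites), de sorte que votre réduction est en grande partie circulaire et ne recoupe pas la justification que donne le papier via l'Hypoth\`ese \ref{hypo} et le cadre de la $p$-ramification Abélienne.
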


\begin{remark} Dans le cas o\`u $p$ est non d\'ecompos\'e dans le corps
totalement r\'eel $k$ et en supposant $k_\infty/k$ totalement ramifi\'ee en l'unique 
${\mathfrak p} \mid p$, le crit\`ere de Greenberg est que $\lambda = \mu = 0$ 
si et seulement si $\Cl_k$ capitule dans un $k_{n_0}$ 
\cite[Theorem 1]{Gre} (crit\`ere ind\'ependant de la conjecture de Leopoldt).
On a dans ce cas, pour tout $n \geq 0$ et tout $i$, $0 \leq i \leq m_n-1$~: 
$$\order (M_{i+1}^n/M_i^n) =\ds \frac{\order \Cl_{k}}{\order {\rm N}_{k_n/k}( M^n_i)}, $$

le ``facteur normique'' \'etant trivial en raison de la formule du produit. 

\smallskip
La $i$-suite des $\Lambda_i^n$ ($n$ fix\'e) est alors telle que tout $x_i \in \Lambda_i^n$ 
est de la forme ${\rm N}_{k_n/k}(y_i)$, $y_i \in k_n^\times$, et 
$(x_i) = {\rm N}_{k_n/k}({\mathfrak B}_i)$ pour un id\'eal ${\mathfrak B}_i$ de ${\mathcal I}_i^n$
de sorte qu'il existe ${\mathfrak A}_{i+1}$ \'etranger \`a $p$ de $k_n$ tel que 
${\mathfrak A}_{i+1}^{1-\sigma_n} \cdot (y_i) = {\mathfrak B}_i$~; d'o\`u 
${\rm N}_{k_n/k}({\mathfrak A}_{i+1})$ dont la classe dans 
${\rm N}_{k_n/k}(M_{i+1}^n) \supseteq {\rm N}_{k_n/k}(M_{i}^n)$ assure la 
croissance de la $i$-suite $\order {\rm N}_{k_n/k}(M^n_i) \mid \order \Cl_k$. 

\smallskip
L'algorithme est donc identique et d\'efinit, pour chaque $n \gg 0$, la $i$-suite croissante 
des diviseurs $\order {\rm N}_{k_n/k}(M_i^n)$ de $\order \Cl_k$ pour laquelle, sous l'hypoth\`ese 
$\lambda \geq 1$ ou $\mu \geq 1$, l'algorithme devrait comporter 
un palier de $O(\lambda\cdot n + \mu \cdot p^n)$ ``$i$-\'etapes'' cons\'ecutives 
pour lesquelles ${\rm N}_{k_n/k}(M_i^n) \subset \Cl_k$, 
ce qui est totalement irr\'ealiste lorsque $n\to\infty$ comme expliqu\'e au point pr\'ec\'edent,
ce qui sera analys\'e au \S\,\ref{resglob}.

\smallskip
Ce premier cas de la conjecture de Greenberg est donc susceptible de l'Heuristique 
\ref{heur2} pr\'ec\'edente affirmant l'existence de $i_2$ assez grand tel que 
${\rm N}_{k_n/k}( M^n_{i_2}) = \Cl_k$ pour $n\to\infty$. Comme expliqu\'e dans
l'Introduction, le principe ``algorithmique'' adopt\'e couvre grosso modo la totalit\'e
de la conjecture de Greenberg pour les corps totalement r\'eels.
\end{remark}

\subsection{Principales propri\'et\'es $p$-adiques 
dans $k_\infty/k$} \label{autres}
On a les propri\'et\'es locales et globales suivantes qui confortent l'absence d'obstruction
dans les consid\'erations et heuristiques probabilistes pr\'ec\'edentes sur les propri\'et\'es 
normiques des nombres et id\'eaux dans $k_\infty/k$~; le \S\,\ref{resglob} montrera,
de fa\c con essentielle, que l'analyse probabiliste dans la tour
est de fait de ``type fini'' et ne concerne que l'invariant ${\mathcal T}_k$.

\subsubsection{Propri\'et\'es locales dans $k_\infty/k$} 
On consid\`ere les $d$ normes locales dans $k_n/k$, en les ${\mathfrak p} \mid p$,
$n \geq 0$ fix\'e~; on d\'e\-signe par
${\mathfrak p}_n$ l'id\'eal premier de $k_n$ au-dessus de ${\mathfrak p}$.
La proposition suivante est une formulation exclusivement locale des 
calculs de la Section \ref{hasse}~:

\begin{proposition}\label{ccloc} 
Soit $\pi_{n}$ (resp. $\pi_{n+h}$) une uniformisante du compl\'et\'e 
$\overline k_{n}$ de $k_{n}$ en ${\mathfrak p}_{n} \mid p$ (resp. de celui de $k_{n+h}$ 
en ${\mathfrak p}_{n+h} \mid p$). 

\smallskip
(i) Pour tout $h\geq 1$ et toute unit\'e locale $\alpha_{n+h}$ de $\overline k_{n+h}$, 
il existe une unit\'e locale $\alpha_n$ de $\overline k_n$ telle que l'on ait
${\rm N}_{\overline k_{n+h}/\overline k_n} (1+ \alpha_{n+h} \cdot \pi_{n+h}) 
= 1+ \alpha_n \cdot \pi_{n}$.

(ii) Pour tout $h\geq 1$, il existe une unit\'e locale $\beta_n$ de $\overline k_n$ 
telle que l'on ait $(1+ \alpha_{n+h} \cdot \pi_{n+h})^{p^h} = 1+ \beta_n \cdot \pi_{n}$.

\smallskip
(iii) Soient $U_{{\mathfrak p}_n}$ et $U_{\mathfrak p}$ les groupes 
d'unit\'es locales principales de $\overline k_n$ et $\overline k$. 
Alors l'indice de ${\rm N}_{\overline k_n/ \overline k} (U_{{\mathfrak p}_n})$
dans $U_{\mathfrak p}$ est \'egal \`a $p^n$.
\end{proposition}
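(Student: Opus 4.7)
La stratégie est de traiter (i)--(ii) par des développements locaux directs dans la tour totalement ramifiée $\overline k_{n+h}/\overline k_n$, et (iii) par la théorie locale du corps de classes jointe au fait classique que $p$ est une norme dans la $\Z_p$-extension cyclotomique de $\Q_p$. L'ingrédient le plus délicat sera (iii), qui nécessite à la fois la formule d'indice de la réciprocité locale et la construction explicite d'un antécédent de $p$ par la norme.

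Pour (i), on développe
\[
{\rm N}_{\overline k_{n+h}/\overline k_n}(1+\alpha_{n+h}\pi_{n+h}) = \prod_{\sigma \in {\rm Gal}(\overline k_{n+h}/\overline k_n)} (1+\sigma(\alpha_{n+h})\,\sigma(\pi_{n+h})).
\]
Chaque $\sigma(\pi_{n+h})$ étant une uniformisante de $\overline k_{n+h}$, chaque facteur appartient à $1+{\mathfrak m}_{\overline k_{n+h}}$, et le produit s'écrit $1+y$ avec $y\in{\mathfrak m}_{\overline k_{n+h}}$. Comme la norme est à valeurs dans $\overline k_n$, $y$ appartient à ${\mathfrak m}_{\overline k_{n+h}}\cap \overline k_n={\mathfrak m}_{\overline k_n}=\pi_n\,{\mathcal O}_{\overline k_n}$ (la ramification totale impose $v_n(x)=v_{n+h}(x)/p^h$ pour $x\in\overline k_n$, donc toute valuation strictement positive est $\ge 1$), d'où $y=\alpha_n\,\pi_n$ avec $\alpha_n\in{\mathcal O}_{\overline k_n}$.

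Pour (ii), on applique la formule du binôme~:
\[
(1+\alpha_{n+h}\pi_{n+h})^{p^h}-1 = \sum_{k=1}^{p^h}\binom{p^h}{k}\alpha_{n+h}^k\,\pi_{n+h}^k.
\]
La ramification totale de $\overline k_{n+h}/\Q_p$ donne $v_{n+h}(p)=p^{n+h}$, et la formule de Kummer $v_p\!\binom{p^h}{k}=h-v_p(k)$ pour $1\le k\le p^h$ fournit la valuation $v_{n+h}$ du $k$-ième terme égale à $(h-v_p(k))\,p^{n+h}+k\ge p^h$ (avec égalité pour $k=p^h$). Comme $v_{n+h}(\pi_n)=p^h$, la somme vaut $\beta\cdot\pi_n$ pour un $\beta$ entier de $\overline k_{n+h}$, ce qui donne (ii).

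Pour (iii), la théorie locale du corps de classes appliquée à l'extension cyclique totalement ramifiée $\overline k_n/\overline k$ de degré $p^n$ donne $\overline k^\times/{\rm N}_{\overline k_n/\overline k}(\overline k_n^\times)\simeq {\rm Gal}(\overline k_n/\overline k)\simeq \Z/p^n\Z$. Via la décomposition $\overline k^\times = p^{\Z}\times\mu_{p-1}\times U_{\mathfrak p}$ ($p$ impair), on vérifie que $p^\Z$ et $\mu_{p-1}$ sont contenus dans le groupe des normes~: le premier via l'identité classique ${\rm N}_{\Q_p(\zeta_{p^{n+1}})/\Q_p}(1-\zeta_{p^{n+1}})=p$, factorisée en ${\rm N}_{\overline k_n/\overline k}\circ {\rm N}_{\Q_p(\zeta_{p^{n+1}})/\overline k_n}$ pour le plongement $\overline k_n\subset \Q_p(\zeta_{p^{n+1}})$~; le second parce que ${\rm N}_{\overline k_n/\overline k}(\omega)=\omega^{p^n}=\omega$ pour $\omega\in\mu_{p-1}$ (puisque $p^n\equiv 1\pmod{p-1}$). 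L'indice se concentre alors sur le facteur des unités principales, d'où $[U_{\mathfrak p}:{\rm N}_{\overline k_n/\overline k}(U_{{\mathfrak p}_n})]=p^n$. L'obstacle principal se réduit ainsi à la production d'un antécédent explicite de $p$, gérée par le plongement dans la tour cyclotomique complète de degré $(p-1)\,p^n$.
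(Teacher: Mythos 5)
Your parts (ii) and (iii) are essentially sound and are in the spirit of the paper, which disposes of (ii) as immediate and of (iii) by local class field theory in the totally ramified extension $\overline k_n/\overline k$. Two small remarks there. In (ii) the displayed conclusion of the proposition has to be read with $\beta_n$ an integer (in fact a unit) of $\overline k_{n+h}$, not of $\overline k_n$, since the left-hand side does not lie in $\overline k_n$; that is exactly what your valuation count via Kummer's formula proves, and it is correct. In (iii), the sentence ``l'indice se concentre sur le facteur des unit\'es principales'' hides the one verification that should be written out: if $u\in U_{\mathfrak p}$ equals ${\rm N}_{\overline k_n/\overline k}(x)$, then $x$ is a unit, $x=\omega x_1$ with $\omega\in\mu_{p-1}$ and $x_1\in U_{{\mathfrak p}_n}$, and ${\rm N}(x)=\omega^{p^n}{\rm N}(x_1)=\omega\,{\rm N}(x_1)\equiv\omega\pmod{\mathfrak p}$ forces $\omega=1$; hence $U_{\mathfrak p}\cap{\rm N}(\overline k_n^\times)={\rm N}(U_{{\mathfrak p}_n})$ and the index is indeed $p^n$. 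With that line added, (iii) is complete.

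The genuine gap is in (i). The statement asserts that $\alpha_n$ is a local \emph{unit}, i.e.\ that the norm of a principal unit of level exactly one again has level exactly one; your argument only shows that the norm lies in $1+\mathfrak m_{\overline k_n}$, i.e.\ that $\alpha_n$ is an \emph{integer} of $\overline k_n$, which is the trivial half (and would not require the references the paper gives). The nontrivial content is precisely why the paper settles (i) by citing Serre, \emph{Corps Locaux}, Ch.\,V, \S\,3, Lemmes 4 et 5: for one layer $L/K=\overline k_{m+1}/\overline k_m$ one writes ${\rm N}(1+x)=1+{\rm Tr}(x)+{\rm N}(x)+\cdots$, bounds ${\rm Tr}(x)$ and the intermediate terms by the trace ideal ${\rm Tr}(\mathfrak m_L^i)=\mathfrak m_K^{\lfloor (i+d)/p\rfloor}$ with $d=v_L(\mathfrak d_{L/K})$, and uses that for $m\ge 1$ the ramification break $t_m=(p^{m+1}-1)/(p-1)$ of $\overline k_{m+1}/\overline k_m$ (computed from the conductor--discriminant formula) exceeds $1$, so that ${\rm Tr}(\mathfrak m_L)\subseteq\mathfrak m_K^{p^m}\subseteq\mathfrak m_K^2$ and the term ${\rm N}(a){\rm N}(\pi_L)$, of valuation exactly one, dominates; iterating over the $h$ layers gives a unit $\alpha_n$. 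Your proof never sees the break, and it cannot be repaired without it: at the bottom layer the unit conclusion actually fails, because a principal unit of $\Q_p$ which is a norm from $\overline k_h$ lies in $1+p^{h+1}\Z_p$ (the norm group of $\overline k_h/\Q_p$ is $\langle p\rangle\times\mu_{p-1}\times(1+p^{h+1}\Z_p)$); concretely, for $p=3$ and $\pi_1=2-\zeta_9-\zeta_9^{-1}$ one gets ${\rm N}_{\overline k_1/\Q_3}(1+\pi_1)=19=1+6\cdot 3$ with $6$ not a unit, the level-one contributions of ${\rm Tr}$ and ${\rm N}$ cancelling at the break. Since your membership argument applies verbatim to that case, it cannot yield the unit assertion; as written, your (i) proves a strictly weaker statement, and you should add the trace-ideal (or ramification-break) input, restricting to $n\ge 1$ where it holds.
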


\begin{proof} Utiliser  \cite[Ch.\,V, \S\,3, Lemme 5 \& Ch.\,V, \S\,3, Lemme~4]{Se1}
pour le premier point. Le second est imm\'ediat et le dernier est la th\'eorie du corps 
de classes local dans l'extension totalement ramifi\'ee $\overline k_n/\overline k$.
\end{proof}

\subsubsection{Propri\'et\'es globales en $p$-ramification 
Ab\'elienne dans $k_\infty/k$} \label{resglob}
On consid\`ere (sous la conjecture de Leopoldt pour $p$ dans $k_\infty$)
le sch\'e\-ma suivant o\`u l'on pose $K=k_n$ pour simplifier et o\`u 
$H_k^{\rm pr}$ et  $H_K^{\rm pr}$ sont les pro-$p$-extensions Ab\'eliennes 
$p$-ramifi\'ees maximales de $k$ et $K$~; on d\'esigne par $F$ 
une extension de $k$, contenant le $p$-corps de classes de Hilbert $H_k$ de $k$,
telle que $H_k^{\rm pr}$ soit le compos\'e direct de $F$ et $k_\infty$ sur $k$~:
\unitlength=0.94cm 
$$\vbox{\hbox{\hspace{-2.8cm}  \begin{picture}(11.5,5.9)
\put(6.4,4.50){\line(1,0){1.3}}
\put(6.4,2.50){\line(1,0){1.3}}
\put(6.1,0.45){\line(1,0){1.6}}
\put(8.5,4.50){\line(1,0){2.2}}
\put(3.85,4.50){\line(1,0){1.4}}
\put(3.8,2.50){\line(1,0){1.5}}
\put(3.8,0.45){\line(1,0){1.5}}
\put(9.2,4.18){\footnotesize${\mathcal C}_{K}$}
\bezier{350}(3.8,4.8)(7.6,6.6)(11.0,4.8)
\put(7.4,5.8){\footnotesize${\mathcal T}_{K}$}
\bezier{350}(3.8,4.6)(6.0,5.6)(7.8,4.6)
\put(5.65,5.2){\footnotesize${\mathcal T}_{k}$}
\put(3.50,2.9){\line(0,1){1.25}}
\put(3.50,0.9){\line(0,1){1.25}}
\put(5.7,2.9){\line(0,1){1.25}}
\put(5.7,0.9){\line(0,1){1.25}}
\put(8.0,0.9){\line(0,1){1.25}}
\put(8.0,2.9){\line(0,1){1.25}}
\put(4.4,4.15){\footnotesize$\Cl_k$}
\bezier{400}(3.8,2.56)(8.0,3.0)(11.0,4.3)
\put(9.4,3.45){\footnotesize${\mathcal A}_{K}$}
\bezier{400}(3.8,0.55)(6.8,0.8)(7.85,4.3)
\put(6.35,1.5){\footnotesize${\mathcal A}_k$}
\bezier{400}(3.8,2.6)(6.8,3.3)(7.8,4.3)
\put(6.6,3.3){\footnotesize${\rm N}{\mathcal A}_{K}$}
\put(10.85,4.4){$H_{K}^{\rm pr}$}
\put(5.3,4.4){$k_\infty H_k$}
\put(7.8,4.4){$H_k^{\rm pr}$}
\put(6.5,4.15){\footnotesize$U_k^*\!/\!\overline E_k$}
\put(3.3,4.4){$k_\infty$}
\put(5.45,2.4){$K H_k$}
\put(7.75,2.4){$K F$}
\put(5.5,0.4){$H_k$}
\put(7.8,0.4){$F$}
\put(3.3,2.4){$K$}
\put(3.4,0.40){$k$}
\put(3.0,1.5){\footnotesize $p^n$}
\bezier{400}(8.2,0.6)(9.4,2.4)(8.2,4.2)
\put(8.9,2.4){\footnotesize$\Gamma_\infty$}
\end{picture}   }} $$
\unitlength=1.0cm

On consid\`ere les symboles d'Artin $\big( \frac{H_k^{\rm pr}/k}{\cdot} \big)$ et 
$\big( \frac{H_{K}^{\rm pr}/K}{\cdot} \big)$
sur ${\mathcal I}_k :=  I_k \otimes \Z_p$ et ${\mathcal I}_K :=I_{K} \otimes \Z_p$,
o\`u $I_k$ et $I_{K}$ sont les groupes des id\'eaux \'etrangers \`a $p$ de $k$ et $K$.
Leurs images sont respectivement les groupes de Galois ${\mathcal A}_k$ et ${\mathcal A}_{K}$~;
les noyaux de ces symboles d'Artin sont les groupes d'id\'eaux principaux infinit\'esimaux 
${\mathcal P}_{k, \infty} \subset {\mathcal I}_k$ et 
${\mathcal P}_{K, \infty}  \subset {\mathcal I}_K$, o\`u ${\mathcal P}_{k, \infty}$ est 
l'ensemble des id\'eaux $(x_\infty)$ o\`u $x_\infty \in k^\times  \otimes \Z_p$ est d'image 
triviale dans $U_k$, et de m\^eme avec $K$ (cf. \cite[Theorem III.2.4, Proposition III.2.4.1]
{Gra1}, \cite[\S\,2]{J2}). 

\smallskip
Les groupes ${\mathcal T}_k$ et ${\mathcal T}_K$ sont les groupes 
de torsion respectifs de ${\mathcal A}_k$ et ${\mathcal A}_{K}$.

\smallskip
L'application ${\rm N}_{K/k}$ sur  ${\mathcal I}_K$ induit la restriction 
${\mathcal A}_{K} \too {\mathcal A}_k$ qui s'exprime par la suite exacte suivante,
o\`u $\Gamma_\infty = {\rm Gal}(H_k^{\rm pr}/F)$ et o\`u ${\rm N}_{K/k}({\mathcal A}_{K})$
est d'indice $p^n$ dans ${\mathcal A}_k$~:
\begin{equation*}
1 \too {\mathcal C}_{K} \tooo {\mathcal A}_{K} \tooo {\rm N}_{K/k}({\mathcal A}_{K}) =  
\Gamma_\infty^{p^n} \oplus {\mathcal T}_{k} \too 1.
\end{equation*}

Il en r\'esulte que si ${\mathfrak A} \in I_{K}$ (id\'eal ordinaire vu dans ${\mathcal I}_K$), 
${\rm N}_{K/k}({\mathfrak A})$ peut s'\'ecrire, 
modulo des id\'eaux principaux infinit\'esimaux, sous la forme~:

\smallskip
\centerline{${\rm N}_{K/k}({\mathfrak A}) = {\mathfrak a}^{p^n}\!\! \cdot {\mathfrak t}, \  
\hbox{ avec ${\mathfrak a},\, {\mathfrak t} \in {\mathcal I}_k$,  
$\ \ \Big( \frac{H_k^{\rm pr}/k}{{\mathfrak a}} \Big) \in \Gamma_\infty\,$
 \&  $\, \Big( \frac{H_k^{\rm pr}/k}{{\mathfrak t}} \Big) \in {\mathcal T}_{k}$.}$ }

Pour $n \gg 0$, ${\mathfrak a}^{p^n}$ est 
$p$-principal de la forme $(\alpha)$, avec $\alpha \equiv 1 \pmod {p^{n'}}$
o\`u $n' \to \infty$ avec $n$~; pour la d\'etermination des 
$\delta_{\mathfrak p}(x)$ pour $x \in k^\times$ de la forme
$(x) = {\rm N}_{K/k}({\mathfrak A})$, on aura 
$(x) = (\alpha(x)) \cdot {\mathfrak t}(x)$ o\`u le nombre $\alpha(x)$ 
est ``n\'egligeable'' quant \`a ses $\delta_{\mathfrak p}(\alpha(x))$, 
et les $\delta_{\mathfrak p}(x)$ ne d\'ependront que de ${\mathfrak t}(x)$
modulo $\delta_{\mathfrak p}(E_k)$~;
on peut ainsi dire que les groupes 
${\rm N}_{K/k}({\mathcal I}_i^n)$ sont engendr\'es (modulo des $(\alpha)$
``quasi-infinit\'esimaux'') par des ${\mathfrak t} \in  {\mathcal T}_{k}$,
et que les groupes $\Lambda_i^n$ sont obtenus via les id\'eaux principaux $(x)$
qui s'en d\'eduisent.

\begin{hypothesis}\label{hypo}
{\it On suppose que les id\'eaux ${\mathfrak A}$ de $K=k_n$, obtenus
par l'algorithme, d\'efinissent une variable al\'eatoire ainsi que la composante 
${\mathfrak t}$ de ${\rm N}_{K/k}({\mathfrak A})$ et que  
$\big(\frac{F/k}{{\mathfrak t}} \big)$ parcourt uniform\'ement 
${\rm Gal}(F/k) \simeq {\mathcal T}_{k}$.}
\end{hypothesis}

Ceci a les cons\'equences essentielles suivantes~:

\smallskip
(i) La classe de ${\mathfrak t}$ parcourt uniform\'ement $\Cl_k$.

\smallskip
(ii) Lorsque ${\mathfrak t} = (x)$, puisque ${\rm Gal}(F/H_k) 
\simeq {\rm Gal}(H_k^{\rm pr}/k_\infty \,H_k) \simeq U_k^*/\overline E_k$ 
est d'exposant $p^e$, on a $x^{p^e} = x_\infty\! \cdot \varepsilon$, avec
$\varepsilon \in E_k \otimes \Z_p$ et $x_\infty$ infini\-t\'esimal 
(donc ${\rm N}_{k/\Q}(x)=1$ dans $U_\Q$), et l'image 
de $x$ est d\'efinie dans $U_k^*/\overline E_k$. 

\smallskip
Ainsi les familles $\big (\delta_{\mathfrak p}(x) \big)_{{\mathfrak p} \mid p}$
modulo les $\big (\delta_{\mathfrak p}(\varepsilon) \big)_{{\mathfrak p} \mid p}$ 
parcourent un domaine repr\'esentatif effectif fini, ne 
d\'ependant que de $E_k$, et soumis aux probabilit\'es habituelles sur les quotients 
de Fermat~; dans le cas quadratique, si $\delta_p(\varepsilon) = r \geq 1$, ce domaine 
est $[0, r[$ car si $\delta_p(x) \geq r$, $\delta_p(x \cdot E_k) = [r, \infty[$.

\smallskip
Si l'on revient aux $i$-suites des ${\rm N}_{K/k}({\mathcal I}_i^n)$, 
repr\'esentant ${\rm N}_{K/k}(M_i^n)$, et aux groupes $\Lambda_i^n = 
\{x \in k^\times,\  (x) \in {\rm N}_{K/k}({\mathcal I}_i^n)\}$,
ce qui pr\'ec\`ede justifie \`a nouveau les Heuristiques \ref{heur1}, 
\ref{heur2} et l'existence de $i_0 \geq {\rm max}(i_1, i_2)$, ind\'ependant de $n$, 
assurant la finitude du processus limite (D\'efinition \ref{process}) 
et conduisant \`a l'Heuristique probabiliste finale \ref{probas}.

\subsection{Conjecture de Greenberg faible}
Elle s'\'enonce (pour $k$ r\'eel, $p$-d\'ecompos\'e) sous la forme~:

\smallskip
\centerline {``{\it $\cl_{k_n}({S_{k_n}}) = 1$ pour tout $n$ $\ \Longrightarrow\ $ $\lambda = \mu = 0\,$}'' }

\smallskip
(voir \cite{J6} pour diff\'erentes conditions \'equivalentes sous la conjecture de Gross-Kuzmin
et \cite{Ng1}, \cite{Ng2} pour une preuve dans le cadre Ab\'elien sous certaines hypoth\`eses sur les 
unit\'es cyclotomiques). Or le crit\`ere de Greenberg  (Th\'eor\`eme \ref{green}) s'\'enonce~:

\smallskip
\centerline{\it  $\lambda = \mu = 0$ si et seulement si  $\Cl_{k_n}^{G_n} = 
\cl_{k_n}({S_{k_n}})$ pour tout $n \gg 0$.}
  
\smallskip
d'o\`u il r\'esulte l'implication conjecturale qui serait \`a d\'emontrer~:

\smallskip
\centerline{\it  $\cl_{k_n}({S_{k_n}}) = 1$ pour tout $n$ $\ \Longrightarrow\ $
$\Cl_{k_n}^{G_n} = 1$ pour tout $n \gg 0$.}

\smallskip
Puisque $\order \Cl_{k_n}^{G_n} = \order {\mathcal T}_k$ pour
$n \geq e$ (Th\'eor\`eme \ref{regulateur} (ii)), ceci devient~:

\smallskip
\centerline{\it  $\cl_{k_n}({S_{k_n}}) = 1$ pour tout $n$ $\ \Longrightarrow\ $
$k$ est $p$-rationnel (i.e., $\lambda=\mu=\nu=0$).}

\medskip
Si l'on suppose $k$ non $p$-rationnel 
(e.g., $p=3$, $k=\Q(\sqrt {103})$ o\`u $\Cl_k = 1$ et $R_k=3$),
alors n\'ecessairement $\cl_{k_{n}}(S_{k_{n}}) \ne 1$ pour tout $n \gg 0$.%
\footnote{Noter que les $\cl_{k_{n+h}}({S_{k_{n+h}}})$ se surjectent (par la norme)
sur $\cl_{k_n}(S_{k_n})$, ce qui fait que si l'un des groupes est nul \`a un \'etage
$n_0 \gg 0$, ils sont tous nuls pour $n \leq n_0$.}

\smallskip
D'apr\`es la Remarque \ref{cohomo} (ii), on a
${\rm H}^1({G_n}, \cl_{k_n}({S_{k_n}}))=0$ si et seulement si
$\Cl_{k_n}(S_{k_n}) =1$, donc  si et seulement si
$\Cl_{k_n}^{G_n} = \Cl_{k_n}^{S_{k_n}{\!G_n}}$
(autrement dit, dans ce cas, la condition suffisante du 
Th\'eor\`eme \ref{thm2} est aussi n\'ecessaire).

\smallskip
Dans le cadre de l'algorithme d\'eterminant, au niveau $n$, les $i$-suites 
$M_i^n$ et $\Lambda_i^n$, la condition  $\cl_{k_n}({S_{k_n}}) = 1$
ne modifie en rien les aspects probabilistes du \S\,\ref{resglob} pr\'ec\'edent, puisqu'on 
ne travaille qu'avec des id\'eaux \'etrangers \`a $p$ pour repr\'esenter les classes
de $k_n$, calculer leurs normes dans $\Cl_k$ et les quotients de Fermat des 
$x \in \Lambda_i^n$~; la conjecture faible ne semble pas \^etre de nature 
diff\'erente de celle de la conjecture g\'en\'erale.

\subsection{Heuristique finale}
Les Heuristiques \ref{heur1}, \ref{heur2}, et les arguments du 
\S\,\ref{resglob} se r\'esumeraient par l'existence de $i_0$ assez grand,
ind\'ependant de $n$, tel que pour tout  $n \gg 0$, $\Cl_{k_n}$ soit 
atteint au $i_0$-i\`eme pas  au plus (i.e., $m_n \leq i_0$)~; ceci est alors \'equivalent
\`a $\order \Cl_{k_n} = p^\nu$ pour tout $n \gg 0$. 
Pour $0 \leq i \leq i_0$, on a donc une suite de diviseurs successifs 
de $\order  {\mathcal T}_k$, de la forme~:

\medskip
\centerline{$\{t_0 = \order {\mathcal T}_k, \ldots, t_i, \ldots, (t_{m_n}= \cdots = t_{i_0}) =1\}$, }

\smallskip
d\'efinis, pour chaque $i \geq 0$ fix\'e, comme 
$\ds\max_{n \to \infty} \big(\order (M_{i+1}^n/M_i^n) \big)$ (cf. \S\,\ref{appgene}). 

\smallskip
On peut donc proposer l'heuristique probabiliste suivante (pr\'ecisant les fondements 
$p$-adiques de la conjecture de Greenberg), reposant sur les heuristiques ci-dessus et 
en notant que, grosso modo, l'existence de chacune des $m_n = O(1) \cdot
\big(\lambda \cdot n + \mu \cdot p^n + \nu \big)$ \'etapes de l'algorithme 
a une probabilit\'e fonction de celles des $\delta_{\mathfrak p}$ \`a avoir une valeur ``non triviale''
(un cas embl\'ematique simple \'etant celui des $\order (M_{i+1}^n/M_i^n)=p$ pour tout $i$
pour un corps quadratique~; dans le cas g\'en\'eral, la formule propos\'ee en (iii) est 
seulement un ordre de grandeur largement suffisant pour conclure)~:

\begin{conjecture} \label{probas}{\it
Soit $k$ un corps de nombres totalement r\'eel et soit $p>2$ totalement d\'ecompos\'e 
dans $k/\Q$. Soit ${\mathcal T}_k$ le groupe de torsion du groupe de Galois 
de la pro-$p$-extension Ab\'elienne $p$-ramifi\'ee maximale de $k$~; on suppose
${\mathcal T}_k\ne 1$.
On consid\`ere l'algorithme associ\'e \`a la filtration de $\Cl_{k_n}$, $n\geq 1$
fix\'e, et la $i$-suite des groupes $\Lambda_i^n$, $1 \leq i \leq m_n$, pour laquelle 
on a $m_n \geq  \frac{1}{v_p \big(\order  {\mathcal T}_k)} \, 
\big(\lambda \cdot n + \mu \cdot p^n + \nu \big)$ (Th\'eor\`eme \ref{O}). Alors~:

(i) La probabilit\'e que, pour un $x \in \Lambda_i^n$, on ait 
$\delta_{\mathfrak p}(x) \geq r$ (resp. $\delta_{\mathfrak p}(x) = 0$),
pour tout ${\mathfrak p} \mid p$, est  $ \frac{1}{p^{\,r \, (d-1)}}$
\Big(resp. $1 -  \frac{1}{p^{\, d-1}}$ \Big).

(ii) Soit $c \in \Cl_k$. La probabilit\'e que, pour un id\'eal ${\mathfrak A}$ de $k_n$ 
\'etranger \`a~$p$, la $p$-classe de ${\rm N}_{k_n/k}({\mathfrak A})$ soit \'egale \`a $c$,
est $ \frac{1}{\order \Cl_k}$.

(iii) Pour $\lambda$ ou $\mu$ non nuls, la probabilit\'e d'avoir
$\order \Cl_{k_n} = p^{\lambda \cdot n + \mu \cdot p^n+\nu }$
est au plus en $\frac{1}{p^{\,O(1)\,\cdot\, (\lambda \cdot n + \mu \cdot p^n)}}$,
lorsque $n \to \infty$.}
\end{conjecture}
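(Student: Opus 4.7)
Le plan consiste à déduire chacune des trois assertions de l'Hypothèse \ref{hypo}, en utilisant la description explicite des $\Lambda_i^n$ donnée au \S\,\ref{algo}, les calculs de symboles normiques de la Section \ref{hasse}, et le lien global avec ${\mathcal T}_k$ établi au \S\,\ref{resglob}.

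Pour (i), je partirais de la conclusion du \S\,\ref{resglob}~: modulo les contributions quasi-infinitésimales, tout $x \in \Lambda_i^n$ provient d'une composante ${\mathfrak t}$ uniformément répartie dans ${\mathcal T}_k$, de sorte que la famille $(\delta_{\mathfrak p}(x))_{{\mathfrak p}\mid p}$ se distribue uniformément sur $\prd_{{\mathfrak p}\mid p} \Z/p^n\Z$ sous la seule relation $\sm_{{\mathfrak p}\mid p} \alpha_{\mathfrak p}(x) \, p^{\delta_{\mathfrak p}(x)} \equiv 0 \pmod{p^n}$ (Théorème \ref{ordre} et Corollaire \ref{fp}). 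Cette unique contrainte linéaire réduit les degrés de liberté de $d$ à $d-1$, et un décompte direct des tuples vérifiant $\delta_{\mathfrak p}(x) \geq r$ en chaque place donne $\frac{1}{p^{\,r(d-1)}}$~; l'énoncé complémentaire s'en déduit par passage au complémentaire. Pour (ii), la surjection naturelle ${\rm Gal}(F/k) \simeq {\mathcal T}_k \tooo \Cl_k$ combinée à l'uniformité postulée de $\big(\frac{F/k}{{\mathfrak t}}\big)$ dans ${\rm Gal}(F/k)$ entraîne directement l'uniformité de la $p$-classe de ${\rm N}_{K/k}({\mathfrak A})$ sur $\Cl_k$, le facteur quasi-principal $(\alpha)$ étant trivial dans $\Cl_k$, d'où la probabilité $\frac{1}{\order \Cl_k}$.

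Pour (iii), je combinerais (i) et (ii) via la Formule \eqref{eq5} appliquée à chaque étape~: la non-trivialité de $\order (M_{i+1}^n/M_i^n)$ exige soit l'incomplétude de ${\rm N}_{k_n/k}(M_i^n)$ dans $\Cl_k$, soit un défaut normique contrôlé par les $\delta_{\mathfrak p}$. Par (i) et (ii), et en admettant l'indépendance asymptotique des événements consécutifs (justifiée heuristiquement par l'Hypothèse \ref{hypo}), chaque pas contribue par un facteur strictement inférieur à $1$ ne dépendant que de ${\mathcal T}_k$. Combiné avec la minoration $m_n \geq \frac{1}{v_p(\order {\mathcal T}_k)}(\lambda n + \mu p^n + \nu)$ du Théorème \ref{O}, la probabilité globale est majorée par produit par $\frac{1}{p^{O(1)\cdot (\lambda n + \mu p^n)}}$ lorsque $n \to \infty$.

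L'obstacle principal ne sera pas le comptage combinatoire, qui est essentiellement immédiat une fois la distribution postulée, mais bien la justification rigoureuse de l'Hypothèse \ref{hypo} et de l'indépendance asymptotique des événements successifs dans l'algorithme~: c'est là, comme souligné dans l'introduction, que résident les résultats $p$-adiques profonds, analogues à la conjecture de Leopoldt et vraisemblablement inaccessibles actuellement, qui seraient nécessaires à une démonstration complète.
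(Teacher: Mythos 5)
Votre démarche coïncide pour l'essentiel avec la justification que le papier donne lui-même de cette Heuristique \ref{probas}, qui n'est pas un théorème démontré mais une conclusion heuristique~: (i) repose sur la formule du produit réduisant les degrés de liberté de $d$ à $d-1$ (cf. Remarque \ref{note} pour le cas quadratique), (ii) sur l'uniformité de la composante ${\mathfrak t}$ dans ${\mathcal T}_k$ postulée par l'Hypothèse \ref{hypo} au \S\,\ref{resglob}, et (iii) sur la combinaison avec la minoration de $m_n$ du Théorème \ref{O} et l'indépendance supposée des étapes (Remarque \ref{mn}). Vous identifiez correctement que le seul véritable point non démontrable est l'Hypothèse \ref{hypo} et l'indépendance asymptotique, ce qui est exactement la position du papier.
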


\subsection{Perspectives}
Cette heuristique (m\^eme imparfaite) montre la l\'egitimit\'e de la conjecture de 
Greenberg mais aussi que la $n$-suite des $\order  \Cl_{k_n}$ est 
probablement tr\`es rapidement stationnaire, ce qui est plut\^ot un \'el\'ement favorable 
pour une approche plus analytique. 
Il semble en effet difficile de trouver des arguments th\'eoriques qui ``obligeraient'' les 
classes des normes ${\rm N}_{k_n/k}({\mathfrak A})$ d'id\'eaux al\'eatoires ${\mathfrak A}$ 
de $k_n$ \`a ne pas se r\'epartir uniform\'ement dans $\Cl_k$, ou les 
$\delta_{\mathfrak p}(x)$ des quotients de Fermat 
des $x \in k^\times$ \`a ne pas suivre des lois binomiales reposant sur
la probabilit\'e de nullit\'e en $1-\frac{1}{p}$ (voire encore plus proches de~$1$ 
comme nous l'avons longuement analys\'e dans \cite{Gra5} et 
\cite{Gra6} pour les nombres alg\'ebriques en g\'en\'eral). Voir \`a ce sujet \cite{HB}
qui \'etablit d\'ej\`a la r\'epartition uniforme des quotients de Fermat des entiers 
rationnels et qui est probablement g\'en\'erale.

\smallskip
Cette question risque de rester sans r\'eponse, d'autant plus qu'un \'eminent 
math\'ematicien fran\c cais m'avait confort\'e en affirmant, dans un \'echange au sujet des 
quotients de Fermat des rationnels (Janvier 2012)~: 

\smallskip
{\it ``Or vous savez bien que c'est la pire des situations~: quand quelque chose 
est al\'eatoire (par exemple un quotient de Fermat) on est souvent compl\`etement 
d\'esarm\'e pour le d\'emontrer''}. 

\smallskip
Ceci explique la difficult\'e rencontr\'ee en ``th\'eorie d'Iwasawa alg\'ebrique''
concernant le ``calcul'' des invariants $\lambda, \mu, \nu$ de la limite projective des $\Cl_{k_n}$ 
pour le cas totalement r\'eel en l'absence d'une conjecture tr\`es
ambitieuse qui gouvernerait beaucoup de probl\`emes arithm\'etiques 
analogues o\`u interviennent de tels objets $p$-adiques  ``al\'eatoires'', d'autant plus que 
tout ce qui pr\'ec\`ede repose sur la conjecture de Leopoldt~;
or celle-ci revient \`a dire grosso modo que la probabilit\'e, pour le r\'egulateur $R_k$,
d'\^etre divisible par $p^n$ est en $\frac{1}{p^{f(n)}}$, $f(n) \to \infty$ si $n \to \infty$,
c'est-\`a-dire analogue \`a celle de l'Heuristique \ref{probas} (iii), et que $R_k=0$ 
est ``presque s\^urement'' impossible.

\smallskip
On peut donc se demander si une aproche de type transcendance $p$-adique
ne serait pas mieux adapt\'ee puisque c'est l'une des voies d\'emontrant quelques 
cas non triviaux de la conjecture de Leopoldt.

\smallskip
Tout ceci est aussi li\'e \`a un point de vue diff\'erent
qui est celui de fixer le corps $k$ et de faire tendre $p$ vers l'infini, auquel cas
un cadre conjectural $p$-adique analogue \cite[Section 7]{Gra6} conduirait \`a la 
$p$-rationalit\'e de $k$ pour tout premier $p \gg 0$, donc \`a la conjecture de Greenberg 
pour laquelle on aurait $\lambda_p = \mu_p = \nu_p =0$ 
ind\'ependamment de toute technique d'Iwasawa.

\section{Statistiques sur les $\delta_{\mathfrak p}(x)$ 
et les classes des ${\rm N}_{k_n/k}({\mathfrak A})$} \label{BC}\label{sect8}
Le point essentiel est le comportement, pour $n$ fix\'e, des 
``quotients de Fermat'' ${\mathfrak p}^{\delta_{\mathfrak p}(x_i)}$ (D\'efinition \ref{delta}), 
pour les $x_i \in \Lambda_i^n$ tels que $(x_i) \in {\rm N}_{k_n/k}({\mathcal I}^n_i)$
ainsi que les classes des id\'eaux ${\rm N}_{k_n/k}({\mathfrak A}_i)$
o\`u les ${\mathfrak A}_i \in {\mathcal I}^n_i$ repr\'esentent 
les classe de $M^n_i$.

\smallskip
L'\'etape de l'algorithme calculant le $(i+1)$-i\`eme sous-groupe $M^n_{i+1}$ 
de la filtration repose sur les $x_i = {\rm N}_{k_n/k}(y_i) \in \Lambda_i^n \cap 
{\rm N}_{k_n/k}(k_n^\times)$, tels que $(x_i) =  {\rm N}_{k_n/k}( {\mathfrak B}_i)$, 
${\mathfrak B}_i \in {\mathcal I}_i^n$, ce qui conduit \`a 
${\mathfrak A}_{i+1}^{1-\sigma_n} \cdot{\mathfrak B}_{i} = (y_i)$,
${\mathfrak A}_{i+1}$ \'etranger \`a $p$ dont on prend la norme, etc.

\smallskip
On a alors $\omega_n(\Lambda_{m_n}^n) = \Omega(k_n/k)$ et 
${\rm N}_{k_n/k}(M_{m_n}^n) = \Cl_k$, pour le nombre de pas ${m_n}$
de l'algorithme.

\smallskip 
D'apr\`es le Th\'eor\`eme \ref{O}, en supposant par exemple
${\mathcal T}_k\ne 1$ et $\Cl_k = 1$, 
l'hypoth\`ese $\lambda$ ou $\mu \geq 1$ doit conduire \`a au moins
$\frac{1}{v_p(\order  {\mathcal T}_k)} \, (\lambda \cdot n + \mu \cdot p^n +  \nu)$
valeurs cons\'ecutives de l'indice $i$ pour lesquelles tous les 
$x_i$ sont tels que $\delta_{\mathfrak p}(x_i) \geq 1$ pour certains ${\mathfrak p} \mid p$
de telle sorte que $\omega_n(\Lambda_i^n) \subset \Omega(k_n/k)$~; 
ceci rend indispensable la confrontation avec l'exp\'erimentation num\'erique.

\smallskip
Pour cela on commence par un cadre simple, mais non trivial, utilisant des 
id\'eaux ${\mathfrak A}_i$ premiers, dans le cas $\Cl_k=1$.
Ensuite, on aura \`a examiner le facteur ``classes'' et le facteur ``normique''
dans l'expression de $M_{i+1}^n/M_i^n$.

\subsection{Repr\'esentation des classes par des id\'eaux 
premiers ${\mathfrak L}_i$}\label{L}

D'apr\`es le th\'eor\`eme de Tchebotarev, pour
tout $c \in \Cl_{k_n}$ il existe une infinit\'e de
nombres premiers $\ell$ tels que, pour un id\'eal premier convenable 
${\mathfrak L}' \mid \ell$ dans $H_{k_n}$ (le $p$-corps de classes 
de Hilbert de $k_n$), $\Big(\frac{H_{k_n}/\Q}{{\mathfrak L}'} \Big)$ 
soit l'\'el\'ement de ${\rm Gal}(H_{k_n}/k_n)$ correspondant \`a $c$ 
par le corps de classes~; comme $H_{k_n}/k_n$ est Ab\'elienne, ce 
Frobenius ne d\'epend que de l'id\'eal premier ${\mathfrak L}$ de $k_n$ au-dessous de
${\mathfrak L}'$. L'image de $\Big(\frac{H_{k_n}/k_n}{{\mathfrak L}} \Big)$ 
dans ${\rm Gal}(H_{k_n}/k_n)$ est encore $c$ et est repr\'esent\'ee par 
${\mathfrak L}$ totalement d\'ecompos\'e dans $k_n/\Q$.

\smallskip
Autrement dit, lorsque par exemple $k$ est principal,
en se limitant \`a des id\'eaux premiers ${\mathfrak L}$ totalement 
d\'ecompos\'es dans $k_n/\Q$ et en consid\'erant ${\mathfrak l} = (\alpha)$
pour l'id\'eal premier ${\mathfrak l} = {\rm N}_{k_n/k}({\mathfrak L})$ de $k$ 
en-dessous de ${\mathfrak L}$, on peut effectuer des statistiques sur les valeurs prises 
par les $\delta_{\mathfrak p} (\alpha)$ {\it ind\'ependamment de tout contexte conjecture de 
Greenberg} puisque les ${\mathfrak L}_i$ de l'algorithme seront al\'eatoirement certains 
${\mathfrak L}$ particuliers.

\smallskip
On reprend ici les m\^eme hypoth\`ese simplificatrices faites au \S\,\ref{idx},
o\`u le corps $k = \Q(\sqrt m)$ est principal et les entiers $\delta_p(\varepsilon)$ 
et $\delta_{\mathfrak p}(\pi)$ non nuls, de sorte que la condition 
suffisante du Th\'eor\`eme \ref{thm1} ne s'applique pas (en r\'ealit\'e, les r\'esultats 
n'en d\'ependent pas.

\smallskip
Le nombre $\alpha$ tel que ${\rm N}_{k_n/k}({\mathfrak L}) = {\mathfrak l} = (\alpha)$
est un entier de $k$ (unique \`a une unit\'e pr\`es) de norme 
$\ell = {\mathfrak l} \cap \Z$ sur $\Q$, o\`u $\ell^{p-1} \equiv 1\!\! \pmod {p^{n+1}}$.
On a $\alpha^{p-1} = 1 + p\cdot p^{\delta_p(\alpha)} \cdot \beta$, 
$\beta$ \'etranger \`a $p$ si $\delta_p(\alpha) < n$.

\smallskip
Le choix par PARI de $\alpha$ modulo $E_k$ influe de fa\c con marginale sur
les statistiques (voir ci-apr\`es un cas sans \'equivoque avec $\delta_3(\varepsilon)=6$).
Ceci vaut pour tous les r\'esultats de cette section o\`u le ``choix'' de $\alpha$
intervient.

\smallskip
(i) Dans l'exp\'erimentation num\'erique, pour $p=3$, on prend $n$ assez grand
de sorte que l'on compte dans $C_r$ les nombres premiers
$\ell \equiv \pm1 \pmod {3^{n+1}}$ pour lesquels
$\delta_p(\alpha) = r$, o\`u $r$ varie de $0$ \`a $n-1$ au plus, et on 
compare le r\'esultat aux probabilit\'es naturelles
$\frac{2}{3}, \,  \frac{2}{3^2}, \, \cdots \, , \, \frac{2}{3^r}, \, \cdots$.
\footnotesize
\begin{verbatim}
{p=3; m=103; n=12; B=10^13; M=p^(n+1); Q=x^2-m;
K=bnfinit(Q,1); C0=0; C1=0; C2=0; C3=0; C4=0; C5=0; NL=0;
for(t=-1, 0, L=2*t+1; while(L<B, L=L+2*M;
if(isprime(L)==1 & kronecker(m,L)==1, NL=NL+1;
Su=bnfsunit(K, idealprimedec(K,L));
A=component(component(Su,1),1);
AA=(Mod(A,Q)^2-1)/3; v=valuation(AA,3);
if(v==0, C0=C0+1); if(v==1, C1=C1+1); if(v==2, C2=C2+1);
if(v==3, C3=C3+1); if(v==4, C4=C4+1); if(v>=5, C5=C5+1))));
print(p,"   ",m,"   ",n,"   ",B);
print(NL,"  ",C0," ",C1," ",C2," ",C3," ",C4," ",C5);
print(C0/NL+0.0,"  ",C1/NL+0.0,"  ",C2/NL+0.0,"  ",C3/NL+0.0,"  ",
C4/NL+0.0,"  ",C5/NL+0.0); print("  ");
S=0.0; for(j=1,8, S=S+(p-1.0)/p^(5+j));
print(2./3,"  ",2./9,"  ",2./27,"  ",2./81,"  ",2./243,"  ", S)}
\end{verbatim}

\normalsize
On obtient la remarquable confirmation du fait que les 
$\delta_p(\alpha)$ se r\'epartissent de fa\c con ind\'ependante de la condition 
$\ell \equiv \pm1 \pmod {3^{n+1}}$, quel que soit~$n$. 
On consid\`ere l'exemple ci-dessous pour lequel $\delta_p(\varepsilon)=
\delta_{\mathfrak p}(\pi)=1$~:

\smallskip
\centerline{$p=3$, $\ m=103$, $\ n=12$, $\ \ell \equiv \pm 1 \pmod {3^{13}}$,
$\ 1 <\ell <10^{13}$.}

\smallskip
Il y a alors $N_L = 325644 $ nombres premiers $\ell$ dans l'intervalle, dont respectivement 
$C_0= 217122 $, $C_1= 72353 $, $C_2= 24174 $, $C_3= 8043 $, $C_4= 2620 $, $C_5= 1332 $, 
sont tels que $\delta_p(\alpha)= 0, 1, 2, 3, 4, \geq 5$.
\footnotesize
$$\begin{array}{cccc}
& \hbox{proportions} &   & \hbox{probabilit\'es} \vspace{0.1cm}\\ 
& C_0= 0.6667465084 
 &\hspace{1.0cm}   \frac{2}{3} & \hspace{-0.3cm} = 0.6666666666 \\
& C_1= 0.2221843485
&\hspace{1.0cm} \frac{2}{3^2} & \hspace{-0.3cm}= 0.2222222222 \\
& C_2= 0.0742344400  
&\hspace{1.0cm} \frac{2}{3^3} & \hspace{-0.3cm}= 0.0740740740 \\
& C_3= 0.0246987507  
&\hspace{1.0cm} \frac{2}{3^4} & \hspace{-0.3cm}= 0.0246913580 \\
& C_4= 0.0080455958
&\hspace{1.0cm}  \frac{2}{3^5} & \hspace{-0.3cm}= 0.0082304526 \\
& C_5= 0.0040903563  
&  \sum_{j \geq 6}\frac{2}{3^j} &\hspace{-0.3cm} = 0.0041152264
\end{array} $$

\normalsize
Le m\^eme calcul pour $m = 2149$, o\`u $\delta_p(\varepsilon) =
\delta_{\mathfrak p}(\pi)= 3$, conduit \`a 
des r\'esultats similaires~: $N_L=325538$, $C_0=216955$, $C_1=72406$, $C_2=24145$, 
$C_3=8063$, $C_4=2667$, $C_5=1302$ (premi\`ere proportion $0.6664506140$).

\smallskip
L'exemple suivant pour lequel $\delta_3(\varepsilon) = 6$ est donc tel que
$\delta_3(\alpha)$ est ind\'ependant du repr\'esentant modulo $E_k$ donn\'e
par PARI, au moins pour $r \leq 5$ ($m=1213$, $n=12$, $B=10^{13}$)~:
$N_L= 325778$, $C_0= 217298$, $C_1= 72322$, $C_2= 24144$, $C_3= 8075$, 
$C_4= 2621$, $C_5= 1318$, avec les proportions 
$0.6670125054$, $0.2219978021$, $0.0741118184$, $0.0247868180$,
$0.0080453560$, $0.0040456998$. 

\smallskip
En prenant $m= 397$, $n=100$, $B=10^{54}$, et $\ell \equiv \pm 1 \pmod {3^{101}}$,
on obtient les valeurs tr\`es stables~:
$N_L= 7947 $, $C_0= 5305 $, $C_1= 1702 $, $C_2= 646 $, $C_3= 207 $, 
$C_4= 55 $, $C_5= 32$, et la proportion $0.6675475022$ 
de $\delta_p(\alpha)=0$.

\medskip
(ii) Dans le cas g\'en\'eral, on suppose toujours $p$ d\'ecompos\'e dans $k$ 
et $\Cl_k=1$. Le programme est un peu plus complexe car on souhaite obtenir tous les
$\ell$ v\'erifiant la condition $\ell^{p-1} \equiv 1 \pmod{p^{n+1}}$ (on utilise les puissances 
$\rho^k$, $1 \leq k \leq p-1$, d'une racine primitive ($p-1$)-i\`eme de l'unit\'e $\rho \pmod {p^{n+1}}$).
\footnotesize
\begin{verbatim}
{p=7; m= 44853; n=5; B=10^12; M=p^(n+1); Q=x^2-m; K= bnfinit(Q,1); 
ro=znprimroot(M)^(p^n); C0=0; C1=0; C2=0; C3=0; C4=0; C5=0; NL=0; 
for(k=1, p-1, R=Mod(ro,M)^k; L=component(R,2); 
while(L<B, L=L+M; if(isprime(L)==1 & kronecker(m,L)==1, NL=NL+1; 
Su=bnfsunit(K, idealprimedec(K,L)); A=component(component(Su,1),1);
AA =(Mod(A,Q)^(p-1)-1)/p; v=valuation(AA,p); 
if(v==0, C0=C0+1); if(v==1, C1=C1+1); if(v==2, C2=C2+1); 
if(v==3, C3=C3+1); if(v==4, C4=C4+1); if(v>=5, C5=C5+1)))); 
print(p,"   ",m,"   ",n,"   ",B); 
print(NL,"  " ,C0," ",C1," ",C2," ",C3," ",C4," ",C5); 
print(C0/NL+0.0,"  ",C1/NL+0.0,"  ",C2/NL+0.0,"  ",C3/NL+0.0,"  ", 
C4/NL+0.0,"  ",C5/NL+0.0); S=0.0; for(j=1,10, S=S+(p-1.0)/p^(5+j));
print((p-1.0)/p,"  ",(p-1.0)/p^2,"  ",(p-1.0)/p^3,"  ",
                             (p-1.0)/p^4,"  ",(p-1.0)/p^5,"  ", S)}
\end{verbatim}

\normalsize
On consid\`ere l'exemple pour lequel $\delta_p(\varepsilon)=
\delta_{\mathfrak p}(\pi) =1$~:
$p=7$, $m= 44853 $, $n=5$, $\ell^{p-1} \equiv 1 \pmod {7^{6}}$,
$1 <\ell <10^{12}$. On a alors
$N_L =  1118955$, $C_0= 959051$, $C_1= 137487$, $C_2= 19118$, 
$C_3= 2842$, $C_4= 388$, $C_5= 69$.
\footnotesize
$$\begin{array}{cccc}
& \hbox{proportions} & & \hbox{probabilit\'es} \vspace{0.1cm}\\ 
& C_0=   0.8570952361
 &\hspace{1cm} \frac{6}{7} &  \hspace{-0.3cm}= 0.8571428571  \\
& C_1=  0.1228708929
&\hspace{1cm} \frac{6}{7^2} & \hspace{-0.3cm} =  0.1224489795 \\
& C_2=    0.0170855843
&\hspace{1cm} \frac{6}{7^3} & \hspace{-0.3cm} = 0.0174927113  \\
& C_3=     0.0025398697
&\hspace{1cm}  \frac{6}{7^4} &\hspace{-0.3cm} = 0.0024989587  \\
& C_4=  0.0003467521
&\hspace{1cm}  \frac{6}{7^5} & \hspace{-0.3cm}=  0.0003569941 \\
& C_5=    0.0000616646
& \sum_{j \geq 6}\frac{6}{7^j} &\hspace{-0.3cm} =  0.0000059499
\end{array} $$

 \normalsize
On consid\`ere enfin le cas suivant pour lequel $\delta_p(\varepsilon)=
\delta_{\mathfrak p}(\pi)=1$~: 
$p=29$, $m= 683 $, $n=5$, $\ell^{p-1} 
\equiv 1 \pmod {29^{6}}$, $1 <\ell <10^{15}$.

\smallskip
$N_L =  728880$,
$C_0= 703535 $, $C_1= 24450 $, $C_2= 857 $, $C_3= 38 $, $C_4=C_5= 0$.
\footnotesize
$$\begin{array}{rlcc}
& \hbox{\ \ \ \ \  proportions} & & \hbox{probabilit\'es} \vspace{0.1cm}\\ 
& C_0=  0.9652274722
 &\hspace{1cm} \frac{28}{29} &\hspace{-0.3cm} = 0.9655172413 \\
& C_1= 0.0335446163
&\hspace{1cm}  \frac{28}{29^2} & \hspace{-0.3cm}= 0.0332936979 \\
& C_2=   0.0011757765
&\hspace{1cm}  \frac{28}{29^3} &\hspace{-0.3cm} =0.0011480585  \\
& C_3=   0.0000521347
&\hspace{1cm}  \frac{28}{29^4} &\hspace{-0.3cm} = 0.0000395882 \\
& C_4= 0
&\hspace{1cm} \frac{28}{29^5} &\hspace{-0.3cm} = 0.0000013651 \\
& C_5=   0
&  \sum_{j \geq 6}\frac{28}{29^j} &\hspace{-0.3cm} = 0.0000000488
\end{array} $$

\normalsize
(iii) Les groupes 
$\Lambda_i^n = \{x \in k^\times, \,  (x) \in {\rm N}_{k_n/k}({\mathcal I}^n_i) \}$
des exemples pr\'ec\'edents sont de la forme $\Lambda_i^n = 
\langle \varepsilon, \alpha_1, \ldots, \alpha_i\rangle$ pour $1 \leq i \leq m_n$ 
(\S\,\ref{Lambda}), et la probabilit\'e d'avoir $m_n$ g\'en\'erateurs
(outre $\varepsilon$) suit une loi binomiale {\it ind\'ependamment de la valeur 
de $n \to \infty$}, ce qui sugg\`ere une  probabilit\'e nulle d'avoir
 $\lambda$ ou $\mu \geq 1$, c'est-\`a-dire $m_n=O(\lambda  \cdot n + \mu \cdot p^n)$ 
 (Th\'eor\`eme \ref{O}).

\medskip
(iv) Exemple avec le sous-corps cubique $K$ de $\Q(\mu_7)$ d\'efini par le polyn\^ome
$Q=x^3 + x^2 - 2\,x - 1$, $p=13$,  $n=3$, $\ell^{p-1} \equiv 1 \pmod{p^4}$, $1 < \ell < 10^{12}$.
Pour avoir les $\ell$ d\'ecompos\'es dans $k$, le programme factorise
$Q$ modulo $\ell $ et teste le nombre de facteurs $d$ 
(principe qui permet de varier $Q$ facilement).
\footnotesize
\begin{verbatim}
{p=13; n=3; B=10^12; M=p^(n+1); Q=x^3+x^2-2*x-1; K= bnfinit(Q,1);
ro=znprimroot(M)^(p^n); C0=0; C1=0; C2=0; C3=0; NL=0;
for(k=1, p-1, R=Mod(ro,M)^k; L=component(R,2); while(L<B, L=L+M;
if(isprime(L)==1, QL=x^3+x^2-2*x-Mod(1,L); F=factor(QL);
d=component(matsize(F),1); if(d==3, NL=NL+1;
Su=bnfsunit(K, idealprimedec(K,L)); A=component(component(Su,1),1);
AA =(Mod(A,Q)^(p-1)-1)/p; v=valuation(AA,p);
if(v==0, C0=C0+1); if(v==1, C1=C1+1); if(v==2, C2=C2+1);
if(v==3, C3=C3+1))))); print(p," ",n," ",B);
print(NL," " ,C0," ",C1," ",C2," ",C3);
print(C0/NL+0.0," ",C1/NL+0.0," ",C2/NL+0.0," ",C3/NL+0.0);
print((p^2-1.0)/p^2," ",(p^2-1.0)/p^4," ",
                                 (p^2-1.0)/p^6," ",(p^2-1.0)/p^8)}
\end{verbatim}

\normalsize
Il y a alors $N_L = 5707184$ nombres $\ell$ dans l'intervalle, dont 
$C_0= 5673504 $, $C_1= 33487 $, $C_2= 192 $, $C_3=1$,
sont tels que $\delta_p(\alpha)= 0, 1, 2, 3$.
\footnotesize
$$\begin{array}{llcc}
& \hbox{proportions} &  &\hbox{probabilit\'es}  \vspace{0.1cm}\\ 
 C_0= &  \hspace{-0.2cm} 0.9940986658
 &  \frac{p^2-1}{p^2}  = & \hspace{-0.2cm}  0.9940828402  \\
 C_1=  & \hspace{-0.2cm} 0.0058675171
&  \frac{p^2-1}{p^4}  =& \hspace{-0.2cm}  0.0058821469  \\
 C_2=  & \hspace{-0.2cm} 0.0000336418
&  \frac{p^2-1}{p^6}  =& \hspace{-0.2cm} 0.0000348056  \\
 C_3=  &  \hspace{-0.2cm} 0.0000001752
&  \frac{p^2-1}{p^8}  = & \hspace{-0.2cm}  0.0000002059 \\
\end{array} $$
\normalsize

\subsection{Statistiques sur $m_n$}\label{stat}
Rappelons, pour l'algorithme, que si $x_i = {\rm N}_{k_n/k}(y_i) \in 
\Lambda_i^n \cap {\rm N}_{k_n/k}(k_n^\times)$, 
o\`u $(x_i) =  {\rm N}_{k_n/k}( {\mathfrak B}_i) \in  {\rm N}_{k_n/k}({\mathcal I}_i^n)$, 
la relation ${\mathfrak A}_{i+1}^{1-\sigma_n} \cdot{\mathfrak B}_{i} = (y_i)$ fournit le nouvel
${\mathfrak A}_{i+1}$ dont on calcule ${\rm N}_{k_n/k}({\mathfrak A}_{i+1})$
pour constituer $\Lambda_{i+1}^n$, puis les $\delta_p(x_{i+1})$, etc.
Consid\'erons alors un corps quadratique $k$ et le $n$-i\`eme \'etage de $k_\infty$ pour lequel 
on souhaite tester l'ind\'ependance des $\delta_p(x_i) \geq 0$, $x_i \in \Lambda_i^n$,
obtenus successivement par l'algorithme de calcul de $\Cl_{k_n}$ pour $1 \leq i \leq m_n$ 
et de m\^eme celle des classes des ${\rm N}_{k_n/k}({\mathfrak A}_{i})$ dans $\Cl_k$.

\smallskip
Comme la recherche num\'erique, par programme PARI, des id\'eaux ${\mathfrak A}_i$ 
et des $x_i \in \Lambda_i^n$ tels que $(x_i) =  {\rm N}_{k_n/k}({\mathfrak B}_i)$, 
est particuli\`erement difficile on proc\`ede de fa\c con indirecte par le biais du calcul de
$m_n$ ($n=1$ en pratique) \`a partir de celui de $\order \Cl_{k_n}$.

\smallskip
Pour \'etudier les deux facteurs de $\order (M^n_{i+1}/M^n_i)$ on consid\`ere s\'epar\'ement
les cas $\Cl_k=1$ et $\delta_p(\varepsilon) = 0$ qui permettent des statistiques
respectivement sur~:

\centerline{$\ds \frac{p^{n \cdot (d -1)}}{(\Lambda_i^n : \Lambda_i^n \cap {\rm N}_{k_n/k}(k_n^\times))}$
\ \ et \ \  $\ds \frac{\order \Cl_k}{\order {\rm N}_{k_n/k}(M^n_i)}$.}

\subsubsection{Calcul de $m_n$ dans le cas 
$\Cl_k=1\ \&\ \delta_p(\varepsilon) = 1$, $k$ quadratique} 
On se place dans les conditions pour lesquelles le nombre de classes ambiges 
dans $k_n/k$ est \'egal \`a $p$, donc avec la seule hypoth\`ese~:
$$\ds {(E_k : E_k \cap {\rm N}_{k_n/k} (k_n^\times))} = p^{n-1}, $$ 

qui \'equivaut au fait que $\delta_p(\varepsilon) = 1$.
Il en r\'esulte qu'il est \'equivalent de faire des statistiques sur l'ordre de $\Cl_{k_n}$ 
lorsque $k=\Q( \sqrt m)$ varie, ce que PARI effectue assez rapidement~; 
en effet, on a la filtration correspondante $(M^n_i)_{i \geq 0}$, de $M^n = \Cl_{k_n}$, 
telle que $\order  (M^n_{i+1}/M^n_i) = p$ pour $0 \leq i \leq m_n-1$, et telle que
$\order  \Cl_{k_n} = \prod_{i=0}^{m_n-1}\order  (M^n_{i+1}/M^n_i) = p^{m_n}$.

\smallskip
On a $M^n=M_1^n$ si et seulement si $M_2^n=M_1^n$, ce qui est \'equivalent \`a
$\delta_p(\alpha_1) = 0$, de probabilit\'e $1 - \frac{1}{p}$ et qui ne d\'epend pas du repr\'esentant 
$\alpha_1$ modulo $\langle \varepsilon \rangle$.

\smallskip
Ensuite, on a $M^n=M_2^n$ si et seulement si $M_2^n \ne M_1^n$ et $M_3^n=M_2^n$, 
\'equivalent \`a $\delta_p(\alpha_1) \geq 1$ et $\delta_p(\alpha_2) = 0$, 
de probabilit\'e $(1 - \frac{1}{p}) \cdot \frac{1}{p}$, etc.

\begin{remark}\label{note}
 Noter que puisque ${\rm N}_{k/\Q}(\varepsilon)=1$, la condition $R_k \equiv 0 \pmod p$ est 
\'equivalente \`a $\delta_p(\varepsilon) \geq 1$ et est donc de probabilit\'e $\frac{1}{p}$, 
alors que pour $x \in k^\times$ arbitraire, la condition $\delta_{\mathfrak p}(x) \geq 1$ 
pour tout ${\mathfrak p} \mid p$ est de probabilit\'e $\frac{1}{p^2}$ 
(cf. Lemme \ref{norme1} et Remarque \ref{remunit}).

\smallskip
Plus g\'en\'eralement, si $\alpha = 1+p \cdot p^r \cdot \beta$, $0 \leq r < n$, 
est tel que ${\rm N}_{k/\Q}(\alpha) \equiv 1 \pmod {p^{n+1}}$,
ce qui est le cas si $(\alpha)$ est norme d'un id\'eal de $k_n$,
on a la relation exceptionnelle $\beta + \beta' + p^{r+1} \cdot \beta\,\beta' = u \cdot p^{n-r}$
qui refl\`ete la formule du produit et modifie la probabilit\'e de $p$-divisibilit\'e 
de $\beta$ en $\frac{1}{p}$ au lieu de $\frac{1}{p^2}$ puisque $p \mid \beta$  
\'equivaut \`a $p \mid \beta'$, d'o\`u les probabilit\'es pour les $\delta_p(\alpha_i)$. 

\smallskip
Si $\alpha$ est une unit\'e $\varepsilon$, on a $\beta' +\beta\cdot \varepsilon' = 0$ 
qui rend la propri\'et\'e ci-dessus vraie pour tout $n$.

\smallskip
Les deux programmes suivants ($k=\Q(\sqrt 7)$, $p=3$) justifient \`a nouveau le
ph\'e\-nom\`ene, pour $y \in k^\times$, selon la proximit\'e $p$-adique
de ${\rm N}_{k/\Q}(y)$ avec $1$~:

\medskip
(i) On impose que la norme de l'\'el\'ement al\'eatoire $y$ soit assez proche de 1
(on peut remplacer $9$ par toute puissance de $3$ plus grande)~:
\footnotesize
\begin{verbatim}
{m=7; Q=x^2-m; X=Mod(x,Q); N=10^6; NY=0.0; NY0=0.0; B=10^7; 
for(k=1, B, a=random(N); b=random(N); Y=a*X+b; n=norm(Y); 
if(Mod(n^2, 9)==1, NY=NY+1; Z=(Y^2-1)/3; z=norm(Z); 
if(valuation(z,3)==0, NY0=NY0+1))); print(NY0/NY)}
\end{verbatim}

\normalsize
Densit\'e de $\delta_p(y)=0$ obtenue~: $0.6667709837 \sim \frac{2}{3}$.

\medskip
(ii) Aucune condition de norme locale en dehors de $p$ pour $y$~:
\footnotesize
\begin{verbatim}
{m=7; Q=x^2-m; X=Mod(x, Q); pi1=X-2; pi2=X+2; N=10^5; NY=0.0; NY0=0.0; 
B=10^7; for(k=1, B, a=random(N); b=random(N); Y=a*X+b; n=norm(Y); 
if(Mod(n, 3)!=0, NY=NY+1; Z=(Y^2-1)/3; Z1=Z*pi1; Z2=Z*pi2;  
v1=valuation(component(Z1,2), 3); v2=valuation(component(Z2,2), 3); 
v=min(v1, v2); if(v==0, NY0=NY0+1))); print(NY0/NY)}
\end{verbatim}

\normalsize
Densit\'e de ${\rm min}(\delta_{\mathfrak p}(y), \delta_{\mathfrak p'}(y)) = 0$ 
obtenue~: $0.8889293692 \sim \frac{8}{9}$.
\end{remark}

Dans le cas $n=1$, il suffit de supposer $R_k = p$, donc que $k$ 
est tel que $\delta_p(\varepsilon) \geq 1$.
Autrement dit, $\varepsilon$ est partout norme locale et n'intervient pas dans
les raisonnements.
L'ind\'ependance des $\alpha_i$  est alors mesur\'ee par les probabilit\'es
$\frac{p-1}{p^h}$ d'avoir $\order  \Cl_{k_1} = p^h$, $h \geq 1$ (m\^eme 
raisonnement que dans le cas pr\'ec\'edent). 

\smallskip
Le programme ci-dessous (\'ecrit pour $n=1$) est valable pour $p \geq 3$, 
mais pour $p>3$, le temps de calcul devient prohibitif~; pour $n>1$ remplacer 
$polsubcyclo(p^2, p)$ par $polsubcyclo(p^{n+1}, p^n)$.
\footnotesize
\begin{verbatim}
{p=3; Cyclo=polsubcyclo(p^2, p); C1=0; C2=0; C3=0; C4=0; CM=0; 
b=1; B=3*10^5;  m=b; while(m<b+B, m=m+1;  
if(core(m)==m & kronecker(m, p)==1, Q=x^2-m; M=m; t=Mod(m,4); 
if(t!=1, M=4*m); h=qfbclassno(M); if(valuation(h,p)==0, E=quadunit(M);  
e1=component(E,2); e2=component(E,3); if(t==1, e2=e2/2; e1=e1+e2); 
E=Mod(e1+e2*x, Q); EE=component(E^(p-1)-1,2); 
ve=valuation(EE, p)-1; if(ve>=1, CM=CM+1;  
P=polcompositum(Cyclo, Q); R=component(P,1); K=bnfinit(R, 1); 
H=bnrinit(K,1); G=component(component(H,5),1); w=valuation(G,p); 
if(w==1, C1=C1+1);  if(w==2, C2=C2+1); 
if(w==3, C3=C3+1); if(w>=4, C4=C4+1))))); 
print(CM,"  ",C1,"  ",C2,"  ",C3,"  ",C4); 
print(C1/CM+0.0, "   " ,C2/CM+0.0, "   " , 
C3/CM+0.0,"   ",C4/CM+0.0); S=0.0; for(j=0, 8, S=S+(p^2-1)/p^(8+2*j)); 
print((p^2-1.0)/p^2,"  ",(p^2-1.0)/p^4,"  ",(p^2-1.0)/p^6,"  ",S)}
\end{verbatim}

\normalsize\smallskip
Pour $p=3$ et $B= 3\cdot 10^5$ on obtient les valeurs num\'eriques
$C_M= 18928$, $C_1= 16857$, $C_2= 1814$, $C_3= 221$, $C_4= 36$ et le tableau suivant~:
\footnotesize
$$\begin{array}{cccc}
& \hbox{proportions} & & \hbox{probabilit\'es}   \vspace{0.1cm} \\ 
& \frac{C_1}{C_M} = 0.8905853761
 &\hspace{1cm} \frac{8}{9} &\hspace{-0.3cm} = 0.8888888888 \\
& \frac{C_2}{C_M} = 0.0958368554
&\hspace{1cm}  \frac{8}{9^2} &\hspace{-0.3cm}= 0.0987654320\\
& \frac{C_3}{C_M} =  0.0116758241
&\hspace{1cm} \frac{8}{9^3} & \hspace{-0.3cm}= 0.0109739368 \\
& \frac{C_4}{C_M} =  0.0019019442
&  \sum_{j\geq 0} \frac{8}{9^{4+j}} &\hspace{-0.3cm} = 0.0013717421
\end{array} $$
\normalsize

\begin{remark}\label{nature}
La diff\'erence de nature entre d'une part les r\'esultats num\'eriques obtenus ici sur le 
comportement ``fictif'' des $\delta_p(\alpha_i)$ d\'eduit du calcul effectif 
de $\order \Cl_{k_n}$, et d'autre part les exp\'erimentations du \S\,\ref{L} sur la 
repr\'esentation des classes par des id\'eaux premiers ${\mathfrak L}$ 
d\'ecompos\'es dans $k_n/\Q$, provient des faits suivants~:

\smallskip
Dans l'\'ecriture $\varepsilon =  {\rm N}_{{k_n}/k}(y_1)$
qui conduit \`a $(y_1) = {\mathfrak A}_1^{1-\sigma}$, comme ${\mathfrak A}_1$
est d\'efini modulo les id\'eaux invariants, l'algorithme (non unique) reste valable si l'on prend
${\mathfrak A}_1 = {\mathfrak A}'_1\cdot {\mathfrak P}_1$, o\`u ${\mathfrak A}'_1$
est \'etranger \`a $p$ et o\`u ${\mathfrak P}_1 \in \langle S_{k_n} \rangle$ est arbitraire~; 
on a alors ${\rm N}_{{k_n}/k}({\mathfrak A}_1) = {\rm N}_{{k_n}/k}({\mathfrak A'}_1) \cdot
{\rm N}_{{k_n}/k}({\mathfrak P}_1) = (\alpha_1)$ avec $\alpha_1 = \alpha'_1\cdot \eta_1$,
o\`u $\eta_1$ est une $S_k$-unit\'e arbitraire.

\smallskip
Or $\ds \order \big( M^n_2/M^n_1 \big) = \frac{p^n}{(\Lambda_1 : \Lambda_1 
\cap {\rm N}_{{k_n}/k}({k_n}^\times))}$, o\`u $\Lambda_1 = \langle \varepsilon, \alpha_1 \rangle$,
et la condition $M^n=M^n_1$ a lieu si et seulement si $M^n_2 = M^n_1$, soit
$\delta_p(\alpha_1)=0$~; si la $S_k$-unit\'e $\eta_1$
{\it est non norme dans $k_n/k$}, quel que soit $\alpha'_1$
on peut faire en sorte que $\alpha_1=\alpha'_1 \cdot \eta_1$ soit norme, 
auquel cas on a au contraire $M^n_2 \ne M^n_1$.

\smallskip
Autrement dit, cette statistique, d\'enombrant les $k$ tels que $M^n = M^n_{m_n}$,
$m_n \geq 1$, par l'algorithme utilisant des id\'eaux \'etrangers \`a $p$, \'elimine 
ceux dont les $S_k$-unit\'es sont normes~; ceci se propage pour 
chaque $M^n_i$, mais on \'evite ainsi le calcul des $S_k$-unit\'es
non unit\'es et de leurs symboles de Hasse.

\smallskip
On montre, pour $n=1$, que l'on passe d'une statistique \`a l'autre en multipliant 
par $\frac{p+1}{p^r}$, $r \geq 1$, celle relative aux id\'eaux premiers ${\mathfrak L}$
 totalement d\'ecompos\'es dans $k_1/\Q$~; 
les vraies densit\'es sont celles obtenues via la repr\'esentation ``fictive'' des
classes de $k_1$ par les id\'eaux~${\mathfrak L}$
(densit\'e des corps $k$ tels que $\order \Cl_{k_1}= \order \Cl_{k_1}^{G_1} = p$, 
\'egale \`a~$\frac{p-1}{p}$).
\end{remark}

\subsubsection{Calcul de $m_n$ dans le cas 
$\order \Cl_k=p \  \& \ \delta_p(\varepsilon)=0$, $k$ quadratique}
Lorsque $\delta_p(\varepsilon)=0$, on a
$\omega_n(\varepsilon)$ d'ordre $p^n$, d'o\`u $\omega_n(\Lambda_i^n)=p^n$ 
pour tout $i$ et tout $n$, et on obtient~:
$$\order \big(M^n_{i+1}/M^n_{i} \big) = \ds
\frac{\order \Cl_k}{\order {\rm N}_{k_n/k}( M^n_i)} = \frac{p}{\order {\rm N}_{k_n/k}( M^n_i)}. $$

On se limite au cas $p=3$, $n = 1$ et pour des corps $k$ de nombre de classes $3$.
On a  $\order \Cl_{k_1} = p^w$, $w \geq 1$, puisque $\order \Cl_k=p$, et
le groupe des classes ambiges, d'ordre $p$, est engendr\'e par les classes des id\'eaux
invariants (cf. Relation \eqref{amb}), donc par la classe de $({\mathfrak a})$ (\'etendu \`a $k_1$ 
de ${\mathfrak a}$ tel que $\cl_k({\mathfrak a}) \ne 1$), et celle de
${\mathfrak p}_1 \mid {\mathfrak p}$ dans $k_1$.

\smallskip
On a alors ${\rm N}_{k_1/k}(M^1_1) = \langle \cl_k({\mathfrak a}^p), \cl_k({\mathfrak p}) \rangle
= \langle \cl_k({\mathfrak p}) \rangle$ d'ordre $p$ (i.e., $w=1$) si et seulement si ${\mathfrak p}$ 
est non principal.

\smallskip
Le programme fournit $C_r := \order \{k = \Q(\sqrt m), \ w=r\}$, pour les $\Q(\sqrt m)$ 
v\'erifiant les hypoth\`eses pr\'ec\'edentes et pour un intervalle donn\'e pour $m$.
On pourrait aussi prendre des $m$ al\'eatoires.
\footnotesize
\begin{verbatim}
{p=3; n=1; b=10^3; B=10^6; Cyclo=polsubcyclo(p^(n+1),p^n); 
C1=0; C2=0; C3=0; C4=0; Cm=0.0; m=b; 
while(m<B, m=m+1; if(core(m)==m & kronecker(m,p)==1, Q=x^2-m; 
M=m; t=Mod(m,4); if(t!=1, M=4*m); h=qfbclassno(M); 
if(valuation(h,p)==1, E=quadunit(M); 
e1=component(E,2); e2=component(E,3); 
if(t==1, e2=e2/2; e1=e1+e2); E=Mod(e1+e2*x, Q); 
E=component(E^(p-1)-1,2); 
ve=valuation(E,p)-1; if(ve==0, Cm=Cm+1; 
P=polcompositum(Cyclo,Q); 
R=component(P,1); K=bnfinit(R,1); H=bnrinit(K,1); 
H=component(component(H,5),1); w=valuation(H,p); 
if(w==1, C1=C1+1); if(w==2, C2=C2+1); 
if(w==3, C3=C3+1); if(w>=4, C4=C4+1))))); 
print(C1/Cm,"  ",C2/Cm,"  ",C3/Cm,"  ",C4/Cm)}
\end{verbatim}

\normalsize
On obtient les r\'esultats num\'eriques suivants pour $p=3$, $10^3< m <10^6$:
$C_M= 8508$, $C_1= 6362$, $C_2= 1944$, $C_3= 178$, $C_4=24$, et le tableau~:
\footnotesize
$$\begin{array}{cccc}
& \hbox{proportions} & & \hbox{probabilit\'es} \vspace{0.1cm} \\ 
& \frac{C_1}{C_M} = 0.7477668077
 &\hspace{1cm} \frac{8}{11} & \hspace{-0.3cm} = 0.72727272 \\
& \frac{C_2}{C_M} = 0.2284908321
&\hspace{1cm} \frac{8}{3.11} &\hspace{-0.3cm} = 0.24242424\\
& \frac{C_3}{C_M} = 0.0209214856
&\hspace{1cm} \frac{8}{3^3. 11} & \hspace{-0.3cm}= 0.02693602 \\
& \frac{C_4}{C_M} = 0.0028208744
& \frac{8}{11}\sum_{j\geq 0} \frac{p-1}{p^{5+2 j}} & \hspace{-0.3cm}= 0.00336702
\end{array} $$
\normalsize

La probabilit\'e pour ${\mathfrak p} \mid p$ d'\^etre non principal (i.e., $r=1$) est 
difficile \`a \'etablir car contrairement aux heuristiques g\'en\'erales,
$\cl_k({\mathfrak p})$ n'est pas ``al\'eatoire''.
Un programme ind\'ependant testant la principalit\'e de ${\mathfrak p}$ 
(sans hypoth\`ese sur $\varepsilon$ mais pour des $m$ beaucoup 
plus grands que dans le programme ci-dessus) donne une proportion de l'ordre de 
$O(1)\cdot \frac{p^2-1}{p^2+p-1}$. Ensuite on aurait des probabilit\'es en
$O(1)\cdot \frac{p^2-1}{p^2+p-1}\cdot \frac{1}{p^{2 \cdot (r-1)-1}}$ pour tout 
$r\geq 2$.

\section{Conclusion} \label{conclusion}\label{sect9}
Cette \'etude (qui a, au moins pour la premi\`ere partie, des points g\'en\'eralisant
les approches de \cite{F1}, \cite{FK}, \cite{FT1}, \cite{FT2}, 
\cite{Gre}, \cite{I}, \cite{IS1}, \cite{IS2}, \cite{Su}, \cite{T1}, \cite{T2}) montre que 
la question de la conjecture de Greenberg (pour les corps totalement r\'eels) est 
essentiellement $p$-adique et sans doute moins ``th\'eorie d'Iwasawa alg\'ebrique'' 
qu'admis g\'en\'eralement. 

\smallskip
En outre, il s'agit probablement d'une question li\'ee au seul invariant $\lambda$ 
dans la mesure o\`u la nullit\'e de $\mu$ peut \^etre admise plus facilement 
pour les corps totalement r\'eels (voire d\'emontr\'ee, comme dans le cas Ab\'elien) 
car, d'apr\`es l'\'etude de \cite{HM} dans un cadre tr\`es g\'en\'eral~:

\smallskip
{\it Both in Iwasawa's work, and in the present one, the size of the 
$\mu$-invariant appears to be intimately related to the existence 
of primes that split completely in the tower. }

\smallskip
Le cas $\mu \ne 0$ serait donc li\'e \`a l'existence d'id\'eaux premiers totalement 
d\'ecompos\'es dans la $\Z_p$-extension de $k$ consid\'er\'ee
(premiers exemples de telles $\Z_p$-extensions d\^us, entre autres, \`a Iwasawa \cite{Iw}, 
Serre \cite[\S\,4.5]{Se3}, Cuoco \cite{Cu},  Hubbard--Washington \cite{HW}), ce qui est 
\'evidemment impossible pour la $\Z_p$-extension cyclotomique 
(cf. Section \ref{L} sur l'utilisation des id\'eaux premiers pour 
repr\'esenter les classes dans $k_\infty$).

\smallskip
Le cas des corps $k$ Ab\'eliens r\'eels peut \^etre de nature plus particuli\`ere 
en raison des ``formules analytiques $p$-adiques'' du nombre de classes introduisant 
les unit\'es cyclotomiques et conduisant, avec des hypoth\`eses ``ad hoc'', 
\`a de nombreux travaux sp\'ecifiques 
(voir \cite{F2}, \cite{BaN}, \cite{BeN}, \cite{KS}, \cite{Ng1}, \cite{Ng2} 
et leurs r\'ef\'erences), bien que nous pensions que la version fonctions $L$ 
$p$-adiques classique (au sens de \cite{Sin}, \cite{W}) ne soit 
qu'une traduction analytique de l'aspect ``modules sur l'alg\`ebre d'Iwasawa'', et 
n'apporte pas d'information suppl\'ementaire au niveau ``diophantien $p$-adique''. 

\smallskip
Si le passage \`a la limite (alg\'ebrique ou analytique 
$p$-adique) est au demeurant plus concis et structurant que les calculs 
aux niveaux finis, ceux-ci sont n\'ecessaires pour localiser les profonds 
ph\'enom\`enes arithm\'etiques sous-jacents.
En effet, du seul point de vue th\'eorie d'Iwasawa, le probl\`eme porte sur la d\'etermination
du quotient de Herbrand~:
$$q({\mathcal X}_k) := \order  ({\mathcal X}_k^G)/ \order ({}^{G}{\!\mathcal X}_k)$$

de ${\mathcal X}_k :=\ds \varprojlim_{n\to\infty} \Cl_{k_n}$ (pour la norme arithm\'etique),
o\`u ${\mathcal X}_k^G$ (resp. ${}^{G}{\!\mathcal X}_k$) est le noyau (resp. le conoyau)
de l'op\'eration de $1-\sigma$ sur ${\mathcal X}_k$, o\`u $\sigma$ est un g\'en\'erateur 
topologique de $G := {\rm Gal}(k_\infty/k)$ (voir \cite[\S\,1.1]{J6} et \cite[\S\,3]{Ng3}
pour quelques rappels sous des approches diff\'erentes). La pseudo-nullit\'e de
${\mathcal X}_k$ (i.e., $\lambda=\mu=0$) est \'equivalente \`a $q({\mathcal X}_k)=1$.

\smallskip
Si la d\'etermination de ${}^{G}{\!\mathcal X}_k$ \'equivaut grosso modo aux r\'esultats 
du \S\,\ref{torsion}, c'est-\`a-dire \`a la th\'eorie du corps de classes global,
la d\'etermination de ${\mathcal X}_k^G$ semble non triviale et 
proba\-blement li\'ee aux consid\'erations $p$-adiques pr\'ec\'edentes o\`u l'on rencontre 
manifestement des probl\`emes de type ``quotients de Fermat'' de 
nombres alg\'ebriques dont les heuristiques impliquent que les conjectures 
\'enonc\'ees sont tr\`es raisonnables. 

\medskip
Pour conclure, nous nous proposons de faire quelques remarques sur les groupes 
${\mathcal T}_{k_n}$ sous la conjecture de Leopoldt pour $p$ dans $k_\infty$, 
quelle que soit la d\'ecomposition de $p$ dans le corps totalement r\'eel $k$, 
ce qui nous para\^it plus canonique en raison de la sp\'ecificit\'e de ces 
groupes de torsion associ\'es \`a la $p$-ramification Ab\'elienne et plus 
susceptibles d'une approche essentiellement $p$-adique de la 
conjecture de Greenberg pour laquelle il serait utile de tester 
num\'eriquement l'Hypoth\`ese \ref{hypo}.

\medskip
Pour tout $n \gg 0$, on a encore $\order  {\mathcal T}_{k_n} = 
\order  \Cl_{k_n} \cdot \,\order  (U_{k_n}^* / \overline E_{k_n})$,
avec des notations analogues \`a celles du \S\,\ref{torsion}, o\`u
les $\Z_p$-modules $U_{k_n}^*$ et $\overline E_{k_n}$ 
sont $\Z_p$-libres de $\Z_p$-rangs $d \cdot p^n -1$.

\smallskip
On peut \'egalement noter $R_{k_n} := \order (U_{k_n}^*/\overline E_{k_n})$
le r\'egulateur convenablement normalis\'e de $k_n$.

\smallskip
Il resterait \`a \'etudier la formule d'Iwasawa $\order  {\mathcal T}_{k_{n}} =: 
p^{\wt \lambda \cdot n + \wt \nu}$ (en supposant pour simplifier
que $\wt \mu = 0$) telle que
$p^{\wt \lambda} =\ds \frac{\order  {\mathcal T}_{k_{n+1}}}{\order  {\mathcal T}_{k_{n}}}$
pour tout $n \gg 0$~; sous la conjecture de Greenberg pour $k$, on a 
$\order  \Cl_{k_{n+1}} = \order  \Cl_{k_{n}} = p^{\nu}$, pour tout $n \gg 0$, auquel cas 
$p^{\wt \lambda} = \ds \frac{R_{k_{n+1}}}{R_{k_{n}}}$.

On a $\wt \lambda = 0$ si et seulement si ${\mathcal T}_k = 1$ 
(i.e., $k$ est $p$-rationnel) pour les raisons suivantes~:
les ``transferts'' $j_{k_{n+h}/k_n}^{} : {\mathcal T}_{k_n} \too 
{\mathcal T}_{k_{n+h}}$ sont injectifs pour tous $n, h$ 
(en raison de la conjecture de Leopoldt pour $p$ dans la tour) \cite[Theorem IV.2.1]{Gra1}~;
on a en particulier la formule de points fixes ($k_{n+h}/k_n$ \'etant trivialement 
{\it $p$-primitivement ramifi\'ee}, \cite[Theorem IV.3.3]{Gra1}),
${\mathcal T}_{k_{n+h}}^{{\rm Gal}(k_{n+h}/k_n) } =
j_{k_{n+h}/k_n}^{}({\mathcal T}_{k_n}) \simeq {\mathcal T}_{k_n}$.

Consid\'erons alors $\nu_{k_{n+1}/k_n}^{} := 
j_{k_{n+1}/k_n}^{} \circ {\rm N}_{k_{n+1}/k_n}$~; si $\wt \lambda = 0$, on a
${\mathcal T}_{k_{n+1}} = j_{k_{n+1}/k_n}^{} ({\mathcal T}_{k_n})$ et
puisque l'on a ${\rm N}_{k_{n+1}/k_n} \circ   j_{k_{n+1}/k_n}^{} = p$, 
on obtient, \`a partir de l'\'egalit\'e pr\'ec\'edente
(la norme arithm\'etique est surjective, cf. Sch\'ema du \S\,\ref{resglob}), 
${\mathcal T}_{k_n} = {\mathcal T}_{k_n}^{p}$,
d'o\`u ${\mathcal T}_{k_n} = 1$, et comme ${\mathcal T}_{k_n}^{G_n} 
\simeq {\mathcal T}_{k}$, on a ${\mathcal T}_{k}=1$. R\'eciproque \'evidente.
On notera que $\wt \lambda = 0$ implique $\wt \nu = 0$.

\smallskip
Ainsi $\wt \lambda = 0$ est \'equivalent \`a $\Cl_{k_n} = R_{k_n}=1$ pour tout $n$
($p$-rationalit\'e dans la tour), ce qui peut sugg\'erer que $\wt \lambda$ 
est davantage accessible puisque $\order {\mathcal T}_{k_n}$ est essentiellement donn\'e
par le r\'esidu de la fonction z\^eta $p$-adique de $k_n$ (cf. \cite{Co}, \cite{Se2}, \cite{W}), 
ce qui permet, comme dans \cite{CN}, \cite{IS1}, \cite{IS2}, d'en d\'eduire des cas de nullit\'e de 
$\lambda$ lorsque $\wt \lambda \geq 1$.

On remarque que $\ds \frac{\order {\mathcal T}_{k_n}}{\order {\mathcal T}_{k}} =
\order ({\mathcal T}_{k_n} / {\mathcal T}_{k_n}^{G_n})$ et que le calcul
de $ ({\mathcal T}_{k_n} / {\mathcal T}_{k_n}^{G_n})^{G_n}$
serait le second pas de l'algorithme d\'efinissant la filtration habituelle 
dans le cadre diff\'erent de la $p$-ramification Ab\'elienne qui, \`a notre 
connaissance, n'a pas \'et\'e \'etudi\'e.

\subsection*{Remerciements}
{Je remercie J-F. Jaulent, T. Nguyen Quang Do et C. Maire pour plusieurs 
\'echanges, commentaires et indications (techniques et bibliographiques).}

\end{document}